\newcommand{\OO}{\mathcal{O}}
\newcommand{\image}{\mathrm{im}\,}
\newcommand{\kernel}{\mathrm{ker}\,}
\newcommand{\cokernel}{\mathrm{coker}\,}
\newcommand{\rank}{\mathrm{rk}\,}
\newcommand{\Hom}{\textnormal{Hom}}
\newcommand{\dimension}{\textnormal{dim}\,}
\newcommand{\rk}{\mathrm{rk}}
\newcommand{\CC}{\mathcal{C}}
\newcommand{\Ac}{\mathcal{A}}
\newcommand{\Fc}{\mathcal{F}}
\newcommand{\Tc}{\mathcal{T}}
\newcommand{\Coh}{\mathrm{Coh}}
\newcommand{\arinj}{\ar@{^{(}->}}
\newcommand{\arsurj}{\ar@{->>}}
\newcommand{\areq}{\ar@{=}}
\newcommand{\Bw}{\mathcal B_\omega}
\newcommand{\oZw}{\overline{Z}_\omega}
\newcommand{\bZob}{\overline{Z}_{\omega, B}}
\newcommand{\Fw}{\mathcal F_\omega}
\newcommand{\Tw}{\mathcal T_\omega}
\newcommand{\wh}{\widehat}
\newcommand{\ch}{\mathrm{ch}}
\newcommand{\bo}{{\bar{\omega}}}
\newcommand{\ysize}{\Yboxdim{8pt}}
\newcommand{\scalea}{\scalebox{0.5}}
\newcommand{\Bl}{\mathcal{B}^l}
\newcommand{\Fl}{\mathcal{F}^l}
\newcommand{\Tl}{\mathcal{T}^l}
\newtheorem*{rep@theorem}{\rep@title}
\newcommand{\newreptheorem}[2]{%
\newenvironment{rep#1}[1]{%
 \def\rep@title{#2 \ref{##1}}%
 \begin{rep@theorem}}%
 {\end{rep@theorem}}}
\newtheorem{theorem}{Theorem}[section]
\newtheorem{lemma}[theorem]{Lemma}
\newtheorem{proposition}[theorem]{Proposition}
\theoremstyle{definition}
\theoremstyle{remark}
\newtheorem{remark}[theorem]{Remark}
\numberwithin{equation}{section}
\begin{document}

\title{Fourier-Mukai transforms of slope stable torsion-free sheaves on a product  elliptic threefold}

\author[Jason Lo]{Jason Lo}
\address{Department of Mathematics \\
California State University Northridge\\
18111 Nordhoff Street\\
Northridge CA 91330 \\
USA}
\email{jason.lo@csun.edu}

\keywords{Fourier-Mukai transform, elliptic threefold, slope stability, tilt stability}
\subjclass[2010]{Primary 14D20; Secondary: 18E30, 14J30}

\begin{abstract}
On the product elliptic threefold $X = C \times S$ where $C$ is an elliptic curve and $S$ is a K3 surface of Picard rank 1, we define a notion of limit tilt stability, which satisfies the Harder-Narasimhan property.  We   show that under the Fourier-Mukai transform $\Phi$ on $D^b(X)$ induced by the classical Fourier-Mukai transform on $D^b(C)$, a slope stable torsion-free sheaf satisfying a vanishing condition in codimension 2 (e.g.\ a reflexive sheaf) is taken to a limit tilt stable object.  We also show that a limit tilt semistable object on $X$ is taken by $\Phi$ to a slope semistable sheaf, up to modification by the transform of a codimension 2 sheaf.
\end{abstract}

\maketitle
\tableofcontents

\section{Introduction}

The question of whether stability for sheaves is preserved under a Fourier-Mukai transform is a long-standing question in algebraic geometry.  Answering questions of this type not only helps us understand the relations between moduli spaces (e.g.\ whether they are birational), but also helps us understand the relations among counting invariants associated to these moduli spaces.

Fourier-Mukai transforms are equivalences between derived categories of coherent sheaves on varieties that naturally arise when we study moduli problems for sheaves, particularly on abelian varieties, K3 surfaces, or fibrations of Calabi-Yau type, which include elliptic fibrations, abelian-surface fibrations and K3-surface fibrations.  Given  a Fourier-Mukai transform $\Phi : D^b(X) \overset{\thicksim}{\to} D^b(Y)$  between the derived categories of coherent sheaves on $X$ and $Y$, we can ask the following  concrete question:
\begin{itemize}
\item[] If $E$ is a slope stable sheaf on $X$, is $\Phi E$ a slope stable sheaf on $Y$? If not, what is a natural stability associated to $\Phi E$?
\end{itemize}
Of course, under a Fourier-Mukai transform $\Phi$, a sheaf $E$ on $X$ is not always taken to a sheaf.  Even when a slope stable sheaf $E$ is taken by $\Phi$ to a sheaf, there is no guarantee that $\Phi E$ is slope stable with respect to an arbitrary polarisation on $Y$ \cite{YosNote}.  The second part of the question above, therefore, is where the real question lies.  The above  question has been  discussed in  different contexts, including: slope stability, Gieseker stability and Bridgeland stabiliy on Abelian surfaces \cite{Mac1,YosAS,BriTh,YosPI,YosPII,MacMea}, slope stability and Gieseker stability on K3 surfaces \cite{BBR1, BBR2, Huy1,BruMac}, twisted stability on Abelian and K3 surfaces \cite{YosTw1,YosTw2,MYY}, rank-one torsion-free sheaves on elliptic surfaces \cite{YosPI,YosPII,FMTes}, rank-one torsion-free sheaves on elliptic threefolds \cite{BMef},  torsion sheaves on elliptic threefolds \cite{Dia15},  torsion sheaves on K3-surface fibrations \cite{ARG}, and from the point of view of Postnikov stability  \cite{HP}, just to name a few.  A comprehensive introduction to results of this type can be found in \cite{FMNT}.

In this article, we settle the above question on a product elliptic threefold `up to codimension two' in Theorem \ref{theorem2}.

\subsection{Main results}

Given an elliptic threefold $\pi : X \to S$ that is  Weierstra{\ss}  in the sense of \cite{FMNT} or  relatively minimal in the sense of \cite{BMef}, there always exists a dual elliptic fibration $\hat{\pi} : Y \to S$ where $X$ (resp.\ $Y$) is a moduli space of stable sheaves supported on the fibers of $\hat{\pi}$ (resp.\ $\pi$), and subsequently a Fourier-Mukai transform $\Phi : D^b(X) \to  D^b(Y)$.

On the other hand, on any smooth projective threefold $X$ with a fixed ample divisor $\omega$, we can always consider the notion of tilt stability (or $\nu_\omega$-stability) \cite{BMT1}, which is defined on objects in the Abelian subcategory of $D^b(X)$
\[
  \Bw = \langle \Fw [1], \Tw \rangle
\]
where
\begin{align*}
  \Tw &= \{ E \in \Coh (X): \text{ all $\mu_\omega$-HN factors of $E$ have $\mu_\omega > 0$}\}, \\
  \Fw &= \{ E \in \Coh (X): \text{ all $\mu_\omega$-HN factors of $E$ have $\mu_\omega \leq 0$}\},
\end{align*}
where $\mu_\omega$ is the usual slope function defined by $\mu_\omega (E) = \omega^2 \ch_1/\rk (E)$.

The category $\Bw$ is the heart of a t-structure on $D^b(X)$, and tilt stability on $\Bw$ is a key step in the proposed construction of Bridgeland stability conditions on arbitrary smooth projective threefolds by Bayer-Macr\`{i}-Toda \cite{BMT1}.

In this paper, we focus on the product elliptic threefold $X = C \times S$ where $C$ is an elliptic curve and $S$ is a K3 surface of Picard rank 1, considered as an elliptic fibration over $S$ via the second projection $\pi : X \to S$.  In this case, the Picard rank of $X$ is two, and ample $\mathbb{R}$-divisors on  $X$ are precisely of the form $\omega = tH + sD$ where $t, s>0$ \cite{LZ2}.  We can therefore identify the ample cone of $\mathrm{Amp}(X)_{\mathbb{R}}$ with the first quadrant of the plane $\mathbb{R}^2$.  For any fixed real number $\alpha >0$, we can then consider the branch of the hyperbola $ts = \alpha$ in the first quadrant.  As $s \to \infty$ along this hyperbola, \textit{a priori}, the heart $\Bw$ and hence the notion of $\nu_\omega$-stability will both vary.  However, we can define an appropriate `limit' of $\Bw$, denoted $\Bl$, which itself is  the heart of a t-structure on $D^b(X)$ (see Lemma \ref{lemma3}).  We then define a polynomial stability in the sense of Bayer \cite{BayerPBSC} on $\Bl$ (see Theorem \ref{theorem3}), which we call $\nu^l$-stability (or limit tilt stability).  There is also a Fourier-Mukai transform $\Phi : D^b(X) \to D^b(X)$ induced by the classical Fourier-Mukai transform $D^b(C) \to D^b(C)$ on $C$ \cite[Section 4]{LZ2}.  Putting these constructions together, we prove the main result of this article:

\begin{reptheorem}{theorem2}
Fix any real numbers $\lambda, \alpha >0$.  Let $\bo = \tfrac{\lambda}{\alpha} H + \lambda D$ and $\omega = tH + sD$ where $ts=\alpha$.
\begin{itemize}
\item[(A)] Suppose $E$ is a $\mu_{\bar{\omega}}$-stable torsion-free coherent sheaf on $X$ satisfying
    \begin{equation}
      \Hom (W_{0,X}\cap \Coh^{\leq 1}(X),E[1])=0.
    \end{equation}
    Then $\Phi E [1]$ is a $\nu^l$-stable object in $\Bl$.
\item[(B)] Suppose $F \in \Bl$ is a $\nu^l$-semistable object with $\ch_{10}(F) \neq 0$.  Consider the decomposition of $F$ in $\Bl$ with respect to the torsion triple \eqref{eq19}
    \[
    0 \to F' \to F \to F'' \to 0
    \]
    where $F' \in \langle \Fl [1], W_{0,X}\rangle$ and $F''\in W_{1,X} \cap \Tl$.  Then $\Phi (F')$ is a torsion-free $\mu_{\bo}$-semistable sheaf, while $\Phi (F'')[1]$ is a $\Phi$-WIT$_0$ sheaf in $\Coh^{\leq 1}(X)$.
\end{itemize}
\end{reptheorem}
In the theorem, we write $W_{i,X}$ to denote the category  of coherent sheaves that are $\Phi$-WIT$_i$, i.e.\ that are sent by $\Phi$ to coherent sheaves sitting at degree $i$ in $D^b(X)$.  The categories $\Tl, \Fl$ represent the appropriate limits of $\Tw, \Fw$ as $s \to \infty$ along the positive branch of the hyperbola $ts = \alpha$.  The notation $\ch_{10}$ is the component of the Chern character corresponding to fiber degree with respect to the fibration $\pi$, while $\Coh^{\leq 1}(X)$ is the category of coherent sheaves on $X$ supported in dimension at most 1.

We explain the intuition behind Theorem \ref{theorem2}:
\begin{itemize}
\item Part (A). Any torsion-free reflexive sheaf on a smooth projective threefold $X$ satisfies the vanishing condition in (A) by \cite[Lemma 4.20]{CL}.  On the other hand, any torsion-free sheaf $E$ on $X$ fits in a short exact sequence of coherent sheaves $0 \to E \to E^{\ast \ast} \to T \to 0$ where $E^{\ast \ast}$ is the double dual of $E$, hence reflexive, and $T$ is supported in dimension at most 1.  That is, $E$ fits in the exact triangle
     \[
     T[-1] \to E \to E^{\ast \ast} \to T
     \]
     in $D^b(X)$.  As a result, we can interpret part (A) as:  any slope stable torsion-free sheaf $E$ is sent by $\Phi$ to a limit tilt stable object in $\Bl$, if we modify $E$ in codimension 2.
\item Part (B).  The condition $\ch_{10}(F) \neq 0$ ensures that the transform of $F$ is (a coherent sheaf) of nonzero rank.  Since the component $F''$ of $F$ is sent by $\Phi$ into $\Coh^{\leq 1}(X)$, we can interpret part (B) as: any limit tilt semistable object $F$ in $\Bl$ with nonzero $\ch_{10}$ is sent by $\Phi$ to a slope semistable torsion-free sheaf, if we modify $\Phi F$ in codimension 2.
\end{itemize}
In summary,
\[
\xymatrix{
  \text{ slope stable } \ar@{=>}[rrr]^{\text{modify in codim. 2}}_{\text{then apply $\Phi$}} & & & \text{ limit tilt stable } \ar@{=>}[d] \\
  \text{ slope semistable } &&& \ar@{=>}[lll]^{\text{then modify in codim. 2}}_{\text{apply $\Phi$}} \text{ limit tilt semistable}
}
\]

The comparison of slope stability and limit tilt stability via the Fourier-Mukai functor $\Phi$ is reminiscent of the comparison between slope stability and Gieseker stability for coherent sheaves on a threefold:
\[
\xymatrix{
  \text{ slope stable } \ar@{=>}[r] &  \text{ Gieseker stable } \ar@{=>}[d] \\
  \text{ slope semistable } & \ar@{=>}[l] \text{ Gieseker semistable}
}
\]

\subsection{Idea of proofs of main results}

The key idea behind the proof of Theorem \ref{theorem2} is that, as $s \to \infty$ along the positive branch of the hyperbola $ts = \alpha$, the Chern components in the tilt function $\nu_\omega$ (where $\omega = tH + sD$) are dominated by terms that correspond, via the Fourier-Mukai transform $\Phi$, to the Chern components defining the slope function $\mu_{\bo}$.

On the other hand, to establish that limit tilt stability satisfies the Harder-Narasimhan (HN) property, we first construct a multitude of t-structures on $D^b(X)$ by identifying many torsion classes in the noetherian abelian category $\Coh (X)$.  Then, by taking the intersections and extension closures of the various torsion classes and torsion-free classes, we form a 4-step filtration of the heart $\Bl$, i.e.\ we give a torsion quadruple \eqref{eq35} in $\Bl$.  We then show that this 4-step filtration in $\Bl$ refines to an HN filtration with respect to $\nu^l$-semistability.

\subsection{Product elliptic threefold vs  general  Weierstra{\ss} threefold}

Many of the constructions in this article carry over directly to a general Weierstra{\ss} elliptic threefold, or with slight modifications.  For example, the definitions of the torsion classes in \eqref{eq46}, as they are, make sense on an arbitrary elliptic threefold.  A major reason why we chose to consider the product elliptic threefold in this article is, that the formula for the cohomological Fourier-Mukai transform is very simple (see Section \ref{section-matrixchern}) - Chern classes can be represented by 2 by 3 matrices, and the cohomological Fourier-Mukai simply swaps the two rows and then changes the signs in the second row.  On a general Weierstra{\ss} elliptic threefold $\pi : X \to S$, there would be contribution to the cohomological Fourier-Mukai from the canonical class of the base $S$ of the fibration, and so many computations may not be as clean as in the product case.

\subsection{Relations to other problems}

\subsubsection{Counting invariants}

In this article, we establish equivalences between various categories of coherent sheaves under the Fourier-Mukai transform.  When suitable notions of stability are paired with these categories, they can potentially help us understand various counting invariants better.  For instance, in the work of Oberdieck-Shen \cite{OS1}, in which they give a partial proof of the modularity conjecture on PT invariants due to Huang-Katz-Klemm, they study the transform of stable pairs in the sense of Pandharipande-Thomas under an autoequivalence.  This involves understanding the Fourier-Mukai transforms of rank-one torsion-free sheaves (which are, of course, slope-stable), while our Theorem \ref{theorem2}(A) describes the transforms of slope-stable torsion-free sheaves of any rank satisfying the vanishing condition \eqref{eq53}.

\subsubsection{Walls and moduli spaces}

In Section \ref{section-tiltvslimittilt}, we show that if an object $E \in D^b(X)$ lies in the heart $\Bw$ and is tilt stable for $s \gg 0$ along the hyperbola $ts = \alpha$, then $E$ is limit tilt stable.  A natural follow-up question is: given a limit tilt stable object $E \in \Bl$, can we find a fixed $s_0>0$ such that $E \in \Bw$ and $E$ is $\nu_\omega$-stable for all $s>s_0$ with $ts = \alpha$? (Note that we already know $E \in \Bw$ for $s>s_0$ for some $s_0>0$ from Remark \ref{remark14}(vi).)  In other words, we would like to know whether the `mini-walls' for tilt stability along the hyperbola $ts = \alpha$ is bounded from the right-hand side.  This is similar to the question considered in the author's joint work with Qin \cite{LQ}, in which we show the local finiteness and boundedness of mini-walls for Bridgeland stability on rays on surfaces.  (Bridgeland stability on surfaces are analogous to tilt stability on threefolds in some manners.)

If we can answer the question above and also show that $s_0$ depends only on the Chern classes of $E$, then together with Theorem \ref{theorem2}, we will obtain isomorphisms between moduli spaces of slope stable sheaves and moduli spaces of tilt stable objects - via the Fourier-Mukai transforms $\Phi$.  This will be in line with the observations in various other articles, where tilt stability of slope stable sheaves has been shown:   on arbitrary smooth projective threefolds (see \cite[Section 7.2]{BMT1} and \cite[Corollary 3.11, Example 4.4]{BMS}), and on $\mathbb{P}^3$ (see \cite{Sun} and \cite[Lemma 3.5]{Schmidt}).

In Lemma \ref{lemma31}, we at least show that $E \in \Bw$ for $s>s_0$, for some $s_0$ depending only on the Chern classes of $E$.

\subsubsection{Existence of Bridgeland stability}

In Maciocia-Piyaratne's approach to proving the existence of Bridgeland stability on Abelian threefolds $X$ of Picard rank 1 \cite{MacPir1, MacPir2}, a major part of the work lay in computing cohomology with respect to various t-structures.  For instance, given the heart $\Bw$ of a t-structure, it was necessary to compute the cohomology of objects in   $\Phi (\Bw)$ with respect to various t-structures, where  $\Phi : D^b(X) \to D^b(X)$ is a Fourier-Mukai transform on $D^b(X)$.  If one is to carry out a similar construction on elliptic threefolds, understanding the Fourier-Mukai transforms of slope-stable sheaves would constitute the first step.

\subsection{Outline of the paper}

In Section \ref{sec-generation}, we introduce a nested sequence of torsion classes in $\Coh (X)$ that filter $\Coh (X)$ itself, and introduce a notation for describing these categories that is particularly suited to the Fourier-Mukai transform $\Phi$.  In Section \ref{section-limittiltstab}, we construct the limit $\Bl$ of the hearts $\Bw$ as $\omega$ moves along a hyperbola in the first quadrant, and define the notion of limit tilt stability.  We also compute the phases of various objects with respect to the Laurent-polynomial-valued phase function of limit tilt stability (see Table \ref{table1}).  In Section \ref{section-slopevslimittilt}, we prove Theorem \ref{theorem2}.  Then, in Section \ref{section-HNlimittilt}, we establish the Harder-Narasimhan property of limit tilt stability.  And finally, in Section \ref{section-tiltvslimittilt}, we briefly discuss a connection between tilt stability and limit tilt stability.

\subsection{Acknowledgements}
The author would like to thank Ziyu Zhang, Wanmin Liu and Zhenbo Qin for many valuable discussions.  He would also like to thank the Center for Geometry and Physics of the Institute for Basic Science in Pohang, South Korea, for their hospitality during the author's stay in May 2016, where part of this work was completed.

\section{Preliminaries}

Throughout this paper, we will write $X$ to denote the product elliptic threefold $X = C \times S$, where $C$ is an elliptic curve and $S$ is a K3 surface of Picard rank 1.  We will regard $X$ as a trivial elliptic fibration via the second projection $\pi: X \to S$, and  assume  $\mathrm{Pic}(S)=\mathbb{Z}[H_S]$ where $H_S$ is an ample class on $S$ with  $H_S^2 = 2h$.

We will write $\Phi$ to denote the relative Fourier-Mukai transform $D^b(X) \overset{\thicksim}{\to} D^b(X)$ constructed in the author's joint work with Zhang \cite[(4.3)]{LZ2}.  Over any closed point $s \in S$, the relative Fourier-Mukai transform $\Phi_s : D^b(X_s) \overset{\thicksim}{\to} D^b(X_s)$ is the classical Fourier-Mukai transform on $D^b(X_s) \cong D^b(C)$.  In particular, for any closed point $x \in X$, $\Phi$ takes the structure sheaf $\mathcal{O}_x$ of $x$ to the degree zero line bundle on the fiber $X_{\pi (x)}$ parametrised by $x$.

If $\Ac$ is the heart of a t-structure on $D^b(X)$, then $\mathcal H^i_{\Ac}$ will denote the $i$-th cohomology functor with respect to this t-structure.  We will simply write $H^i$ to denote $\mathcal H^i$ when the t-structure is the standard t-structure.

We have the identity $\Phi^2 = \mathrm{id}_X [-1]$.  For any $E \in D^b(X)$, $\Phi^i (E)$ will denote $H^i (\Phi (E))$, the degree-$i$ cohomology of $\Phi (E)$ with respect to the standard t-structure.  For any coherent sheaf $E$ on $X$, we have $\Phi^i (E)=0$ for all $i \neq 0,1$.  A complex $E \in D^b(X)$ will be called $\Phi$-WIT$_i$ if $\Phi^j(E)=0$ for all $j \neq i$, i.e.\ if $\Phi (E) \cong \wh{E} [-i]$ for some coherent sheaf $\wh{E}$.  We will refer to $\wh{E}$ as the transform of $E$.

See \cite[Section 6.1]{LZ2} for properties of relative integral functors in general, and \cite[Section 4.1]{LZ2} for details of the construction of $\Phi$.

\subsection{Matrix notation for Chern characters}\label{section-matrixchern}  For convenience, we will follow the matrix notation in \cite[Section 4.2]{LZ2} for writing Chern characters of objects in $D^b(X)$.  If we write $e_i, f_j$ to denote the effective generators of $A^i(C), A^j(S)$ for $0 \leq i \leq 1, 0 \leq j \leq 2$, then $A^\ast (X)$ has the $d_{ij} := e_i \otimes f_j$ as an integral basis.  It follows that for any object $E \in D^b(X)$, we can write
\[
  \ch (E)  = \sum_{0\leq i \leq 1,\, 0 \leq j \leq 2} a_{ij} d_{ij} \text{\quad for some integers $a_{ij}$},
\]
and we abbreviate this as
\[
  \ch (E) = (a_{ij}) = \begin{pmatrix} a_{00} & a_{01} & a_{02} \\
  a_{10} & a_{11} & a_{12} \end{pmatrix}.
\]
That is,
\begin{align*}
  \ch_0(E) &= a_{00}, \\
  \ch_1(E) &= a_{10}d_{10} + a_{01}d_{01}, \\
  \ch_2(E) &= a_{11}d_{11} + a_{02}d_{02}, \\
  \ch_3(E) &= a_{12}.
\end{align*}
This matrix notation has the advantage that the cohomological Fourier-Mukai transform can be represented by matrix multiplication.  More concretely, for any $E \in D^b(X)$ with $\ch(E)=(a_{ij})$ we have
\[
  \ch (\Phi E) = \begin{pmatrix} a_{10} & a_{11} & a_{12} \\
  -a_{00} & -a_{01} & -a_{02} \end{pmatrix}
  = \begin{pmatrix} 0 & 1 \\
  -1 & 0 \end{pmatrix} \begin{pmatrix} a_{00} & a_{01} & a_{02} \\
  a_{10} & a_{11} & a_{12} \end{pmatrix}.
\]

 The intersection products between the $d_{ij}$ can be summarised in the multiplication table
\begin{center}
\begin{tabular}{| c | c | c | c | c |}
 \hline
 & $d_{01}$ & $d_{10}$ & $d_{02}$ & $d_{11}$ \\
 \hline
 $d_{01}$ & $2h$ & 1 & 0 & $2h$ \\
 \hline
 $d_{10}$ & & 0 & 1 & 0 \\
 \hline
 $d_{02}$ & & & 0 & 0 \\
 \hline
 $d_{11}$ & & & & 0 \\
 \hline
\end{tabular}
\end{center}
The entries refer to the coefficients of the appropriate generator.  That is, if the entry for the product $d_{ij}d_{i'j'}$ is $c$,  it means that $d_{ij}d_{i'j'}=cd_{i''j''}$ where $i''=i+i'$ and $j'' = j+j'$.

We will often write
\[
  H = d_{10}, \text{\qquad} D = d_{01}
\]
and write
\[
  \omega = tH + sD
\]
to denote the polarisation we fix on $X$, where $t, s>0$ are real numbers.  With $\ch (E)$ as above, we have the conversions
\begin{gather*}
  a_{00}=\ch_{00}(E)=\ch_0(E), \\
   2ha_{01}=HD\ch_{01}(E), \,\, a_{02}=H\ch_{02}(E), \\
  2ha_{10}= D^2\ch_{10}(E), \,\, 2ha_{11}=D\ch_{11}(E),\\
  a_{12}=\ch_{12}(E).
\end{gather*}
We will sometimes abuse notation and write $\ch_{ij} \geq 0$ (resp.\ $\ch_{ij} \leq 0$) to mean $a_{ij} \geq 0$ (resp.\ $a_{ij} \leq 0$).

The following are intersection products that we will need:

\begin{align*}
  \omega^2 &= 2tsd_{11}+2hs^2 d_{02} \\
  \omega^3 &= 6hts^2,
\end{align*}
and
\begin{align*}
  \omega^2 \ch_1(E) &= (2tsd_{11}+2hs^2d_{02})(a_{10}d_{10}+a_{01}d_{01}) \\
  &= 4htsa_{01} + 2hs^2a_{10} \\
  \omega \ch_2(E) &= (td_{10} + sd_{01})(a_{11}d_{11}+a_{02}d_{02})\\
  &= ta_{02} + 2hsa_{11} \\
  \ch_1(E)^2 &= (a_{10}d_{10}+a_{01}d_{01})^2 = 2a_{10}a_{01}d_{11} + 2ha_{01}^2d_{02} \\
  \omega \ch_1(E)^2 &= (td_{10} + sd_{01})(2a_{10}a_{01}d_{11} + 2ha_{01}^2d_{02})\\
  &= 2hta_{01}^2 + 4hsa_{01}a_{10}.
\end{align*}

\subsection{Slope-like functions}  Any time we have a noetherian abelian category $\Ac$ together with a pair of group homomorphisms $C_0 : K(\Ac) \to \mathbb{Z}$ and $C_1 : K(\Ac) \to \mathbb{R}$ satisfying the positivity properties
\begin{itemize}
\item $C_0(E) \geq 0$ for any $E \in \Ac$;
\item if $E \in \Ac$ satisfies $C_0(E)=0$, then $C_1 (E) \geq 0$,
\end{itemize}
we can define a function $\mu : K (\Ac) \to \mathbb{R} \cup \{\infty\}$ by setting
\[
  \mu (E) = \begin{cases} \frac{C_1(E)}{C_0(E)} &\text{ if $C_0(E) \neq 0$} \\
  +\infty &\text{ if $C_0(E)$=0} \end{cases}.
\]
We will refer to any such function $\mu$ as a slope-like function  (see \cite[Section 3.2]{LZ2}).  It gives a notion of semistability for objects in $\Ac$: an object $E \in \Ac$ is called $\mu$-semistable if $\mu (F) \leq \mu (E)$ for every nonzero proper subobject $F \subset E$ in $\Ac$.  In particular, $\mu$ satisfies the Harder-Narasimhan (HN) property, i.e.\ every object $E \in \Ac$ has a filtration
\[
  E_0 \subset E_1 \subset \cdots \subset E_m = E
\]
in $\Ac$ where each $E_i/E_{i-1}$ is $\mu$-semistable, and $\mu (E_0) > \mu (E_1/E_0) > \cdots > \mu (E_m/E_{m-1})$ \cite[Proposition 3.4]{LZ2}. We write $\mu_{max}(E)$ (resp.\ $\mu_{min}(E)$) to denote $\mu (E_0)$ (resp.\ $\mu (E_m/E_{m-1})$).

For any coherent sheaf $E$ on $X$ with $\ch_0(E)=0$, we have $\ch_{10}(E) \geq 0$ as well as $\ch_{01}(E) \geq 0$ \cite[Lemma 5.2]{LZ2}.  Hence we have the following slope-like functions on $\Coh (X)$
\begin{align*}
  \mu_f (E) &= \begin{cases} \frac{a_{10}}{a_{00}} &\text{ if $a_{00} \neq 0$}\\
  +\infty &\text{ if $a_{00} = 0$} \end{cases}, \\
    \mu^\ast (E) &= \begin{cases} \frac{a_{01}}{a_{00}} &\text{ if $a _{00} \neq 0$} \\ +\infty &\text{ if $a_{00}=0$} \end{cases},
\end{align*}
The classical slope function with respect to a fixed polarisation $\omega$
\[
  \mu_\omega (E) = \begin{cases} \frac{\omega^2 \ch_1(E)}{\ch_0(E)} &\text{ if $\ch_0(E) \neq 0$}\\
  +\infty &\text{ if $\ch_0(E) = 0$} \end{cases}
\]
is also a slope-like function.  When $\ch_0(E)=a_{00} \neq 0$, we have
\begin{equation}\label{eq6}
  \mu_\omega (E) = \frac{4htsa_{01} + 2hs^2a_{10}}{a_{00}} = 4hts\mu^\ast (E) + 2hs^2\mu_f (E).
\end{equation}

\subsection{Torsion $n$-tuples}  Given an abelian category $\Ac$, a torsion pair $(\mathcal T, \mathcal F)$ in $\Ac$ is a pair of full subcategories such that
\begin{itemize}
\item $\Hom_{\Ac}(T,F)=0$ for any $T \in \mathcal T, F \in \mathcal F$;
\item every object $E$ in $\Ac$ fits in a short exact sequence
\[
0 \to T \to E \to F \to 0
\]
where $T \in \mathcal T, F \in \mathcal F$.
\end{itemize}
That is, every object in $\Ac$ has a two-step filtration with respect to a torsion pair.  We refer to $\mathcal T$ (resp.\ $\mathcal F$) as the torsion class (resp.\ torsion-free class) of the torsion pair.

More generally, a torsion $n$-tuple $(\CC_1, \CC_2,\cdots,\CC_n)$ in $\Ac$  \cite[Section 2.2]{Pol2} is a collection of full subcategories of $\Ac$ such that
\begin{itemize}
\item $\Hom_{\Ac} (C_i,C_j)=0$ for any $C_i \in \mathcal C_i, C_j \in \mathcal C_j$ where $i<j$;
\item every object $E$ of $\Ac$ admits a filtration in $\Ac$
\[
  0=E_0 \subseteq E_1 \subseteq E_2 \subseteq \cdots \subseteq E_n = E
\]
where $E_i/E_{i-1} \in \mathcal C_i$ for each $1 \leq i \leq n$.
\end{itemize}
The same notion  also appeared in Toda's work \cite[Definition 3.5]{Toda2}.

Any time we have a torsion pair $(\mathcal T, \mathcal F)$ in an abelian category $\Ac$, the extension closure in $D^b(\Ac)$
\begin{align*}
  \Ac^\# &= \langle \mathcal F [1], \mathcal T \rangle \\
  &= \{ E \in D^b(\Ac) : H^{-1}(E) \in \mathcal F, H^0(E) \in \mathcal T, H^i(E) =0 \, \, \forall\, i \neq -1,0\}
\end{align*}
will be the heart of another t-structure on $D^b(\Ac)$.  We say $\Ac^\#$ is the heart obtained by tilting $\Ac$ at the torsion pair $(\mathcal T,\mathcal F)$.

For any polarisation $\omega$ on $X$,  we have the torsion pair $(\Tw, \Fw)$ in $\Coh (X)$ where
\begin{align*}
  \Tw &= \{ E \in \Coh (X): \text{for every sheaf quotient }E \twoheadrightarrow A, \mu_\omega (A)>0\}, \\
  \Fw &= \{ E \in \Coh (X): \text{ for every subsheaf $F \subset E$}, \mu_\omega (F) \leq 0\}
\end{align*}
and also the torsion pair $(W_{0,X},W_{1,X})$ where
\begin{align*}
  W_{0,X} &= \{ E \in \Coh (X): E \text{ is $\Phi$-WIT$_0$}\}, \\
  W_{1,X} &= \{ E \in \Coh (X): E \text{ is $\Phi$-WIT$_1$}\}.
\end{align*}

Since $\Coh (X)$ is a noetherian abelian category, any subcategory $\mathcal T$ closed under quotient and extension in $\Coh (X)$ is the torsion class of a torsion pair in $\Coh (X)$ \cite[Lemma 1.1.3]{Pol}.  The following are all torsion classes in $\Coh (X)$:
\begin{align*}
 \Coh^{\leq d}(X) &= \{ E \in \Coh (X): \dimension \mathrm{supp}(E) \leq d\}, \text{ for any nonnegative integer $d$}, \\
 \Coh (\pi)_{\leq d} &= \{ E \in \Coh (X): \dimension \pi (\mathrm{supp}(E)) \leq d \}, \text{ for any nonnegative integer $d$},\\
 \{\Coh^{\leq 0}\}^\uparrow &= \{E \in \Coh (X): \dimension \mathrm{supp}(E|_s)=0 \text{ for every closed point $s \in S$}\}, \\
 W'_{i,X} &= \{ E \in \Coh (X) : E|_s \text{ is $\Phi_s$-WIT$_i$ for a general closed point $s \in S$}\}, \text{ for $i=0,1$}.
\end{align*}
We also write $\Coh (\pi)_0$ for $\Coh(\pi)_{\leq 0}$.

The following diagram shows the inclusion relations among these torsion classes (see also \cite[Section 1]{Lo11}):
\[
\xymatrix{
 & \Coh (\pi)_{\leq 1} \ar[r] & \Coh^{\leq 2}(X) \ar[r] & W_{0,X}' \\
 \Coh (\pi)_{\leq 0} \ar[r] & \Coh^{\leq 1}(X) \ar[u] & & \\
 \Coh^{\leq 0}(X) \ar[rr] \ar[u] & & \{\Coh^{\leq 0}\}^\uparrow \ar[uu] \ar[r] & W_{0,X} \ar[uu]
}.
\]
For any subcategory $\CC$ of an abelian category $\Ac$, we set
\[
  \CC^\circ = \{ E \in \Ac : \Hom_{\Ac}(A,E)=0 \text{ for all $A \in \CC$}\}.
\]
Whenever $\Ac$ is a noetherian abelian category and $\mathcal T$ is a torsion class in $\Ac$, the category $\mathcal T^\circ$ will be the corresponding torsion-free class in $\Ac$.

Lastly, we set
\[
  \Coh^d(\pi)_e = \{ E \in \Coh (X) : \dimension \mathrm{supp} (E)=d, \dimension \pi (\mathrm{supp}(E))=e\}
\]
for any nonnegative integers $d, e$.

\subsection{Tilt stability} Tilting $\Coh (X)$ at the torsion pair $(\Tw, \Fw)$ gives us the heart
\[
  \Bw = \langle \Fw [1], \Tw \rangle.
\]
In fact, $\Bw$ is a noetherian abelian category \cite[Lemma 3.2.4]{BMT1}, and we have the positivity properties
\begin{itemize}
\item for any $E \in \Bw$, we have $\omega^2 \ch_1 (E) \geq 0$;
\item for any $E \in \Bw$ satisfiying $\omega^2 \ch_1 (E)=0$, we have $\omega \ch_2(E) - \tfrac{\omega^3}{6} \ch_0(E) \geq 0$
\end{itemize}
from \cite[Lemma 3.2.1]{BMT1}.  Therefore, we have the slope-like function on $\Bw$
\begin{equation}
  \nu_\omega (E) = \frac{\omega \ch_2 (E) - \tfrac{\omega^3}{6}\ch_0(E)}{\omega^2\ch_1(E)}.
\end{equation}
An object $E \in \Bw$ is called $\nu_\omega$-semistable if, for every short exact sequence in $\Bw$
\begin{equation}\label{eq51}
0 \to A \to E \to B \to 0
\end{equation}
where $A,B \neq 0$, we have
\begin{equation}\label{eq50}
 \nu_\omega (A) \leq \nu_\omega (B).
\end{equation}
Additionally,  an object $E \in \Bw$ is called $\nu_\omega$-stable if, for every short exact sequence \eqref{eq51} in $\Bw$, we have strict inequality $<$ in \eqref{eq50}.

In the literature, $\nu_\omega$-stability is also called tilt stability.  Tilt stability is an important notion in the study of Bridgeland stability conditions, as it is an intermediate  step in the construction of a Bridgeland stability condition on an arbitrary smooth projective threefold \cite{BMT1, BMS}.

Another equivalent formulation of $\nu_\omega$-stability for objects in $\Bw$ is via the group homomorphism $\oZw : K(X) \to \mathbb{C}$ defined by
\begin{equation}\label{eq2}
 \bZob (E) = \frac{\omega^2}{2} \ch_1^B(E) + i\left( \omega \ch_2^B(E) - \frac{\omega^3}{6} \ch_0^B(E)\right).
\end{equation}
We say $\oZw$ is a reduced central charge because it takes objects in $\Bw$ to the non-strict right-half complex plane including the origin $i\mathbb{H}_0$, where
\[
  \mathbb{H}_0 = \{ z \in \mathbb{C} : \Im z > 0\} \cup \{ z \in \mathbb{C} : \Im z =0, \Re z \leq 0\}.
\]
From \cite[Lemma 3.2.1]{BMT1} and its proof, we know that the objects in $\Bw$ that are taken by $\oZw$ to 0 are precisely the 0-dimensional sheaves on $X$.

For any object $E \in \Bw$, we can now define its phase $\phi (E) \in (-\tfrac{1}{2},\tfrac{1}{2}]$ by the relation
\[
  \oZw (E) \in \mathbb{R}_{>0} e^{i\pi \phi (E)}
\]
if $\oZw (E) \neq 0$; if $\oZw (E)=0$, we set $\phi (E)=\tfrac{1}{2}$.  Using the phase function $\phi$, we can characterise $\nu_\omega$-semistability as follows: an object $E \in\Bw$ is $\nu_\omega$-semistable (resp.\ $\nu_\omega$-stable) if, for every short exact sequence \eqref{eq51} in $\Bw$, we have  $\phi (A) \leq \phi (B)$ (resp.\  $\phi (A) < \phi (B)$).

With $\omega = tH+sD$ and $\ch (E) = (a_{ij})$, we can write
\begin{equation}\label{eq3}
 \oZw (E) = ( 2htsa_{01} + hs^2a_{10}) + i (ta_{02} + 2hsa_{11} -hts^2a_{00}).
\end{equation}
If $ts=\alpha$ for some fixed nonzero real number $\alpha$, then \eqref{eq3} reduces to
\begin{equation}\label{eq4}
  \oZw (E) =  ( 2h\alpha a_{01} + hs^2a_{10}) + i (ta_{02} + 2hsa_{11} -h\alpha sa_{00}).
\end{equation}
We regard \eqref{eq4} as a Laurent polynomial in $s$ (since $t=\alpha s^{-1}$).

\section{Fourier-Mukai transform as a permutation of categories}\label{sec-generation}

In this section, we introduce some of the  torsion pairs that we will use to decompose $\Coh (X)$, and hence the derived category $D^b(X)$.  These torsion pairs will allow us to visualise the Fourier-Mukai transform $\Phi$ as a permutation (up to shift) on suitable categories.

We will use diagrams of the form $\scalea{\gyoung(;*;*;*,;*;*;*)}$, where each $*$ is a sign ($+,-$ or $0$) or empty, to denote  various subcategories of $\Coh (X)$.   Roughly speaking, we think of $\scalea{\gyoung(;;;,;;;)}$ as a 2 by 3 matrix whose entries correspond to the different components of the Chern character in our matrix notation in Section \ref{section-matrixchern}.

For example, if the $\ch_{01}$-position in $\scalea{\gyoung(;;;,;;;)}$ has the sign `$+$',   it means that every sheaf in this category has $\ch_{01}>0$.  When a particular position is left blank, it means the corresponding $\ch_{ij}$ is zero.  We will concatenate multiple such diagrams to denote their extension closure in $\Coh (X)$.

First we set
\[
 \scalea{\gyoung(;;;,;;;+)}=C^0 := \Coh^{\leq 0}(X).
\]

Then, for sheaves in the abelian category $\Coh (\pi)_0$, which we call fiber sheaves, we have the slope-like function $\mu = \ch_3/H\ch_{02}$.  With respect to this slope-like function, we set
\begin{align*}
      \scalea{\gyoung(;;;+,;;;+)}=C^1_{0,+} &:= \{ E \in \Coh^1(\pi)_0 :  \text{ all $\mu$-HN factors of $E$ have $\infty>\mu>0$}\} \\
  \scalea{\gyoung(;;;+,;;;0)}=C^1_{0,0} &:=\{ E \in \Coh^1(\pi)_0 : \text{ all $\mu$-HN factors of $E$ have $\mu=0$}\} \\
  \scalea{\gyoung(;;;+,;;;-)}=C^1_{0,-} &:= \{ E \in \Coh^1(\pi)_0 :  \text{ all $\mu$-HN factors of $E$ have $\mu<0$}\}.
\end{align*}
Note that $C^0, C^1_{0,+}$ are contained in $W_{0,X}$ while  $C^1_{0,0}, C^1_{0,-}$ are contained in $W_{1,X}$ \cite[Corollary 3.29]{FMNT}.  The four categories we have defined so far generate the category of all fiber sheaves, i.e.\
\[
  \Coh (\pi)_0 = \Coh^1(\pi)_0 =
   \xymatrix @-2.3pc{
   \scalea{\gyoung(;;;,;;;+)} &  \scalea{\gyoung(;;;+,;;;+)} \\
   & \scalea{\gyoung(;;;+,;;;0)}\\
   & \scalea{\gyoung(;;;+,;;;-)}
   }.
\]
Notice that $\Phi$ induces the equivalences
\[
   \scalea{\gyoung(;;;,;;;+)} \overset{\thicksim}{\longrightarrow} \scalea{\gyoung(;;;+,;;;0)} \text{\quad and \quad} \scalea{\gyoung(;;;+,;;;+)} \overset{\thicksim}{\longrightarrow} \scalea{\gyoung(;;;+,;;;-)}
\]
by \cite[Corollary 3.29]{FMNT}.

To generate the entire category $\Coh^{\leq 1}(X)$, we also need
\[
  \scalea{\gyoung(;;;*,;;+;*)} = C^1_1 := \Coh^1(\pi)_1 \cap \{\Coh^{\leq 0}\}^\uparrow.
\]
It follows from  \cite[Lemma 3.15]{Lo11} that
\[
\Coh^{\leq 1}(X) = \xymatrix @-2.3pc{
   \scalea{\gyoung(;;;,;;;+)} &  \scalea{\gyoung(;;;+,;;;+)}  & \scalea{\gyoung(;;;*,;;+;*)}\\
   & \scalea{\gyoung(;;;+,;;;0)} & \\
   & \scalea{\gyoung(;;;+,;;;-)} &
   }.
\]

Next, to generate sheaves in $\Coh (\pi)_{\leq 1}$, we consider
\begin{align*}
  \scalea{\gyoung(;;+;*,;;+;*)}= C^2_{1,+} &:= \Coh^2(\pi)_1 \cap W_{0,X}  \\
  \scalea{\gyoung(;;+;*,;;0;*)}= C^2_{1,0} &:= \left\{ E \in \Phi ( \{\Coh^{\leq 0}\}^\uparrow \cap \Coh^{\leq 1}(X)) : \ch_{01}(E) \neq 0\right\} \\
  \scalea{\gyoung(;;+;*,;;-;*)} =C^2_{1,-} &:= \{ E \in \Coh^2(\pi)_1 \cap W_{1,X} : \ch_{11}(E) < 0\}.
\end{align*}

\begin{remark}\label{remark10}
For any object $E \in  \scalea{\gyoung(;;+;*,;;+;*)}$, we know $\ch(E)$ is of the form \mbox{\tiny $\begin{pmatrix}
  0 & \ast & \ast \\
  0 & \ast & \ast
\end{pmatrix}$}
with $\ch_{01}>0$ and $\ch_{11} \geq 0$  \cite[Remark 5.17]{LZ2}.  However, if $E$ is nonzero, then we must have $\ch_{11}(E)>0$.  For, if $\ch_{11}(E)=0$, then $\wh{E}$ would be a $\Phi$-WIT$_1$ sheaf with $\ch (\wh{E}) = \mbox{\tiny $\begin{pmatrix}
                                                                0 & 0 & \ast \\
                                                                0 & \ast & \ast
                                                              \end{pmatrix}$}$,
i.e.\ $\wh{E} \in \Coh^{\leq 1}$ \cite[Lemma 5.10]{LZ2}.  Then $\wh{E}$ must be a fiber sheaf, implying $E$ itself is a fiber sheaf, contradicting $\dimension E = 2$.  For this reason, we have the `+' sign in the $\ch_{11}$-position of $\scalea{\gyoung(;;+;*,;;+;*)}$.
\end{remark}
We have
\[
\Coh (\pi)_{\leq 1} = \left\langle \Coh^{\leq 1}(X), \vcenter{\vbox{
\xymatrix @-2.3pc{
 \scalea{\gyoung(;;+;*,;;+;*)} \\
 \scalea{\gyoung(;;+;*,;;0;*)} \\
 \scalea{\gyoung(;;+;*,;;-;*)}
}
}} \right\rangle
\]
by Lemma \ref{lemma24}.  Note the following equivalences of categories under $\Phi$:
\[
\scalea{\gyoung(;;;*,;;+;*)} \overset{\thicksim}{\longrightarrow} \scalea{\gyoung(;;+;*,;;0;*)} \text{\quad and \quad} \scalea{\gyoung(;;+;*,;;+;*)} \overset{\thicksim}{\longrightarrow} \scalea{\gyoung(;;+;*,;;-;*)}.
\]

To generate  $\Coh^{\leq 2}(X)$, i.e.\ the category of torsion sheaves on $X$, we also need
\[
\scalea{\gyoung(;;*;*,;+;*;*)} = C^2_{2,+} := \{ E \in \Coh^2(\pi)_2 \cap W_{0,X} : \ch_{10}(E)>0 \}.
\]
We have
\[
  \Coh^{\leq 2}(X) = \left\langle \Coh (\pi)_{\leq 1}(X), \scalea{
  \gyoung(;;*;*,;+;*;*)} \right\rangle
\]
by Lemma \ref{lemma25}.

To generate the entire category of coherent sheaves $\Coh (X)$, we need three more subcategories:
\begin{align*}
  \scalea{\gyoung(;+;*;*,;+;*;*)} = C^3_{2,_+} &= \Coh^3(\pi)_2 \cap W_{0,X}  \\
  \scalea{\gyoung(;+;*;*,;0;*;*)} = C^3_{2,0} &= \{ E \in \Coh^3(\pi)_2 \cap W_{1,X} : \ch_{10}(E)=0\} \\
  \scalea{\gyoung(;+;*;*,;-;*;*)} = C^3_{2,-} &= \{ E \in \Coh^3 (\pi)_2 \cap W_{1,X} : \ch_{10}(E)<0\}.
\end{align*}
Note that, for the same reason as in Remark \ref{remark10}, any nonzero object $E \in \Coh^3 (\pi)_2 \cap W_{0,X}$ must have $\ch_{10}(E)>0$.  This is why we have a  `+' sign in the $\ch_{10}$-position of $\scalea{\gyoung(;+;*;*,;+;*;*)}$.  Lemma \ref{lemma26} now gives
\[
  \Coh (X) = \left\langle \Coh^{\leq 2}(X), \vcenter{\vbox{
  \xymatrix @-2.3pc{
   \scalea{\gyoung(;+;*;*,;+;*;*)} \\
   \scalea{\gyoung(;+;*;*,;0;*;*)} \\
   \scalea{\gyoung(;+;*;*,;-;*;*)}
   }
   }} \right\rangle.
\]

Note that $\Phi$ induces the equivalences of categories
\[
  \scalea{\gyoung(;;*;*,;+;*;*)} \overset{\thicksim}{\longrightarrow} \scalea{\gyoung(;+;*;*,;0;*;*)} \text{\quad and \quad} \scalea{\gyoung(;+;*;*,;+;*;*)}  \overset{\thicksim}{\longrightarrow} \scalea{\gyoung(;+;*;*,;-;*;*)}.
\]

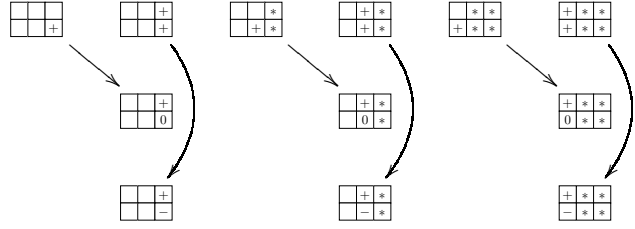
\begin{figure}
  \centerline{
  \xymatrix @-0.7pc{
\scalea{\gyoung(;;;,;;;+)} \ar[dr] & \scalea{\gyoung(;;;+,;;;+)} \ar@/^1.5pc/[dd] & \scalea{\gyoung(;;;*,;;+;*)} \ar[dr] & \scalea{\gyoung(;;+;*,;;+;*)} \ar@/^1.5pc/[dd] & \scalea{\gyoung(;;*;*,;+;*;*)} \ar[dr] & \scalea{\gyoung(;+;*;*,;+;*;*)} \ar@/^1.5pc/[dd] \\
 & \scalea{\gyoung(;;;+,;;;0)} & & \scalea{\gyoung(;;+;*,;;0;*)} & & \scalea{\gyoung(;+;*;*,;0;*;*)} \\
 & \scalea{\gyoung(;;;+,;;;-)} & & \scalea{\gyoung(;;+;*,;;-;*)} & & \scalea{\gyoung(;+;*;*,;-;*;*)}
 }
 }
  \caption{The geometry of the Fourier-Mukai transform $\Phi$ }\label{tetris1}
\end{figure}

Figure \ref{tetris1} summarises the various equivalences of categories induced by the Fourier-Mukai transform $\Phi$.  The configuration of Figure \ref{tetris1}  also contains  information about torsion classes in $\Coh (X)$:
\begin{enumerate}
\item The categories in the top row (those whose lower left-most entry is a `+') are all contained in $W_{0,X}$, while those in the second the third rows (those whose lower left-most  entry is 0 or `$-$') are contained in $W_{1,X}$.
\item The categories in columns 1 through $i$ generate a torsion class in $\Coh (X)$.  As $i$ goes from 1 to 6, they generate the nested sequence
    \begin{equation}\label{eq45}
      \Coh^{\leq 0}(X) \subset \Coh (\pi)_0 \subset \Coh^{\leq 1}(X) \subset \Coh (\pi)_{\leq 1} \subset \Coh^{\leq 2}(X) \subset \Coh (X).
    \end{equation}
\item For $i \in \{1, 3, 5\}$ and any $1 \leq j \leq 3$, the categories in columns 1 through $i$ together with those in rows 1 through $j$ of column $(i+1)$ also generate a torsion class in $\Coh (X)$ (see Lemma \ref{lemma27}).

    More explicitly, we have the following nested sequence of torsion classes refining \eqref{eq45}:
\begin{align}
&\scalebox{0.5}{
\xymatrix{
\ysize\gyoung(;;;,;;;+)
}
} \label{eq46}
\\
\subset
&\scalebox{0.5}{
\xymatrix @-2.5pc{
\ysize\gyoung(;;;,;;;+) & \ysize\gyoung(;;;+,;;;+)
}
}
\subset
\scalebox{0.5}{
\xymatrix @-2.5pc{
\ysize\gyoung(;;;,;;;+) & \ysize\gyoung(;;;+,;;;+) \\
& \ysize\gyoung(;;;+,;;;0)
}
}
\subset
\scalebox{0.5}{
\xymatrix @-2.5pc{
\ysize\gyoung(;;;,;;;+) & \ysize\gyoung(;;;+,;;;+) \\
& \ysize\gyoung(;;;+,;;;0) \\
& \ysize\gyoung(;;;+,;;;-)
}
}
\subset
\scalebox{0.5}{
\xymatrix @-2.5pc{
\ysize\gyoung(;;;,;;;+) & \ysize\gyoung(;;;+,;;;+) & \ysize\gyoung(;;;*,;;+;*) \\
& \ysize\gyoung(;;;+,;;;0) & \\
& \ysize\gyoung(;;;+,;;;-) &
}
} \notag
\\
\subset
&\scalebox{0.5}{
\xymatrix @-2.5pc{
\ysize\gyoung(;;;,;;;+) & \ysize\gyoung(;;;+,;;;+) & \ysize\gyoung(;;;*,;;+;*) & \ysize\gyoung(;;+;*,;;+;*) \\
& \ysize\gyoung(;;;+,;;;0) & &  \\
& \ysize\gyoung(;;;+,;;;-) & &
}
}
\subset
\scalebox{0.5}{
\xymatrix @-2.5pc{
\ysize\gyoung(;;;,;;;+) & \ysize\gyoung(;;;+,;;;+) & \ysize\gyoung(;;;*,;;+;*) & \ysize\gyoung(;;+;*,;;+;*) \\
& \ysize\gyoung(;;;+,;;;0) & & \ysize\gyoung(;;+;*,;;0;*) \\
& \ysize\gyoung(;;;+,;;;-) & &
}
}
\subset
\scalebox{0.5}{
\xymatrix @-2.5pc{
\ysize\gyoung(;;;,;;;+) & \ysize\gyoung(;;;+,;;;+) & \ysize\gyoung(;;;*,;;+;*) & \ysize\gyoung(;;+;*,;;+;*) \\
& \ysize\gyoung(;;;+,;;;0) & & \ysize\gyoung(;;+;*,;;0;*) \\
& \ysize\gyoung(;;;+,;;;-) & & \ysize\gyoung(;;+;*,;;-;*)
}
}
\subset
\scalebox{0.5}{
\xymatrix @-2.5pc{
\ysize\gyoung(;;;,;;;+) & \ysize\gyoung(;;;+,;;;+) & \ysize\gyoung(;;;*,;;+;*) & \ysize\gyoung(;;+;*,;;+;*) & \ysize\gyoung(;;*;*,;+;*;*) \\
& \ysize\gyoung(;;;+,;;;0) & & \ysize\gyoung(;;+;*,;;0;*) &\\
& \ysize\gyoung(;;;+,;;;-) & & \ysize\gyoung(;;+;*,;;-;*) &
}
} \notag
\\
\subset
&\scalebox{0.5}{
\xymatrix @-2.5pc{
\ysize\gyoung(;;;,;;;+) & \ysize\gyoung(;;;+,;;;+) & \ysize\gyoung(;;;*,;;+;*) & \ysize\gyoung(;;+;*,;;+;*) & \ysize\gyoung(;;*;*,;+;*;*) & \ysize\gyoung(;+;*;*,;+;*;*)\\
& \ysize\gyoung(;;;+,;;;0) & & \ysize\gyoung(;;+;*,;;0;*) & & \\
& \ysize\gyoung(;;;+,;;;-) & & \ysize\gyoung(;;+;*,;;-;*) & &
}
}
\subset
\scalebox{0.5}{
\xymatrix @-2.5pc{
\ysize\gyoung(;;;,;;;+) & \ysize\gyoung(;;;+,;;;+) & \ysize\gyoung(;;;*,;;+;*) & \ysize\gyoung(;;+;*,;;+;*) & \ysize\gyoung(;;*;*,;+;*;*) & \ysize\gyoung(;+;*;*,;+;*;*)\\
& \ysize\gyoung(;;;+,;;;0) & & \ysize\gyoung(;;+;*,;;0;*) & & \ysize\gyoung(;+;*;*,;0;*;*)\\
& \ysize\gyoung(;;;+,;;;-) & & \ysize\gyoung(;;+;*,;;-;*) & &
}
}
\subset
\scalebox{0.5}{
\xymatrix @-2.5pc{
\ysize\gyoung(;;;,;;;+) & \ysize\gyoung(;;;+,;;;+) & \ysize\gyoung(;;;*,;;+;*) & \ysize\gyoung(;;+;*,;;+;*) & \ysize\gyoung(;;*;*,;+;*;*) & \ysize\gyoung(;+;*;*,;+;*;*)\\
& \ysize\gyoung(;;;+,;;;0) & & \ysize\gyoung(;;+;*,;;0;*) & & \ysize\gyoung(;+;*;*,;0;*;*)\\
& \ysize\gyoung(;;;+,;;;-) & & \ysize\gyoung(;;+;*,;;-;*) & & \ysize\gyoung(;+;*;*,;-;*;*)
}
} \notag
\end{align}
\end{enumerate}

\section{Limit tilt stability}\label{section-limittiltstab}

\subsection{Heuristics - refining slope stability}\label{section-heuristics}

For a fixed polarisation $\omega$ on $X$, the notion of $\mu_\omega$-stability on coherent sheaves can be phrased in terms of the weak central charge $Z_{\mu_\omega} : K(X) \to \mathbb{C}$ defined by
\begin{equation}\label{eq1}
  Z_{\mu_\omega} (E) = -\omega^2 \ch_1(E) + i \ch_0(E).
\end{equation}
For any coherent sheaf $E$ on $X$, the value of $Z_{\mu_\omega}(E)$ lies in $\mathbb{H}_0$, and so we can define the phase $\psi (E) \in (0,1]$ of $E$ by the relations
\[
  \begin{cases}
  Z_{\mu_\omega}(E) \in \mathbb{R}_{>0}e^{i\pi \psi (E)} &\text{ if $Z_{\mu_\omega} (E)\neq 0$} \\
  \psi (E)=1 &\text{ if $Z_{\mu_\omega} (E)= 0$}.
  \end{cases}
\]
A coherent sheaf $E$ on $X$ is  called $\mu_\omega$-semistable (resp.\ $\mu_\omega$-stable) if, for every short exact sequence in $\Coh (X)$
\[
0 \to A \to E \to B \to 0
\]
where $A,B \neq 0$, we have $\psi (A) \leq \psi (B)$ (resp.\ $\psi (A) < \psi (B)$).

Let us set
\[
\wh{\Coh (X)} = \Phi (\Coh (X))[1] = \langle W_{1,X}[1], W_{0,X}\rangle.
\]
For any fixed coherent sheaf $E$ on $X$, let us write $F = (\Phi (E))[1]$,  $\ch (E)=(a_{ij})$ and $\ch (F) = (b_{ij})$; then $F$ lies in the heart $\wh{\Coh (X)}$ and we  have the relation
\[
  \ch (F) = \begin{pmatrix} b_{00} & b_{01} & b_{02} \\
  b_{10} & b_{11} & b_{12} \end{pmatrix} =
  \begin{pmatrix} -a_{10} & -a_{11} & -a_{12} \\
  a_{00} & a_{01} & a_{02}
  \end{pmatrix}.
\]
With respect to the polarisation
\begin{equation}\label{eq52}
  \bar{\omega} = \tfrac{\lambda}{\alpha} H + \lambda D
\end{equation}
where $\lambda, \alpha >0$ are real numbers, we have
\begin{equation}\label{eq12}
 \bar{\omega}^2 \ch_1 (E) = 2h (2 \tfrac{\lambda^2}{\alpha} b_{11} - \lambda^2 b_{00}) = 2h \tfrac{\lambda^2}{\alpha}(2b_{11}-\alpha b_{00}).
\end{equation}
This means that the group homomorphism $Z_\alpha : K(X) \to \mathbb{C}$ given by
\begin{equation}\label{eq5}
  Z_\alpha (F) = b_{10} + i  (2b_{11}-\alpha b_{00})
\end{equation}
always takes objects $F$ in $\wh{\Coh (X)}$ into the half-plane $-i\mathbb{H}_0$ - see Figure \ref{fig:pic2}.

As a result, for every object $F \in \wh{\Coh (X)}$ we can define its phase $\psi_\alpha (F) \in (-\tfrac{1}{2},\tfrac{1}{2}]$ by the relations
\[
  \begin{cases}
  Z_{\alpha}(F) \in \mathbb{R}_{>0}e^{i\pi \psi_\alpha (E)} &\text{ if $Z_{\alpha} (F)\neq 0$} \\
  \psi_\alpha (F)=\tfrac{1}{2} &\text{ if $Z_{\alpha} (F)= 0$}.
  \end{cases}
\]
This gives a notion of $Z_\alpha$-stability where, for an object $F \in \wh{\Coh (X)}$, we say it is $Z_\alpha$-semistable (resp.\ $Z_\alpha$-stable) if for every short exact sequence in $\wh{\Coh (X)}$
\[
0 \to A \to F \to B \to 0
\]
where $A,B \neq 0$, we have $\psi_\alpha (A) \leq \psi_\alpha (B)$ (resp.\ $\psi_\alpha (A) < \psi_\alpha (B)$).  Overall, if  $E$ is a coherent sheaf on $X$ and we set $F=\Phi E[1]$, then
\begin{equation}
  E  \text{ is $\mu_{\bo}$-semistable}  \text{ if and only if } F  \text{ is $Z_\alpha$-semistable}.
\end{equation}
That is, we can regard the pair $(\wh{\Coh (X)}, Z_\alpha)$ as the image of $(\Coh (X), Z_{\mu_{\bo}})$ under the action of $\Phi$.

Notice that the terms in $Z_\alpha$ correspond to the two highest-degree terms of $\oZw (F)$ (up to scalar multiples) when we consider $\oZw (F)$ as a Laurent polynomial in $s$.  This motivates the principle that $\oZw$ should yield a notion of stability that refines $\mu_{\bo}$-stability for coherent sheaves.  This principle will be made precise in the rest of this paper.

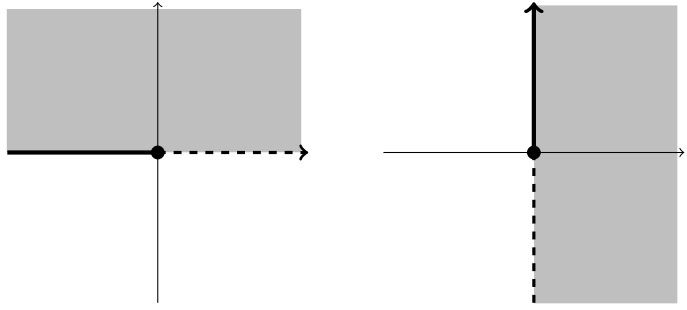
\begin{figure}
\centering
\begin{tikzpicture}
\filldraw[color=gray!50] (-2,0.01)rectangle(1.9,1.9);
\draw[black, ultra thick] (-2,0) -- (0,0);
\draw[black, very thick, dashed, ->] (0,0) -- (2,0);
\draw[black, thin, ->] (0,-2) -- (0,2);
\filldraw[black, very thick] (0,0) circle (2pt);
%
\filldraw[color=gray!50] (5.01,-2)rectangle(6.9,1.95);
\draw[black, thin, ->] (3,0) -- (7,0);
\draw[black, ultra thick, ->] (5,0) -- (5,2);
\draw[black, very thick, dashed] (5,-2) -- (5,0);
\filldraw[black, very thick] (5,0) circle (2pt);
\end{tikzpicture}
\caption{Half-planes $\mathbb{H}_0$ (left) and $-i\mathbb{H}_0$ (right)}
\label{fig:pic2}
\end{figure}

\subsection{A limit of $\Bw$}\label{Bwl-def}

Since the polarisation $\omega = tH+sD$ depends on the coefficients $t,s$, the heart $\Bw$ also depend on $t,s$.  In this section, we show that when $\alpha >0$ is fixed and $t, s$ move  along the positive branch of the hyperbola $ts=\alpha$ on the $(s,t)$-plane in the direction $s \to \infty$, there is a sensible definition of the limit of $\Bw$, which we denote by $\Bl$.


\begin{lemma}\label{lemma1}
Given a fixed real number $t_0>0$ and a coherent sheaf $F$ on $X$, the following are equivalent:
\begin{itemize}
\item[(a)] There exists some $s_0>0$ such that $F \in \Fw$ for all $(s,t) \in (s_0,\infty) \times (0,t_0)$.
\item[(b)] There exists some $s_0>0$ such that, for any nonzero subsheaf $A \subseteq F$, we have $\mu_\omega (A) \leq 0$ for all $(s,t) \in (s_0,\infty) \times (0,t_0)$.
\item[(c)] For any nonzero subsheaf $A \subseteq F$, either (i) $\mu_f (A)<0$, or (ii) $\mu_f(A)=0$ and $\mu^\ast (A) \leq 0$.
\end{itemize}
\end{lemma}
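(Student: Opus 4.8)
The plan is to establish (a) $\Leftrightarrow$ (b) and (b) $\Leftrightarrow$ (c) separately; the first is purely formal and the second carries all the content. For (a) $\Leftrightarrow$ (b), I would simply unfold the definition of the torsion-free class $\Fw$: to say $F \in \Fw$ is exactly to say that every nonzero subsheaf $A \subseteq F$ has $\mu_\omega(A) \leq 0$, so (a) and (b) assert the same thing, differing only in the order of the two universal quantifiers ``for all $(s,t)$'' and ``for all $A$'', which commute. Before turning to (b) $\Leftrightarrow$ (c), I would record that both conditions force $F$ to be torsion-free: a nonzero torsion subsheaf $A$ has $\ch_0(A)=0$, hence $\mu_\omega(A)=+\infty$ and $\mu_f(A)=+\infty$, violating (b) and (c) alike. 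Thus I may assume every nonzero subsheaf $A$ satisfies $0 < \rk A \leq \rk F =: r$.

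For (b) $\Rightarrow$ (c) I would argue by contraposition, one subsheaf at a time, rewriting \eqref{eq6} as $\mu_\omega(A) = 2hs\big(s\,\mu_f(A) + 2t\,\mu^\ast(A)\big)$. If (c) fails for some $A$, then either $\mu_f(A)>0$, in which case the bracket tends to $+\infty$ as $s \to \infty$, or $\mu_f(A)=0$ and $\mu^\ast(A)>0$, in which case $\mu_\omega(A)=4hts\,\mu^\ast(A)>0$ for every $(s,t)$. In either case $\mu_\omega(A)>0$ for arbitrarily large $s$, which contradicts (b).

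The heart of the proof is (c) $\Rightarrow$ (b), where the genuine difficulty is to find one $s_0$ valid simultaneously for the infinitely many subsheaves of $F$. Using the same factorisation, a subsheaf $A$ with $\mu_f(A)=0$ has $\mu^\ast(A)\leq 0$ by (c), so $\mu_\omega(A)=4hts\,\mu^\ast(A)\leq 0$ for all $s,t>0$ and imposes no constraint. For a subsheaf with $\mu_f(A)<0$, I would control both summands of the bracket uniformly: since the matrix entry $a_{10}(A)$ is a negative integer while $a_{00}(A)=\rk A \leq r$, one has the uniform gap $\mu_f(A)=a_{10}(A)/a_{00}(A)\leq -1/r$; and since $\mu^\ast$ is a slope-like function on $\Coh(X)$, its Harder-Narasimhan property makes $\mu^\ast_{max}(F)$ finite and an upper bound for $\mu^\ast(A)$. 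Writing $M=\max\{0,\mu^\ast_{max}(F)\}$ and using $t<t_0$, these give
\[
  s\,\mu_f(A)+2t\,\mu^\ast(A)\;\leq\;-\tfrac{s}{r}+2t_0 M,
\]
which is $\leq 0$ once $s\geq 2rt_0 M$. Hence $s_0:=\max\{1,\,2rt_0 M\}$ works for all subsheaves at once, giving (b).

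I expect the uniformity in this last step to be the main obstacle, and the point I would be most careful about. Everything hinges on two facts that prevent the per-subsheaf thresholds from accumulating: the integrality of $a_{10}$ combined with $\rk A \leq r$, which stops $\mu_f(A)$ from approaching $0$ through negative values, and the finiteness of the secondary slope $\mu^\ast_{max}(F)$. If either failed, a sequence of subsheaves with $\mu_f(A)\to 0^-$ and $\mu^\ast(A)\to\infty$ could push the required $s$ to infinity, and no uniform $s_0$ would exist.
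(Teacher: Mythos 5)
Your proof is correct and follows essentially the same route as the paper's: (a) $\Leftrightarrow$ (b) by unfolding the definition of $\Fw$, (b) $\Rightarrow$ (c) by letting $s \to \infty$ in \eqref{eq6}, and (c) $\Rightarrow$ (b) by combining the uniform negative gap $\mu_f(A) \leq -c/\rk(F)$ (with $c=1$ here) with the bound $\mu^\ast(A) \leq \mu^\ast_{\max}(F) < \infty$ to produce a single $s_0$; your explicit $s_0 = \max\{1, 2rt_0M\}$ matches the bound the paper records in Remark \ref{remark9}. No gaps.
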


Let $c$ denote the least possible positive fiber degree, which happens to be 1 in our case of the product elliptic threefold $X = C \times S$.

\begin{proof}
(a) $\Leftrightarrow$ (b): this is clear.

(b) $\Rightarrow$ (c): given (b), that (i) or (ii) must hold is clear from \eqref{eq6}.

(c) $\Rightarrow$ (b): suppose (c) holds.  Take any nonzero subsheaf $A \subseteq F$.   Since $\mu_f (A) \leq 0$ by hypothesis, we know $F$ must be torsion-free.  If $\mu_f (A)=0$, then $\mu^\ast (A) \leq 0$ by assumption, and we have $\mu_\omega (A) \leq 0$ for any ample class $\omega$.

On the other hand, if $\mu_f (A) <0$, then from $0 < \rk (A) \leq \rk (F)$ we have
\[
  \mu_f (A) \leq \frac{-c}{\rk (F)} < 0.
\]
Also, since $\mu^\ast$ is a slope-like function on $\Coh (X)$, we have $\mu^\ast_{\max}(F) \geq \mu^\ast (B)$ for any subsheaf $B \subseteq F$.  In particular
\[
  \mu^\ast (A)  \leq \mu^\ast_{\max} (F) < \infty
\]
where the second inequality holds because $F$ is torsion-free. Writing $\ch(A)=(a_{ij})$,  \eqref{eq6} now gives
\[
  \mu_\omega (A) = 4hts\mu^\ast (A) + 2hs^2 \mu_f (A).
\]
Therefore, if $t \in (0,t_0)$ then $4ht\mu^\ast (A)$ is bounded above by $4ht_0 \mu^\ast_{\max}(F)$ which depends only on $t_0$ and $F$, while $2h\mu_f(A)$ is strictly negative with a nonzero upper bound depending only on $\rk (F)$.  From  \eqref{eq6} , we now see that there exists an $s_0>0$, depending only on $F$ and $t_0$, such that  for any nonzero subsheaf $A \subseteq F$ and any $(s,t) \in (s_0,\infty) \times (0,t_0)$ we have $\mu_\omega (A) < 0$.  Thus (b) holds.
\end{proof}

\begin{remark}\label{remark9}
In the implication (c) $\Rightarrow$ (a) in the proof of Lemma \ref{lemma1}, we can pick $s_0$ to be
\[
  s_0 = \frac{4ht_0\mu^\ast_{\max}(F)}{2h\tfrac{c}{\rk(F)}} = \frac{2t_0\rk(F)\mu^\ast_{\max}(F)}{c}.
\]
\end{remark}

Lemma \ref{lemma1} has the following counterpart:

\begin{lemma}\label{lemma2}
Given a fixed real number $t_0 >0$ and a coherent sheaf $F$ on $X$, the following are equivalent:
\begin{itemize}
\item[(a)] There exists some $s_0>0$ such that $F \in \Tw$ for all $(s,t) \in (s_0, \infty) \times (0,t_0)$.
\item[(b)] There exists some $s_0>0$ such that, for any nonzero sheaf quotient $F \twoheadrightarrow A$, we have $\mu_\omega (A) > 0$ for all $(s,t) \in (s_0, \infty) \times (0,t_0)$.
\item[(c)] For any nonzero sheaf quotient $F \twoheadrightarrow A$, either (i) $\mu_f (A)>0$, or (ii) $\mu_f(A)=0$ and $\mu^\ast (A) > 0$.
\end{itemize}
\end{lemma}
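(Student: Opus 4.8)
The plan is to mirror the proof of Lemma \ref{lemma1}, interchanging the roles of subsheaves and quotients, and of the maximal and minimal slopes of the slope-like function $\mu^\ast$. The equivalence (a) $\Leftrightarrow$ (b) is immediate, since condition (b) for a single pair $(s,t)$ is exactly the defining condition for $F \in \Tw$; the clause ``for all $(s,t) \in (s_0,\infty)\times(0,t_0)$'' only adds the quantifier over large $s$.

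For (b) $\Rightarrow$ (c), I would rewrite \eqref{eq6} as $\mu_\omega(A) = 2hs\bigl(2t\mu^\ast(A) + s\mu_f(A)\bigr)$ for a nonzero quotient $F \twoheadrightarrow A$ of positive rank. Since $2hs > 0$, condition (b) is equivalent to $2t\mu^\ast(A) + s\mu_f(A) > 0$ on the region. Fixing $t$ and letting $s \to \infty$ forces $\mu_f(A) \geq 0$ (otherwise the bracket tends to $-\infty$), and if $\mu_f(A) = 0$ then $2t\mu^\ast(A) > 0$ for all $t \in (0,t_0)$ forces $\mu^\ast(A) > 0$. A rank-$0$ quotient has $\mu_f(A) = +\infty$, so (i) holds there. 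Hence (i) or (ii) holds in every case.

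The substance of the lemma is (c) $\Rightarrow$ (b). Here I would first dispose of torsion quotients: any nonzero rank-$0$ quotient $A$ has $\mu_\omega(A) = +\infty > 0$ for every $\omega$, so they impose no constraint, and if $F$ is itself torsion the statement is trivial (every quotient is torsion). Assuming $F$ has positive rank, I treat a positive-rank quotient $F \twoheadrightarrow A$. By (c) either $\mu_f(A) = 0$ with $\mu^\ast(A) > 0$ --- whence $\mu_\omega(A) = 4hts\,\mu^\ast(A) > 0$ for all $s,t>0$ --- or $\mu_f(A) > 0$. In the latter case the point is to produce bounds uniform over all such $A$. Since the fiber degree $a_{10}(A)$ is a positive integer bounded below by the least positive fiber degree $c$, and $0 < \rk(A) \leq \rk(F)$, we get $\mu_f(A) \geq c/\rk(F) > 0$; and because $\mu^\ast$ is a slope-like function on $\Coh(X)$ with the HN property, every quotient satisfies $\mu^\ast(A) \geq \mu^\ast_{\min}(F)$ (the dual of the bound $\mu^\ast(B) \leq \mu^\ast_{\max}(F)$ on subsheaves used in Lemma \ref{lemma1}), a finite lower bound depending only on $F$. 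Writing $M = \min\{\mu^\ast_{\min}(F),\, 0\}$ and using $0 < t < t_0$, I bound $2t\mu^\ast(A) + s\mu_f(A) \geq 2t_0 M + s\,c/\rk(F)$, which is positive once $s > 2t_0\rk(F)|M|/c$. Taking $s_0$ to be this value (analogous to the explicit constant in Remark \ref{remark9}) yields $\mu_\omega(A) > 0$ simultaneously for all nonzero quotients on $(s_0,\infty)\times(0,t_0)$, giving (b).

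The main obstacle is exactly this uniformity: the set of quotients of $F$ is infinite, so a single $s_0$ must work for all of them at once. This is what forces the slope-like machinery --- the pointwise lower bound $\mu^\ast(A) \geq \mu^\ast_{\min}(F)$ together with the integrality bound on the fiber degree --- rather than an ad hoc estimate for a fixed quotient. The remaining verifications, namely that torsion $F$ and rank-$0$ quotients are harmless and that the bracket is monotone in $s$, are routine.
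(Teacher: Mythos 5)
Your proposal is correct and follows essentially the same route as the paper's proof: dispose of torsion quotients and the $\mu_f=0$ case directly, then for $\mu_f(A)>0$ use the uniform bounds $\mu_f(A)\geq c/\rk(F)$ and $\mu^\ast(A)\geq \mu^\ast_{\min}(F)$ to extract an $s_0$ depending only on $F$ and $t_0$. Your explicit constant $s_0 = 2t_0\rk(F)|M|/c$ matches the spirit of Remark \ref{remark9}, and your fuller treatment of (b) $\Rightarrow$ (c) is a harmless elaboration of what the paper dismisses as clear.
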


Although the proof of Lemma \ref{lemma2} is similar to that of Lemma \ref{lemma1}, we still include it here for the sake of clarity and completeness.

\begin{proof}
(a) $\Leftrightarrow$ (b): this is clear.

(b) $\Rightarrow$ (c): this is also clear from \eqref{eq6}.

(c) $\Rightarrow$ (b): suppose (c) holds. Take any nonzero sheaf quotient $F \twoheadrightarrow A$.  By hypothesis, we have $\mu_f (A) \geq 0$.  If $A$ is a torsion sheaf, then $A \in \Tw$ for any ample class $\omega$.  From now on, let us assume that $\rk (A) >0$.  If $\mu_f (A)=0$, then $\mu^\ast (A) >0$ by assumption, and so $\mu_\omega (A) = 4hts\mu^\ast (A) >0$ for all $(s,t) \in (0,\infty) \times (0,\infty)$.  So let us also assume $\mu_f (A) > 0$ from here onwards.

Now we have $\mu^\ast (A) \geq \mu^\ast_{\min}(F)$ where $\mu^\ast_{\min}(F)<\infty$.  Thus $4ht\mu^\ast (A)$ is bounded below by $-4ht_0 |\mu^\ast_{\min}(F)|$ which depends only on $t_0$ and $F$, while
\[
  \mu_f (A) \geq \frac{c}{\rk (A)} \geq \frac{c}{\rk (F)} > 0.
\]
Therefore, there exists some $s_0>0$ depending only on $F$ and $t_0$ such that, for all $(s,t) \in (s_0,\infty) \times (0,t_0)$ we have $\mu_\omega (A)>0$.
\end{proof}

Note that  condition (c) in Lemma \ref{lemma1} and  condition (c) in Lemma \ref{lemma2} are both independent of $t_0$ as well as $\omega$.  This prompts us to define the following categories:
\begin{itemize}
\item $\Tc^l$, the extension closure of all coherent sheaves satisfying condition (c) in Lemma \ref{lemma2}.
\item $\Fc^l$, the extension closure of all coherent sheaves satisfying condition (c) in Lemma \ref{lemma1}.
\end{itemize}
We also define the extension closure
\begin{equation}\label{eq56}
  \Bl = \langle \Fc^l [1], \Tc^l \rangle.
\end{equation}

\begin{remark}\label{remark14}
Immediate properties of $\Tl, \Fl$ and $\Bl$ include:
 \begin{itemize}
 \item[(i)] The category of torsion sheaves on $X$ is contained in $\Tl$.  This follows from \eqref{eq61} and the fact that  every torsion sheaf is contained in $\Tw$ for any polarisation $\omega$.
 \item[(ii)] Every nonzero object in $\Fl$ is a torsion-free sheaf.  This follows from \eqref{eq61} and the fact that every nonzero object in $\Fw$ is a torsion-free sheaf for any polarisation $\omega$.
 \item[(iii)] $W_{0,X} \subseteq \Tl$.  To see this, take any $E \in W_{0,X}$.  Since  $W_{0,X} \subseteq W_{0,X}'$ \cite[Lemma 3.6]{Lo11},  we know  each $\mu_f$-HN factor of $E$ has $\mu_f>0$ from \cite[Proposition 3.3]{LZ2}.  As a result, any quotient sheaf $E'$ of $E$ satisfies $\mu_f (E') >0$.  By  Lemma \ref{lemma2}(c), this means $E \in \Tl$.
 \item[(iv)] $\ch_{10}(E) \geq 0$ for any $E \in \Bl$.  To see this, observe that for any $E' \in \Fl$ we have $\ch_{10}(E') \leq 0$ by Lemma \ref{lemma1}(c), while for any $E'' \in \Tl$ we have $\ch_{10}(E'') \geq 0$ by Lemma \ref{lemma2}(c).  Subsequently,  for any $E \in \Bl$ we have $\ch_{10}(E) \geq 0$.
 \item[(v)] $\Fl \subset W_{1,X}$.  Since $(W_{0,X}, W_{1,X})$ and $(\Tl, \Fl)$ are both torsion pairs in $\Coh (X)$ (see Lemma \ref{lemma3} below), this follows immediately from (iii).
 \item[(vi)] It is straightforward to check that  arguments similar to those in the proofs of Lemmas \ref{lemma1} and \ref{lemma2} give  the following equivalent descriptions of $\Tl, \Fl$ for any fixed $\alpha >0$:
\begin{align}
  \Tc^l &= \{ F \in \Coh (X) : F \in \Tw \text{ for $s \gg 0, ts = \alpha$}\};\notag\\
  \Fc^l &= \{ F \in \Coh (X) : F \in \Fw \text{ for $s \gg 0, ts = \alpha$}\}. \label{eq61}
\end{align}
More concretely, by Lemmas \ref{lemma1} and \ref{lemma2}, an object $E \in D^b(X)$ lies in $\Bl$ (resp.\ $\Tc^l, \Fc^l$) if and only if $E$ lies in $\Bw$ (resp.\ $\Tw, \Fw$) for all $s>s_0, ts= \alpha$, for some $s_0>0$.  That is, for any fixed $\alpha >0$, we can interpret  $\Bl$ (resp.\ $\Tc^l, \Fc^l$) as the `limit' of the category $\Bw$ (resp.\ $\Tw, \Fw$) as $s \to +\infty$ along the positive branch of the hyperbola  $ts=\alpha$.
\end{itemize}
\end{remark}

In fact, the effective bound $s_0$ in Remark \ref{remark14}(vi) only depends on the Chern classes of $E$.  This observation is needed if one tries to show, using Theorem \ref{theorem2}, that $\Phi$ induces isomorphisms between moduli spaces of slope stable sheaves and moduli spaces of tilt stable objects:

\begin{lemma}\label{lemma31}
Let $\alpha >0$ be fixed, and write $\omega = tH + sD$.  Suppose $B$ is a scheme of finite type, and $\mathcal{E} \in D^b(X \times B)$ is a $B$-flat family of limit tilt stable objects in $\Bl$.  Then there exists $M>0$ such that, for all $s>M, ts = \alpha$ and $b \in B$, we have $\mathcal{E}_b \in \Bw$.
\end{lemma}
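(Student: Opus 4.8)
The plan is to reduce the statement to a uniform estimate on the cohomology sheaves of the fibers, and then to feed the boundedness of the family $\mathcal E$ into the effective bounds coming from Lemmas \ref{lemma1} and \ref{lemma2}. Since $\mathcal E_b \in \Bl = \langle \Fl[1], \Tl\rangle$ for every $b$, writing $F_b = H^{-1}(\mathcal E_b)$ and $T_b = H^0(\mathcal E_b)$ we have $F_b \in \Fl$, $T_b \in \Tl$, and $H^i(\mathcal E_b)=0$ for $i \neq -1,0$. Because $\Bw = \langle \Fw[1], \Tw\rangle$, to conclude $\mathcal E_b \in \Bw$ it suffices to show $F_b \in \Fw$ and $T_b \in \Tw$. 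Thus the whole task is to produce a single $M>0$, independent of $b$, such that $F_b \in \Fw$ and $T_b \in \Tw$ for all $s>M$ with $ts=\alpha$.

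Next I would make the bounds of Lemmas \ref{lemma1} and \ref{lemma2} explicit along the hyperbola $ts=\alpha$. Substituting $t=\alpha/s$ into \eqref{eq6}, for any subsheaf $A \subseteq F_b$ one gets $\mu_\omega(A) = 4h\alpha\,\mu^\ast(A) + 2hs^2\mu_f(A)$. Using $\mu^\ast(A) \leq \mu^\ast_{\max}(F_b)$ and, when $\mu_f(A)<0$, the bound $\mu_f(A) \leq -c/\rk(F_b)$ (with $c=1$ here), this expression is negative once $s^2 > \max\{0,\, 2\alpha\,\rk(F_b)\,\mu^\ast_{\max}(F_b)/c\}$; when $\mu_f(A)=0$ the defining condition of $\Fl$ forces $\mu^\ast(A)\leq 0$ and the estimate holds for every $s$. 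Hence $F_b \in \Fw$ for $s>s_1(F_b)$, where $s_1(F_b)$ depends only on $\alpha$, $\rk(F_b)$ and $\mu^\ast_{\max}(F_b)$, in the spirit of Remark \ref{remark9}. A symmetric computation with quotients of $T_b$ yields $T_b \in \Tw$ for $s>s_2(T_b)$, with $s_2(T_b)$ depending only on $\alpha$, $\rk(T_b)$ and $\mu^\ast_{\min}(T_b)$.

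The remaining and decisive point is to bound these quantities uniformly over $b \in B$. Since $B$ is of finite type and $\mathcal E \in D^b(X\times B)$ is $B$-flat, the fibers $\{\mathcal E_b\}_{b\in B}$ form a bounded family in $D^b(X)$, and therefore the cohomology sheaves $\{F_b\}_{b\in B}$ and $\{T_b\}_{b\in B}$ form bounded families of coherent sheaves on $X$. In particular the ranks $\rk(F_b),\rk(T_b)$ take finitely many values. Moreover, by Grothendieck's boundedness lemma applied to the slope-like function $\mu^\ast$, the Chern characters of all $\mu^\ast$-HN factors of the members of a bounded family again lie in a bounded (hence finite) set, so that $\mu^\ast_{\max}(F_b)$ and $\mu^\ast_{\min}(T_b)$ are bounded above and below, respectively, uniformly in $b$. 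Consequently $M := \sup_{b\in B}\max\{s_1(F_b), s_2(T_b)\}$ is finite, and for every $s>M$ with $ts=\alpha$ and every $b \in B$ we obtain $F_b \in \Fw$ and $T_b \in \Tw$, hence $\mathcal E_b \in \Bw$.

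The main obstacle is precisely the boundedness input of the last paragraph: one must verify that passing from the $B$-flat family $\mathcal E$ to the cohomology sheaves of its fibers preserves boundedness, and that $\mu^\ast_{\max}$ and $\mu^\ast_{\min}$—which are \emph{not} determined by the Chern character alone—are nevertheless uniformly controlled. The first is a standard property of bounded families of complexes over a finite-type base (using that $X$ is projective), while the second is exactly the content of Grothendieck's lemma for the slope-like function $\mu^\ast$. The limit tilt stability of the $\mathcal E_b$ enters only to guarantee that we are dealing with an honest bounded family; the rest is the elementary hyperbola estimate already encoded in Lemmas \ref{lemma1} and \ref{lemma2}.
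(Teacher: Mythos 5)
Your overall strategy coincides with the paper's: pass to the cohomology sheaves $H^{-1}(\mathcal{E}_b)\in\Fl$ and $H^0(\mathcal{E}_b)\in\Tl$ (the paper does this via a flattening stratification), extract the explicit thresholds $s^2 \geq 2\alpha\,\rk\cdot|\mu^\ast_{\max}|/c$ and $s^2 > 2\alpha\,\rk\cdot|\mu^\ast_{\min}|/c$ from Lemmas \ref{lemma1} and \ref{lemma2}, and then reduce everything to the uniform boundedness of $\rk$, $\mu^\ast_{\max}$ and $\mu^\ast_{\min}$ over $b\in B$. Up to that point your argument matches the paper's.

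The gap is in the decisive step, which you correctly identify as the crux but then dispose of by citing ``Grothendieck's boundedness lemma applied to the slope-like function $\mu^\ast$.'' Grothendieck's lemma, and more generally the boundedness of maximal destabilizing subsheaves and of HN factors in a flat family, is a statement about slopes with respect to an \emph{ample} polarisation. Here $\mu^\ast$ is the slope associated to the divisor class $H=d_{10}$ (suitably paired with $D$), which is nef but not ample, so neither Grothendieck's lemma nor the relative HN machinery applies to $\mu^\ast$ directly; in particular your claim that the $\mu^\ast$-HN factors of a bounded family have Chern characters in a finite set is exactly what needs proof, not a citation. The paper closes this gap with an extra idea you do not supply: it writes $\mu^\ast$ as a difference of two honest ample slopes, $\mu^\ast(F)=\mu_{2H+mD}(F)-\mu_{H+mD}(F)$ for any $m>0$, and then uses the existence of relative HN filtrations and a flattening stratification for each of the ample classes $2H+mD$ and $H+mD$ to bound $\mu_{2H+mD,\max/\min}(\mathcal{F}_b)$ and $\mu_{H+mD,\max/\min}(\mathcal{F}_b)$ uniformly, from which the uniform bounds on $\mu^\ast_{\max}(\mathcal{F}_b)$ and $\mu^\ast_{\min}(\mathcal{F}_b)$ follow. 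Without this (or some substitute argument for the non-ample slope $\mu^\ast$), your proof is incomplete at precisely the point where all the work lies.
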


\begin{proof}
Suppose $ts = \alpha$.  It is easy to check the following (see the proofs of Lemmas \ref{lemma1} and \ref{lemma2}):
\begin{itemize}
\item If $F \in \Fl$, then $F \in \Fw$ for any $s>0$ satisfying
\[
  s^2 \geq \frac{4h\alpha |\mu^\ast_{max}(F)|}{2h} \cdot \frac{\rank (F)}{c}.
\]
\item If $F \in \Tl$, then $F \in \Tw$ for any $s>0$ satisfying
\[
  s^2 > \frac{4h\alpha |\mu^\ast_{min}(F)|}{2h} \cdot \frac{\rank (F)}{c}.
\]
\end{itemize}
Applying a flattening stratification to the standard cohomology of $\mathcal{E}$ (which are coherent sheaves on $X \times B$) with respect to the morphism $X \times B \to B$, and keeping in mind the two inequalities above, the lemma would follow if we can show the following: given a $B$-flat family $\mathcal{F}$ of coherent sheaves on $X$ where $B$ is of finite type, $\mu^\ast_{max}(\mathcal{F}_b)$ and $\mu^\ast_{min}(\mathcal{F}_b)$ are bounded for $b \in B$.

For any $m>0$, both $2H+mD$ and $H+mD$ are  ample divisors on $X$.  By the existence of relative HN filtration for $\mathcal{F}$ and a flattening stratification, the slopes
\[
  \mu_{2H+mD, max}(\mathcal{F}_b), \,\, \mu_{2H+mD, min}(\mathcal{F}_b), \,\,\mu_{H+mD, max}(\mathcal{F}_b), \,\, \mu_{H+mD, min}(\mathcal{F}_b)
\]
are all bounded for $b \in B$.  Since  we can rewrite $\mu^\ast (F)$ as
\[
  \mu^\ast (F) = \frac{H\cdot c_1(F)}{\rank (F)} = \frac{(2H+mD)\cdot c_1(F)}{\rank (F)} - \frac{(H+mD)\cdot c_1(F)}{\rank (F)}= \mu_{2H+mD}(F) - \mu_{H+mD}(F),
\]
it follows that
\[
\mu^\ast_{max} (\mathcal{F}_b), \,\, \mu^\ast_{min} (\mathcal{F}_b)
\]
are both  bounded for $b \in B$ for any $m>0$.  The lemma thus follows.
\end{proof}

\subsection{Definition of limit tilt stability on  $\Bl$}

 Since $\Bw$ is the heart of a t-structure on $D^b(X)$ for each $s,t>0$, we would hope that the category $\Bl$ is also the heart of a t-structure on $D^b(X)$.  This is indeed the case:

 \begin{lemma}\label{lemma3}
 $(\Tl,\Fl)$ is a torsion pair in $\Coh (X)$, and the category $\Bl$ is the heart of a t-structure on $D^b(X)$.
 \end{lemma}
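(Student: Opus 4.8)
The plan is to establish the torsion pair property for $(\Tl, \Fl)$ in $\Coh(X)$, after which the heart statement follows from the standard tilting construction recalled in the subsection on torsion $n$-tuples. For the torsion pair, I must verify the two defining axioms: the vanishing $\Hom_{\Coh(X)}(T,F)=0$ for $T \in \Tl$, $F \in \Fl$, and the existence of a short exact sequence $0 \to T \to E \to F \to 0$ for every $E \in \Coh(X)$ with $T \in \Tl$, $F \in \Fl$.

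For the Hom-vanishing, I would exploit the characterisation in \eqref{eq61}: each $T \in \Tl$ lies in $\Tw$ and each $F \in \Fl$ lies in $\Fw$ for all sufficiently large $s$ along $ts = \alpha$. Since $(\Tw, \Fw)$ is a genuine torsion pair in $\Coh(X)$ for every such $\omega$, we have $\Hom(T,F)=0$ for those $\omega$; but this Hom-group is computed in $\Coh(X)$ and does not depend on $\omega$, so the vanishing holds unconditionally. This reduces the first axiom to the fact that the defining conditions of $\Tl$ and $\Fl$ can be realised simultaneously for a common large $s$, which is immediate since for any fixed pair $T,F$ we may take the maximum of the two thresholds $s_0$ coming from Lemmas \ref{lemma1} and \ref{lemma2}.

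For the filtration axiom, I would take an arbitrary $E \in \Coh(X)$ and, rather than trying to produce the filtration directly, invoke the criterion that in a noetherian abelian category a subcategory closed under quotients and extensions is automatically a torsion class, as stated in the excerpt via \cite[Lemma 1.1.3]{Pol}. Thus it suffices to show that $\Tl$ is closed under quotients and extensions in $\Coh(X)$: closure under extensions is built into the definition of $\Tl$ as an extension closure, and closure under quotients follows from condition (c) of Lemma \ref{lemma2}, since any quotient of a sheaf all of whose quotients satisfy the $\mu_f$/$\mu^\ast$ positivity condition again has only such quotients. Then $\Fl = \Tl^\circ$ is the corresponding torsion-free class, and I would check that this intersection-free description of $\Fl^\circ$ agrees with the extension closure of sheaves satisfying condition (c) of Lemma \ref{lemma1}, again using \eqref{eq61} to match the two descriptions.

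The main obstacle I anticipate is precisely this last identification: confirming that $\Tl^\circ$, the torsion-free class canonically produced from the torsion class $\Tl$, coincides with the independently-defined $\Fl$. One inclusion is formal from Hom-vanishing, but the reverse requires showing that any sheaf with no subobject in $\Tl$ actually satisfies condition (c) of Lemma \ref{lemma1}; here I would argue by contradiction, extracting from a violating subsheaf a nonzero sub- or quotient sheaf lying in $\Tl$ via the $\mu_f$- and $\mu^\ast$-Harder--Narasimhan filtrations, whose existence is guaranteed since these are slope-like functions on the noetherian category $\Coh(X)$. Once $(\Tl,\Fl)$ is confirmed to be a torsion pair, the heart claim is immediate: tilting the noetherian abelian category $\Coh(X)$ at any torsion pair yields the heart $\langle \Fl[1], \Tl\rangle = \Bl$ of a t-structure on $D^b(X)$.
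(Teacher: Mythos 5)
Your overall architecture matches the paper's: both arguments obtain $(\Tl,(\Tl)^\circ)$ for free from noetherianity of $\Coh(X)$ and closure of $\Tl$ under quotients and extensions, handle the Hom-vanishing by choosing a common threshold $s_0$ so that $T\in\Tw$ and $F\in\Fw$ simultaneously, and reduce everything to the identification $(\Tl)^\circ\subseteq\Fl$. But that identification is where essentially all the work lies, and your one-sentence sketch of it has a genuine gap. First, extracting a \emph{quotient} sheaf of the violating subsheaf $A$ lying in $\Tl$ is useless: only a subsheaf of $E''$ lying in $\Tl$ contradicts $E''\in(\Tl)^\circ$. Second, and more seriously, in the case $\mu_f(A)=0$ and $\mu^\ast(A)>0$ (the case $\mu_f(A)>0$ is indeed disposed of by passing to the first $\mu_f$-HN factor, whose every quotient has $\mu_f>0$), taking $\mu_f$- and $\mu^\ast$-HN factors separately does not produce a subsheaf in $\Tl$: membership in $\Tl$ requires that \emph{every} quotient $Q$ with $\mu_f(Q)=0$ satisfy $\mu^\ast(Q)>0$, whereas the first $\mu^\ast$-HN factor of $A$ need not have $\mu_f=0$, and $A$ itself may well admit quotients with $\mu_f=0$ and $\mu^\ast\le 0$. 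One can repair this with a carefully iterated (lexicographic in $(\mu_f,\mu^\ast)$) HN construction, but that requires its own existence argument and is not what your sketch supplies.

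The paper closes this gap by a different mechanism worth noting. It works with the negation of condition (a) rather than (c): it takes the torsion parts $A_i$ of $E''$ with respect to $(\Tw,\Fw)$ along a divergent sequence $s_i$, uses $W_{0,X}\subseteq\Tl$ to place $E''$ in $W_{1,X}$ and hence force $\mu_f(A_i)\le 0$, plays the boundedness of $\mu^\ast_{\max}(E'')$ against $s_i\to\infty$ to force $\mu_f(A_i)=0$ and then $\mu_f$-semistability of $A_i$, and finally exploits the concrete membership $A_i\in\Tw$ at the single finite value $s=s_i$ to deduce $\mu^\ast(Q)>0$ for every quotient $Q$ with $\mu_f(Q)=0$, whence $A_i\in\Tl$. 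That last step --- upgrading ``$A_i$ is a torsion part at one finite $s_i$'' to the universally quantified condition of Lemma \ref{lemma2}(c) --- is the idea missing from your proposal.
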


 Let us write $(\Tl)^\circ := \{ F \in \Coh (X) : \Hom_{\Coh (X)}(E,F)=0 \text{ for all }E \in \Tl\}$.

 \begin{proof}
 It is clear from Lemma \ref{lemma2}(a) that $\Tl$ is closed under extensions and quotients in $\Coh (X)$.  Since $\Coh (X)$ is noetherian, we know  $(\Tl, (\Tl)^\circ)$ is a torsion pair in $\Coh (X)$ by \cite[Lemma 1.1.3]{Pol}.  We now show that $(\Tl, \Fl)$ satisfies the two axioms of a torsion pair; this will imply $\Fl = (\Tl)^\circ$, and the lemma will follow.

 Let us fix a real number $\alpha >0$ and set $ts  =\alpha$. Take any $E \in \Coh (X)$.  We have the $(\Tl, (\Tl)^\circ)$-decomposition of $E$
 \begin{equation}\label{eq9}
   0 \to E' \to E \to E'' \to 0.
 \end{equation}
 By Lemma \ref{lemma2}(a), we know  there exists $s_0 >0$ such that $E' \in \Tw$ for all $s > s_0$.  Also, since $\Tl$ contains all torsion sheaves on $X$, $E''$ must be torsion-free if nonzero.

 Suppose  $E'' \notin \Fl$.  Then $E''$ is nonzero,  and there exists a divergent, strictly  increasing sequence $(s_i)_{i \geq 1}$ of real numbers  such that $s_i > s_0$ for all $i \geq 1$, and  $E'' \notin \Fw$ for $s=s_i$ for all $i\geq 1$.  In particular, for each $i \geq 1$, the sheaf $E''$ has a nonzero subsheaf $A_i$ lying in $\Tw$ for $s=s_i$.

 For each $i$, that $A_i \in \Tw$ for $s=s_i$  implies
 \begin{equation}\label{eq7}
 0< \mu_\omega(A_i) = 4h\alpha\mu^\ast (A_i) + 2hs^2 \mu_f (A_i) \text{\quad for $s=s_i$}.
 \end{equation}
On the other hand, since $W_{0,X} \subseteq \Tl$, we have $(\Tl)^\circ \subseteq (W_{0,X})^\circ = W_{1,X}$.  Thus $E'' \in W_{1,X}$, and so $A_i \in W_{1,X}$ for all $i$, implying
\begin{equation}\label{eq8}
  \mu_f (A_i) \leq 0  \text{\quad for all $i$}.
\end{equation}

Suppose $(s_i)$ has a subsequence $(s_{i_j})_{j \geq 1}$ such that, for each $j$, we have $\mu_f (A_{i_j})< 0$.  Then the inequality \eqref{eq7} implies
\[
  \mu^\ast (A_{i_j}) > \frac{2hs_{i_j}^2 |\mu_f (A_{i_j})|}{4h\alpha} = \frac{s_{i_j}^2}{2\alpha} |\mu_f (A_{i_j})| \text{\quad for each $j$},
\]
where
\[
  |\mu_f (A_{i_j})| \geq \frac{c}{\rk (E'')} >0 \text{\quad for each $j$}.
\]
Hence $\mu^\ast (A_{i_j}) \to \infty$ as $j \to \infty$.  However, each $A_{i_j}$ is a  subsheaf of the nonzero torsion-free sheaf $E''$, which implies  $\mu^\ast (A_{i_j}) \leq \mu^\ast_{\max} (E'') < \infty$ for all $j$ \cite[Section 5.2]{LZ2}, which is a contradiction. Therefore, by omitting a finite number of terms if necessary, we can assume that $\mu_f (A_i)=0$ for all $i$.

Fix any $i$, and consider the $\mu_f$-HN filtration of $A_i$.  Since $A_i \in W_{1,X}$, we have $A_i \in W'_{1,X}$ by \cite[Lemma 3.16(ii)]{Lo11}.  This implies that the fiber $A_i |_s$ is $\Phi_s$-WIT$_1$ for a general closed point $s \in S$, and so all the $\mu_f$-HN factors of $A_i$ have $\mu_f   \leq 0$ \cite[Corollary 3.29]{FMNT}. Together with $\mu_f (A_i) =0$, this implies that there is only one factor in the $\mu_f$-HN filtration of $A_i$, i.e.\ $A_i$ is $\mu_f$-semistable.  As a result, for any nonzero quotient sheaf $Q$ of $A_i$,  there are two cases:
\begin{itemize}
\item[(i)]  $\mu_f (Q)>0$;
\item[(ii)] $\mu_f (Q)=0$.  In this case, we have  $Q \in \Tw$ since $A_i \in \Tw$.  Then $\mu_\omega (Q)=4h\alpha\mu^\ast (Q)>0$, implying $\mu^\ast (Q)>0$.
\end{itemize}
This shows that $A_i \in \Tl$, contradicting the maximality of $E'$.  Thus $E''$ must lie in $\Fl$.  We have now shown that every $E \in \Coh (X)$  fits in a short exact sequence of the form \eqref{eq9} where $E' \in \Tl$ and $E'' \in \Fl$.

To finish the proof, note that for any $E \in \Tl$ and any $F \in \Fl$, we can find some $s_0>0$ such that $E \in \Tw$ and $F \in \Fw$ for any $s>s_0$ by Lemmas \ref{lemma1} and \ref{lemma2}.  Then $\Hom (E,F)=0$.  This shows that $(\Tl, \Fl)$ is a torsion pair in $\Coh (X)$, i.e.\ $\Bl$ is the heart of a t-structure on $D(X)$.
 \end{proof}

\begin{lemma}\label{lemma4}
Fix any $\alpha >0$.  For any $E \in \Bl$, we have
\[
  \oZw (E)  \in -i\mathbb{H}_0 \text{ for $s \gg 0, ts =\alpha$}.
\]
\end{lemma}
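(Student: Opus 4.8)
The plan is to reduce the statement to the positivity properties of $\oZw$ on the tilted heart $\Bw$, using the ``limit'' description of $\Bl$ recorded in Remark \ref{remark14}(vi). The starting point is to unwind the target region: unravelling the definition of $\mathbb{H}_0$ shows that $-i\mathbb{H}_0$ is the open right half-plane together with the non-negative imaginary axis, so that $\oZw(E) \in -i\mathbb{H}_0$ is equivalent to the two conditions $\Re\,\oZw(E) \geq 0$ and $\bigl(\Re\,\oZw(E) = 0 \Rightarrow \Im\,\oZw(E) \geq 0\bigr)$. By the defining formula \eqref{eq2} (with $B=0$), these are respectively $\tfrac{\omega^2}{2}\ch_1(E) \geq 0$ and the implication ``$\omega\ch_2(E) - \tfrac{\omega^3}{6}\ch_0(E) \geq 0$ whenever $\omega^2\ch_1(E)=0$''.

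Next I would invoke Remark \ref{remark14}(vi): since $E \in \Bl$, there is some $s_0 > 0$ with $E \in \Bw$ for all $s > s_0$ satisfying $ts = \alpha$. For each such $s$ the divisor $\omega = tH + sD$ (with $t = \alpha/s > 0$) is ample, so $\Bw$ is the genuine tilted heart considered earlier, and the positivity properties recorded there (from \cite[Lemma 3.2.1]{BMT1}) apply verbatim to $E$. This is exactly the pair of inequalities identified in the previous paragraph, and hence $\oZw(E) \in -i\mathbb{H}_0$ for every $s > s_0$ with $ts = \alpha$, which is precisely the meaning of the phrase ``for $s \gg 0, ts = \alpha$''.

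I do not expect a genuine obstacle here; the only points needing care are the bookkeeping that matches the geometry of $-i\mathbb{H}_0$ against the real and imaginary parts of \eqref{eq4}, and the observation that each $\omega = tH+sD$ with $t,s>0$ is ample so that the positivity of \cite[Lemma 3.2.1]{BMT1} is available. As a cross-check one could instead argue directly from the Laurent-polynomial expression \eqref{eq4}, namely $\oZw(E) = (2h\alpha a_{01} + hs^2 a_{10}) + i(ta_{02} + 2hsa_{11} - h\alpha s a_{00})$, tracking its leading behaviour in $s$ and using $a_{10} = \ch_{10}(E) \geq 0$ for $E \in \Bl$ from Remark \ref{remark14}(iv); however this forces a case analysis on the vanishing of the top coefficients of the real part that is exactly subsumed by the positivity above, so I would present the first route and only mention the direct computation as a remark.
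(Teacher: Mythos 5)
Your proof is correct and follows essentially the same route as the paper: the paper's own proof simply notes that $E \in \Bl$ implies $E \in \Bw$ for all $s > s_0$ (citing Lemmas \ref{lemma1} and \ref{lemma2}, which is what Remark \ref{remark14}(vi) packages), and then concludes $\oZw(E) \in -i\mathbb{H}_0$ from the positivity properties of \cite[Lemma 3.2.1]{BMT1} on $\Bw$. Your unwinding of the region $-i\mathbb{H}_0$ and the matching against the real and imaginary parts of $\oZw$ is accurate, just spelled out in more detail than the paper bothers to.
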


\begin{proof}
Take any $E \in \Bl$.  From Lemmas \ref{lemma1} and \ref{lemma2}, we know  there exists some $s_0>0$ such that $E \in \Bw$ for any $s>s_0$.   Hence for any $s>s_0$, we have  $\oZw (E) \in -i\mathbb{H}_0$.
\end{proof}

As a result of Lemma \ref{lemma4},  $\oZw$ is a polynomial stability function on $\Bl$ in the sense of Bayer \cite{BayerPBSC}, provided we set $ts=\alpha$ for some real number $\alpha>0$ and treat $\oZw$ as a Laurent polynomial in $s$.

For any object $E \in \Bl$, we can now define the polynomial phase function $\phi (E)$ by requiring $-\tfrac{1}{2} < \phi (E) \leq \tfrac{1}{2}$ for $s \gg 0$ and
\[
  \begin{cases} \oZw (E) \in \mathbb{R}_{>0} e^{i\pi \phi (E)} &\text{ if $\oZw (E) \neq 0$ for $s \gg 0$}, \\
  \phi (E) = \tfrac{1}{2} &\text{ if $\oZw (E)=0$}.
  \end{cases}
\]
We say an object $E \in \Bl$ is $\nu^l$-semistable or limit tilt semistable (resp.\ $\nu^l$-stable or limit tilt stable) if, for every short exact sequence in $\Bl$
\[
0 \to A \to E \to B \to 0
\]
where $A,B \neq 0$, we have
\[
  \phi (A) \leq (\text{resp.} <)\, \phi (B) \text{ for $s \gg 0$}.
\]

For objects $A, B \in \Bl$, we will often write $\phi (A) \preceq \phi (B)$ (resp.\ $\phi (A) \prec \phi (B)$) to mean $\phi (A) \leq \phi (B)$ for $s \gg 0$ (resp.\ $\phi (A) < \phi (B)$ for $s \gg 0$).

\subsection{Decomposing $\Bl$}\label{sec-generation2}

In this section, we will refine the torsion pair in  \eqref{eq56}.  Let us set
\begin{align*}
\Tc^{l,+} &:= \langle F \in \Tl : F \text{ is $\mu_f$-semistable}, \infty > \mu_f (F) > 0\rangle;\\
\Tc^{l,0} &:= \{  F \in \Tl : F \text{ is $\mu_f$-semistable}, \mu_f(F)=0\};\\
\Fc^{l,0} &:= \{ F \in \Fl : F \text{ is $\mu_f$-semistable}, \mu_f(F)=0\}; \\
\Fc^{l,-} &:= \langle F \in \Fl : F \text{ is $\mu_f$-semistable}, \mu_f(F)<0\rangle.
\end{align*}

\begin{remark}\label{remark11}
The following properties of $\Tc^{l,+}, \Tc^{l,0}, \Fc^{l,0}, \Fc^{l,-}$ are immediate:
\begin{itemize}
\item[(i)] Any $F \in \Tl$ has a filtration $F'' \subseteq F' \subseteq F$ where $F'' \in\Coh^{\leq 2}(X)$, $F'/F'' \in \Tc^{l,+}$  and $F/F' \in \Tc^{l,0}$, while any $F \in \Fl$ has a filtration $F' \subseteq F$ where $F' \in \Fc^{l,0}$ and $F/F' \in \Fc^{l,-}$.  We obtain these filtrations by considering the $\mu_f$-HN filtration of $F$.
\item[(ii)] Any $F \in \Tc^{l,0}$ has $\mu^\ast (F)>0$ while any $F \in \Fc^{l,0}$ has $\mu^\ast (F)\leq 0$.
\item[(iii)] Any $F$ in $\Tc^{l,0}$ or $\Fc^{l,0}$ is $\Phi$-WIT$_1$.  To see this, note that any such $F$ is $\mu_f$-semistable with  $\mu_f<\infty$, and so is torsion-free.  Also, $F \in W_{1,X}'$ by \cite[Corollary 3.29]{FMNT}.  These two conditions together imply $F \in W_{1,X}$ by \cite[Lemma 3.18]{Lo11}.
\end{itemize}
\end{remark}

\begin{remark}\label{remark15}
 Since $W_{0,X}$ is contained in $\Tl$ by Remark \ref{remark14}(iii), the heart $\wh{\Coh(X)}$ is the tilt of $\Bl$ at the torsion pair $(\mathcal T, \mathcal F)$ where
\begin{align*}
    \mathcal T &= \langle \Fl [1], W_{0,X}\rangle, \\
    \mathcal F &=  W_{1,X} \cap \Tl
\end{align*}
by \cite[Proposition 2.3.2]{BMT1}.  From this torsion pair $(\Tc, \Fc)$ in $\Bl$, we see \begin{equation}\label{eq57}
\Phi \Bl \subset D^{[0,1]}_{\Coh (X)}
\end{equation}
and that  an object in $\Bl$ is taken by $\Phi$ to a coherent sheaf sitting at degree zero precisely when it lies in $\langle \Fl [1], W_{0,X}\rangle$.    Having this torsion pair in $\Bl$ implies we  have the torsion triple in $\Bl$
\begin{equation}\label{eq19}
  (\Fl [1], W_{0,X}, W_{1,X} \cap \Tl ).
\end{equation}
\end{remark}

\begin{lemma}\label{lemma7}
We have
\[
  W_{1,X} \cap \Tl = \langle W_{1,X} \cap \Coh^{\leq 2}(X), \Tc^{l,0} \rangle,
\]
and every object in this category can be written as the extension of a sheaf in $\Tc^{l,0}$ by a sheaf in $W_{1,X} \cap \Coh^{\leq 2}(X)$.  For every $F \in W_{1,X} \cap \Tl$, the transform $\wh{F}$ is a torsion sheaf.
\end{lemma}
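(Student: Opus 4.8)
The plan is to reduce the entire lemma to the single numerical fact that $\ch_{10}(F)=0$ for every $F\in W_{1,X}\cap\Tl$. Once this is known, the statement about the transform is immediate, and the structural decomposition follows by feeding $\ch_{10}(F)=0$ into the filtration of Remark~\ref{remark11}(i). So the heart of the proof will be a short computation with the cohomological Fourier--Mukai transform, while everything else is a matter of assembling already-established closure properties.

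First I would dispatch the easy inclusion $\langle W_{1,X}\cap\Coh^{\leq 2}(X),\Tc^{l,0}\rangle\subseteq W_{1,X}\cap\Tl$. Here $W_{1,X}\cap\Coh^{\leq 2}(X)\subseteq\Tl$ because every torsion sheaf lies in $\Tl$ (Remark~\ref{remark14}(i)), while $\Tc^{l,0}\subseteq W_{1,X}$ by Remark~\ref{remark11}(iii) and $\Tc^{l,0}\subseteq\Tl$ by definition. Since $W_{1,X}$ is the torsion-free class and $\Tl$ the torsion class of torsion pairs in $\Coh(X)$, both are closed under extensions, hence so is their intersection, and the inclusion follows.

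For the reverse inclusion I would take $F\in W_{1,X}\cap\Tl$ and estimate $\ch_{10}(F)$ from both sides. On one hand $F\in\Tl\subseteq\Bl$ gives $\ch_{10}(F)\geq 0$ by Remark~\ref{remark14}(iv). On the other hand $F\in W_{1,X}$ means $\Phi F\cong\wh F[-1]$ for the coherent sheaf $\wh F=\Phi^1(F)$, and the matrix formula of Section~\ref{section-matrixchern} gives $\ch_0(\wh F)=-\ch_{10}(F)$; as $\wh F$ is a genuine sheaf its rank is nonnegative, forcing $\ch_{10}(F)\leq 0$. Therefore $\ch_{10}(F)=0$, which yields $\ch_0(\wh F)=0$, i.e.\ $\wh F$ is a torsion sheaf --- this is the last assertion. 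To finish, I would invoke the filtration $F''\subseteq F'\subseteq F$ of Remark~\ref{remark11}(i), with $F''\in\Coh^{\leq 2}(X)$, $F'/F''\in\Tc^{l,+}$ and $F/F'\in\Tc^{l,0}$. By additivity of $\ch_{10}$ together with the facts that $\ch_{10}(F'')\geq 0$ (a torsion sheaf, by \cite[Lemma~5.2]{LZ2}), $\ch_{10}(F/F')=0$ (objects of $\Tc^{l,0}$ are torsion-free with $\mu_f=0$), and $\ch_{10}(F'/F'')>0$ whenever $F'/F''\neq 0$ (objects of $\Tc^{l,+}$ have positive rank and $\mu_f>0$), the vanishing $\ch_{10}(F)=0$ forces $F'/F''=0$. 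Hence $F'=F''$, and the sequence $0\to F''\to F\to F/F''\to 0$ exhibits $F$ as an extension of $F/F''=F/F'\in\Tc^{l,0}$ by $F''$; here $F''\in\Coh^{\leq 2}(X)$ is a subsheaf of $F\in W_{1,X}$, and so lies in $W_{1,X}$ since the torsion-free class is closed under subobjects.

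The hard part is not any single computation but recognizing the right invariant to track: the two memberships $F\in\Tl$ and $F\in W_{1,X}$ constrain $\ch_{10}(F)$ with opposite signs, and it is exactly this clash that collapses the three-step filtration of Remark~\ref{remark11}(i) down to the two-step extension claimed. Once $\ch_{10}(F)=0$ is secured, no further input about $\Phi$ is needed, so I expect the only place requiring genuine care to be the sign bookkeeping in the transform computation $\ch_0(\wh F)=-\ch_{10}(F)$.
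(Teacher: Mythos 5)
Your proposal is correct and follows essentially the same route as the paper: both arguments squeeze $\ch_{10}(F)$ to zero from the opposing constraints imposed by $F\in\Tl$ (via $\Bl$, giving $\ch_{10}\geq 0$) and $F\in W_{1,X}$ (giving $\ch_{10}\leq 0$), and then use this vanishing to collapse the $\mu_f$-HN-based filtration to the claimed two-step extension and to conclude $\ch_0(\wh F)=0$. The only cosmetic difference is that the paper works with the maximal torsion subsheaf and the $\mu_f$-HN factors of the torsion-free quotient directly, whereas you route the same computation through the three-step filtration of Remark~\ref{remark11}(i); you also spell out the easy reverse inclusion, which the paper leaves implicit.
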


\begin{proof}
Take any $F \in W_{1,X} \cap \Tl$.  Let $T$ be the maximal torsion subsheaf of $F$.  Then $T \in W_{1,X} \cap \Coh^{\leq 2}(X)$ while $F/T$ is torsion-free and lies in $\Tl$.  By the definition of $\Tl$, all the $\mu_f$-HN factors of $F/T$ have $\ch_{10} \geq 0$.  On the other hand, that $F \in W_{1,X}$ implies $\ch_{10}(F) \leq 0$ by \cite[Remark 5.17]{LZ2}.  Since $\ch_{10}(T)=0$, all the $\mu_f$-HN factors of $F/T$ must have $\ch_{10}=0$, i.e.\ $F/T$ is $\mu_f$-semistable with $\ch_{10}=0$, i.e.\ $F/T \in \Tc^{l,0}$.  This proves the first half of the lemma.

Note that $\ch_{10}(F)=0$ from the previous paragraph.  Thus $\ch_0 (\wh{F})=0$, i.e.\ $\wh{F}$ is a torsion sheaf.
\end{proof}

\begin{lemma}\label{lemma8}
We have the torsion quintuple in $\Bl$
\begin{equation}\label{eq47}
  \left(\Fc^{l,0}[1],\,\, \Fc^{l,-}[1],\,\, \Coh^{\leq 2}(X),\,\, \Tc^{l,+},\,\, \Tc^{l,0} \right).
\end{equation}
\end{lemma}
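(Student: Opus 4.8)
The plan is to verify the two defining axioms of a torsion quintuple directly, building on the torsion pair $(\Fl [1], \Tl)$ that underlies the tilt $\Bl$ together with the refinements recorded in Remark \ref{remark11}(i).

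For the filtration axiom I would start from an arbitrary $E \in \Bl$ and use that $(\Fl [1], \Tl)$ is a torsion pair in $\Bl$ (the defining torsion pair of the tilt, by Lemma \ref{lemma3}): this yields the canonical short exact sequence $0 \to H^{-1}(E)[1] \to E \to H^0(E) \to 0$ in $\Bl$ with $H^{-1}(E) \in \Fl$ and $H^0(E) \in \Tl$. On the torsion-free sheaf $G := H^{-1}(E)$, Remark \ref{remark11}(i) supplies a subsheaf $G' \in \Fc^{l,0}$ with $G/G' \in \Fc^{l,-}$; since all three of $G', G, G/G'$ lie in $\Fl$, shifting $0 \to G' \to G \to G/G' \to 0$ by $[1]$ produces a short exact sequence in $\Bl$, giving the first two steps $G'[1] \subseteq G[1]$ with successive quotients in $\Fc^{l,0}[1]$ and $\Fc^{l,-}[1]$. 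On $T := H^0(E) \in \Tl$, Remark \ref{remark11}(i) gives $T'' \subseteq T' \subseteq T$ with $T'' \in \Coh^{\leq 2}(X)$, $T'/T'' \in \Tc^{l,+}$, $T/T' \in \Tc^{l,0}$, these being sequences of sheaves inside $\Tl \subseteq \Bl$ hence exact in $\Bl$. Pulling $T'' \subseteq T' \subseteq T$ back along the epimorphism $E \twoheadrightarrow T$ in $\Bl$ and prepending $G'[1] \subseteq G[1] = H^{-1}(E)[1]$ then assembles the required five-step filtration, whose graded pieces are, in order, in $\Fc^{l,0}[1]$, $\Fc^{l,-}[1]$, $\Coh^{\leq 2}(X)$, $\Tc^{l,+}$, $\Tc^{l,0}$.

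For the orthogonality axiom I would check $\Hom_{\Bl}(C_i, C_j) = 0$ for $i<j$ in three groups. The six cross pairs, where $C_i \in \Fl [1]$ and $C_j \in \Tl$, vanish automatically: for sheaves $F, T$ one has $\Hom_{\Bl}(F[1], T) = \Hom_{D^b(X)}(F[1], T) = \Ext^{-1}_X(F,T) = 0$. The pair $(\Fc^{l,0}[1], \Fc^{l,-}[1])$ reduces to $\Hom_{\Coh (X)}(\Fc^{l,0}, \Fc^{l,-})$, which vanishes by a slope argument: both categories consist of torsion-free $\mu_f$-semistable sheaves, of slopes $0$ and $<0$ respectively, so the image of any map would be a torsion-free quotient of the source (slope $\geq 0$) and a subsheaf of the target (slope $<0$), an impossibility. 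Among the $\Tl$-pieces, $\Hom (\Coh^{\leq 2}(X), \Tc^{l,+})$ and $\Hom (\Coh^{\leq 2}(X), \Tc^{l,0})$ vanish because the targets are torsion-free (being $\mu_f$-semistable of finite slope) while the source is torsion, and $\Hom (\Tc^{l,+}, \Tc^{l,0}) = 0$ follows from the same $\mu_f$-slope argument (slope $>0$ into slope $0$).

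The argument is essentially an assembly of already-established facts, so I do not expect a serious obstacle; the points demanding the most care are bookkeeping ones. In particular I would make sure that the two refining filtrations genuinely splice into a filtration in $\Bl$ — this rests on the fact that a short exact sequence of sheaves with all terms in $\Tl$ (resp.\ in $\Fl$) remains exact in $\Bl$ after the appropriate shift, together with the standard pullback of subobjects along $E \twoheadrightarrow H^0(E)$ — and that each slope argument is applied only to torsion-free sheaves of finite $\mu_f$, which is precisely what membership in $\Tc^{l,+}, \Tc^{l,0}, \Fc^{l,0}, \Fc^{l,-}$ guarantees through $\mu_f$-semistability with finite slope (recalling from Remark \ref{remark14}(ii) that nonzero objects of $\Fl$ are torsion-free).
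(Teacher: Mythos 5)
Your proposal is correct and follows essentially the same route as the paper: the paper's proof is exactly the canonical short exact sequence $0 \to H^{-1}(E)[1] \to E \to H^0(E) \to 0$ coming from the torsion pair $(\Tl,\Fl)$, refined by the $\mu_f$-HN filtrations recorded in Remark \ref{remark11}(i). The only difference is that you spell out the splicing and the ten Hom-vanishings, which the paper leaves implicit; these checks are all sound.
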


\begin{proof}
Take any $E \in \Bl$.  We have the short exact sequence in $\Bl$
\[
0 \to H^{-1}(E)[1] \to E \to H^0(E) \to 0
\]
where $H^{-1}(E) \in \Fl$ and $H^0(E) \in \Tl$. The lemma then follows from Remark \ref{remark11}(i).
\end{proof}

\subsection{The transform $\Phi\Bw$ vs $\Bl$}

The torsion triple \eqref{eq19} in $\Bl$ implies the following torsion triple in $\Phi \Bl$:
\begin{equation}\label{eq17}
  ( \Phi \Fl [1], W_{1,X}, \Phi (W_{1,X}\cap \Tl) ).
\end{equation}
Since $\Fl \subseteq W_{1,X}$, the heart $\Phi \Bl[1]$ can be obtained by tilting $\Coh (X)$ at the torsion pair in which
\begin{itemize}
\item the torsion class is $\mathcal T_B :=  \Phi (W_{1,X} \cap \Tl) [1]$, and
\item the torsion-free class is $\mathcal T_B^\circ = \langle \Phi \Fl [1], W_{1,X}\rangle$
\end{itemize}
 by \cite[Proposition 2.3.2]{BMT1}.  Note that $\mathcal T_B$ is contained in the category of torsion  sheaves on $X$ by Lemma \ref{lemma7}.  Consequently, the category $\mathcal T_B^\circ$  contains all the torsion-free  sheaves on $X$.

\begin{lemma}\label{lemma9}
For any torion-free coherent sheaf $F$ on $X$, we have $\Phi F [1] \in \Bl$.
\end{lemma}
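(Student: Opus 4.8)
The plan is to deduce the statement directly from the tilting description of $\Phi\Bl[1]$ recorded just above, rather than computing the cohomology sheaves of $\Phi F[1]$ by hand. Recall that $\Phi\Bl[1]$ is the tilt of $\Coh (X)$ at the torsion pair whose torsion class is $\mathcal T_B = \Phi(W_{1,X}\cap\Tl)[1]$ and whose torsion-free class is $\mathcal T_B^\circ = \langle \Phi\Fl[1], W_{1,X}\rangle$, and that by Lemma \ref{lemma7} the transform of any object of $W_{1,X}\cap\Tl$ is a torsion sheaf, so $\mathcal T_B \subseteq \Coh^{\leq 2}(X)$ and hence $\mathcal T_B^\circ$ contains every torsion-free sheaf on $X$.

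First I would observe that, since $F$ is torsion-free, $F \in \mathcal T_B^\circ$: a torsion sheaf in $\mathcal T_B$ admits no nonzero morphism to the torsion-free sheaf $F$, so the defining $\Hom$-vanishing for $\mathcal T_B^\circ$ holds. Because $\mathcal T_B^\circ$ is precisely the torsion-free class of the torsion pair used to tilt $\Coh (X)$ into $\Phi\Bl[1]$, its shift $\mathcal T_B^\circ[1]$ sits inside the tilted heart. Concretely, $F[1]$ has $\mathcal H^{-1}(F[1]) = F \in \mathcal T_B^\circ$ and $\mathcal H^0(F[1]) = 0 \in \mathcal T_B$, so $F[1] \in \Phi\Bl[1]$.

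Then I would apply the equivalence $\Phi$ together with the identity $\Phi^2 = \mathrm{id}_X[-1]$. Since $\Phi$ is an exact equivalence commuting with shifts, it carries the heart $\Phi\Bl[1]$ isomorphically onto
\[
  \Phi\bigl(\Phi\Bl[1]\bigr) = \Phi^2(\Bl)[1] = \bigl(\Bl[-1]\bigr)[1] = \Bl .
\]
Applying $\Phi$ to $F[1] \in \Phi\Bl[1]$ therefore yields $\Phi(F[1]) = \Phi F[1] \in \Bl$, which is exactly the assertion.

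The only points needing care here are bookkeeping: that $\mathcal T_B^\circ$ genuinely contains all torsion-free sheaves (which rests on $\mathcal T_B \subseteq \Coh^{\leq 2}(X)$ from Lemma \ref{lemma7}), and that the shifts compose so that $\Phi(\Phi\Bl[1])$ collapses back to $\Bl$ under $\Phi^2 = \mathrm{id}_X[-1]$. A more computational alternative would bypass the tilting description: using the spectral sequence for $\Phi^2$ one checks $\Phi^0 F \in W_{1,X}$ and $\Phi^1 F \in W_{0,X}$, whence $\mathcal H^0(\Phi F[1]) = \Phi^1 F \in W_{0,X} \subseteq \Tl$ by Remark \ref{remark14}(iii). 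In that route the genuine obstacle is showing $\mathcal H^{-1}(\Phi F[1]) = \Phi^0 F \in \Fl$ by verifying the subsheaf criterion of Lemma \ref{lemma1}(c), which is precisely the work the categorical argument above avoids.
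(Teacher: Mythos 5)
Your proposal is correct and is essentially the paper's own argument: the paper likewise notes that a torsion-free sheaf lies in $\mathcal T_B^\circ \subseteq \Phi\Bl$ (using Lemma \ref{lemma7} to see $\mathcal T_B$ consists of torsion sheaves) and then concludes $\Phi F[1] \in \Phi(\Phi\Bl)[1] = \Bl$ via $\Phi^2 = \mathrm{id}_X[-1]$. You have simply spelled out the shift bookkeeping in more detail.
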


\begin{proof}
Since $F \in \mathcal T_B^\circ \subseteq \Phi \Bl$ from \eqref{eq17} and the discussion above, we have $\Phi F [1] \in \Phi (\Phi \Bl)[1] =\Bl$.
\end{proof}

\begin{proposition}\label{pro-theorem1}
  $\Phi (\Bw) \subseteq \langle \Bl, \Bl [-1]\rangle$ for any polarisation $\omega$ on $X$.
\end{proposition}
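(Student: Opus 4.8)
The plan is to exploit the defining torsion-pair filtration of $\Bw$, reduce the statement to two classes of generators, and treat each using the sharp behaviour of $\Phi$ on torsion-free sheaves (Lemma \ref{lemma9}) together with the WIT decomposition. Before starting I would record one preliminary observation, which I expect to use twice: \emph{every coherent sheaf lies in $\langle \Bl, \Bl[-1]\rangle$}. Indeed, for $G \in \Coh (X)$ the torsion pair $(\Tl, \Fl)$ of Lemma \ref{lemma3} yields a short exact sequence $0 \to G_T \to G \to G_F \to 0$ with $G_T \in \Tl \subseteq \Bl$ and $G_F \in \Fl$, so that $G_F[1] \in \Bl$, i.e.\ $G_F \in \Bl[-1]$. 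Since $\langle \Bl, \Bl[-1]\rangle$ is extension-closed by definition, $G \in \langle \Bl, \Bl[-1]\rangle$, and hence $\Coh (X) \subseteq \langle \Bl, \Bl[-1]\rangle$.

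Next, given $E \in \Bw$, I would use the canonical triangle $H^{-1}(E)[1] \to E \to H^0(E) \to H^{-1}(E)[2]$ with $H^{-1}(E) \in \Fw$ and $H^0(E) \in \Tw$, and apply the exact functor $\Phi$ to obtain a triangle $\Phi (H^{-1}(E)[1]) \to \Phi E \to \Phi (H^0(E)) \to$. Because $\langle \Bl, \Bl[-1]\rangle$ is extension-closed, it suffices to place $\Phi (H^{-1}(E)[1])$ and $\Phi (H^0(E))$ in $\langle \Bl, \Bl[-1]\rangle$ separately.

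For the $\Fw$-piece, $H^{-1}(E)$ is a torsion-free sheaf, so Lemma \ref{lemma9} gives $\Phi (H^{-1}(E))[1] \in \Bl$; since $\Phi$ commutes with the shift, $\Phi (H^{-1}(E)[1]) \in \Bl \subseteq \langle \Bl, \Bl[-1]\rangle$. For the $\Tw$-piece (indeed for any coherent sheaf $T$), I would decompose $T$ via the torsion pair $(W_{0,X}, W_{1,X})$ as $0 \to T_0 \to T \to T_1 \to 0$ and apply $\Phi$, getting a triangle $\wh{T_0} \to \Phi T \to \wh{T_1}[-1] \to \wh{T_0}[1]$. The term $\wh{T_0} = \Phi^0(T_0)$ is a coherent sheaf, hence lies in $\langle \Bl, \Bl[-1]\rangle$ by the preliminary observation. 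For the other term, the relation $\Phi^2 = \mathrm{id}_X[-1]$ forces $\wh{T_1}$ to be $\Phi$-WIT$_0$, so $\wh{T_1} \in W_{0,X} \subseteq \Tl \subseteq \Bl$ by Remark \ref{remark14}(iii); therefore $\wh{T_1}[-1] \in \Bl[-1]$. Extension-closedness then yields $\Phi T \in \langle \Bl, \Bl[-1]\rangle$, completing the argument.

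The hard part will be controlling the degree-one (standard) cohomology of $\Phi T$. A priori $\wh{T_1}[-1]$ lives only in $\Coh (X)[-1]$, which is \emph{not} contained in $\langle \Bl, \Bl[-1]\rangle$, so a naive degreewise estimate fails. The step that rescues the argument is the identification $\wh{T_1} \in W_{0,X} \subseteq \Tl$, placing it in $\Bl$ rather than requiring a further tilt; this is exactly where Remark \ref{remark14}(iii) is essential. Symmetrically, on the $\Fw$ side the delicate point is that one needs the \emph{sharp} conclusion of Lemma \ref{lemma9} (the transform lands in $\Bl$ itself), since merely knowing $\Phi(H^{-1}(E)) \in \langle \Bl, \Bl[-1]\rangle$ would, after shifting by $[1]$, only give membership in $\langle \Bl[1], \Bl\rangle$, outside the desired range.
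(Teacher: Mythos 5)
Your proof is correct, and its skeleton (split $E\in\Bw$ into its $\Fw[1]$- and $\Tw$-pieces, handle the torsion-free piece via Lemma \ref{lemma9}, and control the degree-one standard cohomology of the transform of the sheaf piece) matches the intent of the paper's argument; but the key technical input differs. The paper disposes of the delicate term by citing an external result (\cite[Lemma 2.6]{Lo7}), which says that for a torsion sheaf $T$ one has $\Phi T \in \langle \Coh(X), \Coh(\pi)_{\leq 1}[-1]\rangle$, and then uses that $\Coh(\pi)_{\leq 1}$ consists of torsion sheaves, hence lies in $\Tl \subseteq \Bl$. You instead decompose an arbitrary coherent sheaf $T$ by the torsion pair $(W_{0,X},W_{1,X})$ and observe that $\Phi^1 T = \wh{T_1}$ is automatically $\Phi$-WIT$_0$ (from $\Phi^2=\mathrm{id}[-1]$), hence lies in $W_{0,X}\subseteq\Tl\subseteq\Bl$ by Remark \ref{remark14}(iii). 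Your route is more self-contained (no appeal to \cite{Lo7}) and uniform: it handles every coherent sheaf at once, so you never need the support estimate $\Phi^1 T\in\Coh(\pi)_{\leq 1}$, only the weaker containment $W_{0,X}\subseteq\Tl$. The paper's citation buys brevity and a sharper statement about where $\Phi^1 T$ is supported, which is not actually needed for this proposition. Your diagnosis of the critical point is also accurate: the naive bound $\wh{T_1}[-1]\in\Coh(X)[-1]$ is insufficient because $\Fl[-1]\subseteq\Bl[-2]$ escapes $\langle\Bl,\Bl[-1]\rangle$, so the identification of $\wh{T_1}$ as an object of $\Tl$ (rather than merely of $\Coh(X)$) is exactly what makes the argument close.
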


\begin{proof}
Given any torsion coherent sheaf $T$, we have
\[
  \Phi T \in \langle \Coh (X), \Coh (\pi)_{\leq 1}[-1]\rangle
  \]
by \cite[Lemma 2.6]{Lo7}.  Since the category of torsion sheaves is contained in $\Bl$, we have $\Coh (\pi)_{\leq 1}[-1] \subset \Bl [-1]$.  The proposition then follows.
\end{proof}
In other words, $\Phi \Bw$ is a tilt of $\Bl$, a limit of $\Bw$.

\subsection{Phases of various objects}\label{phase-computation}

In this section, we compute the phases of particular   objects in  $\Bl$.  We will repeatedly make use of \cite[Proposition 5.13]{LZ2} without explicitly mentioning it.

\textbf{Assumption.} For the rest of this article, we will fix an $\alpha \in \mathbb{R}_{>0}$, and assume   $\omega=tH+sD$ with $t, s \in \mathbb{R}_{>0}$  satisfying $ts=\alpha$, and $\bo = \tfrac{\lambda}{\alpha} H + \lambda D$  where $\lambda \in \mathbb{R}_{>0}$.

Suppose $F$ is a nonzero object in $\Bl$ and write $\ch(F)=(b_{ij})$.  Consider the following special cases of $F$:
\begin{itemize}
\item[(i)] $F \in \Coh^{\leq 1}(X)$.
  \begin{itemize}
  \item[(a)] $F \in \Coh^{\leq 0}(X)$: then $\oZw (F)$ is identically zero  and  $\phi (F)=\tfrac{1}{2}$ by definition.
  \item[(b)] $\dimension F=1$:  then either $b_{02}$ or $b_{11}$ is strictly positive, and so $\oZw (F)\in i\mathbb{R}_{> 0}$ for any $s >0$, i.e.\ $\phi (F) = \tfrac{1}{2}$ for all $s>0$.
  \end{itemize}
\item[(ii)] $F \in \scalea{\gyoung(;;*;*,;+;*;*)}$: since $b_{00}=0, b_{01} \geq 0$ and $b_{10}>0$, we have $\Re \oZw (F)>0$ for all $s>0$.  As $s \to \infty$, $\oZw (F)$ is dominated by the term $hs^2b_{10}$ in $\Re \oZw (F)$, and    so $\phi (F) \to 0$.
\item[(iii)] $F \in \Coh^2(\pi)_1$: we have $\ch (F) = \mbox{\tiny $\begin{pmatrix} 0 & \ast & \ast \\ 0 & \ast & \ast \end{pmatrix}$}$ with $\ch_{01}(F)>0$, and $\oZw (F)=2h\alpha b_{01} + i (tb_{02}+ 2hsb_{11})$.    Consider the following special cases:
  \begin{itemize}
  \item[(A)] $F \in W_{0,X}$: then $F \in \scalea{\gyoung(;;+;*,;;+;*)}$, and  $b_{11} > 0$ by Remark \ref{remark10}.  Then  $\phi (F) \to \tfrac{1}{2}$ as $s \to \infty$.
  \item[(B)] $F \in W_{1,X}$: then $b_{11} \leq  0$ by \cite[Remark 5.17]{LZ2}.  Consider the two further special cases:
        \begin{itemize}
        \item[(a)] $F \in \scalea{\gyoung(;;+;*,;;-;*)}$: then $b_{11}<0$ and  $\phi (F) \to -\tfrac{1}{2}$ as $s \to \infty$.
        \item[(b)] $F \in \scalea{\gyoung(;;+;*,;;0;*)}$: then $b_{11}=0, b_{01}>0$ and $\oZw (F) = 2h\alpha b_{01} + itb_{02}$, and so $\phi (F) \to 0$ as $s \to \infty$.  Depending on whether $b_{02}$ is $>0, =0$ or $<0$, we have $\phi (F) \to 0^+$, $\phi (F)$ is identically zero, or $\phi (F) \to 0^-$, respectively.
        \end{itemize}
  \end{itemize}
\item[(iv)] $F \in \Tw^{l,+}$: then $b_{10}>0$ and $\oZw (F)$ is dominated by the term $hs^2b_{10}$ as $s \to \infty$.  Therefore,  $\phi (F) \to 0$ as $s \to \infty$.
\item[(v)] $F \in \Tc^{l,0}$: then
\[
  \oZw (F) = 2h\alpha b_{01} + i (tb_{02} + 2hsb_{11} - h\alpha s b_{00})
\]
where $2h\alpha b_{01}>0$ since $\mu^\ast (F)>0$.  Also, $F \in W_{1,X}$ by Remark \ref{remark11}, and $\wh{F}$ is a 2-dimensional sheaf.   Writing $\ch (\wh{F})=(\hat{b}_{ij})$, we obtain
    \[
      0 < \bo^2\ch_1(\wh{F}) = 4h\tfrac{\lambda^2}{\alpha}\hat{b}_{01} + 2h\lambda^2\hat{b}_{10} = -2h\tfrac{\lambda^2}{\alpha}(2b_{11}-\alpha b_{00}).
    \]
    This shows $(2b_{11}-\alpha b_{00}) <0$, and so $\phi (F) \to -\tfrac{1}{2}$ as $s \to \infty$.
\item[(vi)] $F \in \Fc^{l,0}$: then $F \in W_{1,X}$ also by Remark \ref{remark11}, and the argument in (v) shows that the leading term in $\oZw (F[1])$ is $-ihs(2b_{11}-\alpha b_{00})$, and so $\phi (F[1]) \to \tfrac{1}{2}$ as $s \to \infty$.
\item[(vii)] $F \in \Fw^{l,-}$: then $b_{10}<0$, and  the leading term of $\oZw (F[1])$ is $hs^2(-b_{10})$, implying $\phi (F[1]) \to 0$ as $s \to \infty$.
\end{itemize}
We  summarise these  phase computations  in Table \ref{table1}.

\afterpage{
\begin{table}[p]
\caption{Phases of various types of sheaves under $\oZw$ as $s \to \infty$}
\centering
\begin{tabular}{|m{1.7cm}|m{6cm}|m{2.5cm}|}
\hline
Case & Object $F$ & $\oZw (F), s \to \infty$  \\
\hline
(i)(a) & $\Coh^{\leq 0}(X)$ & \setlength{\unitlength}{1.5mm}
\begin{picture}(10,10)
\put(0,5){\vector(1,0){10}}
\put(5,0){\vector(0,1){10}}
\put(5,5){\circle*{1}}
\end{picture} \\
\hline
(i)(b) & $F \in \Coh^{\leq 1}(X), \dimension F=1$ & \setlength{\unitlength}{1.5mm}
\begin{picture}(10,10)
\put(0,5){\vector(1,0){10}}
\put(5,0){\vector(0,1){10}}
\linethickness{1mm}
\put(5,5.3){\line(0,1){4.7}}
\put(5,5){\circle{1}}
\end{picture} \\
\hline
(ii) & $\scalea{\gyoung(;;*;*,;+;*;*)}$ & \setlength{\unitlength}{1.5mm}
\begin{picture}(10,10)
\put(0,5){\vector(1,0){10}}
\put(5,0){\vector(0,1){10}}
\linethickness{1mm}
\put(8.5,5){\line(1,0){1.5}}
\end{picture} \\
\hline
(iii)(A) & $\scalea{\gyoung(;;+;*,;;+;*)}$ & \setlength{\unitlength}{1.5mm}
\begin{picture}(10,10)
\put(0,5){\vector(1,0){10}}
\put(5,0){\vector(0,1){10}}
\linethickness{1mm}
\put(5.2,8.5){\line(0,1){1.5}}
\end{picture} \\
\hline
(iii)(B)(a) & $\scalea{\gyoung(;;+;*,;;-;*)}$ & \setlength{\unitlength}{1.5mm}
\begin{picture}(10,10)
\put(0,5){\vector(1,0){10}}
\put(5,0){\vector(0,1){10}}
\linethickness{1mm}
\put(5.2,1.5){\line(0,-1){1.5}}
\end{picture} \\
\hline
(iii)(B)(b) & $\scalea{\gyoung(;;+;*,;;0;*)}$ &
\setlength{\unitlength}{1.5mm}
\begin{picture}(10,10)
\put(0,5){\vector(1,0){10}}
\put(5,0){\vector(0,1){10}}
\linethickness{1mm}
\put(5.3,5){\line(1,0){4.7}}
\put(5,5){\circle{1}}
\end{picture} \\
\hline
(iv) & $\Tc^{l,+}$&
\setlength{\unitlength}{1.5mm}
\begin{picture}(10,10)
\put(0,5){\vector(1,0){10}}
\put(5,0){\vector(0,1){10}}
\linethickness{1mm}
\put(8.5,5){\line(1,0){1.5}}
\end{picture} \\
\hline
(v) & $\Tc^{l,0}$ &
\setlength{\unitlength}{1.5mm}
\begin{picture}(10,10)
\put(0,5){\vector(1,0){10}}
\put(5,0){\vector(0,1){10}}
\linethickness{1mm}
\put(5.2,1.5){\line(0,-1){1.5}}
\end{picture} \\
\hline
(vi) & $\Fc^{l,0}[1]$ &
\setlength{\unitlength}{1.5mm}
 \begin{picture}(10,10)
\put(0,5){\vector(1,0){10}}
\put(5,0){\vector(0,1){10}}
\linethickness{1mm}
\put(5.2,8.5){\line(0,1){1.5}}
\end{picture} \\
\hline
(vii) & $\Fc^{l,-}[1]$ &
\setlength{\unitlength}{1.5mm}
\begin{picture}(10,10)
\put(0,5){\vector(1,0){10}}
\put(5,0){\vector(0,1){10}}
\linethickness{1mm}
\put(8.5,5){\line(1,0){1.5}}
\end{picture} \\
\hline
\end{tabular}
\label{table1}
\end{table}
\FloatBarrier
}

\section{Slope stability vs limit tilt stability}\label{section-slopevslimittilt}

In this section, we prove the following comparison theorem between $\mu_{\bo}$-stability for coherent sheaves and $\nu^l$-stability for objects in $\Bl$:

\begin{theorem}[$\mu_{\bo}$-stability vs $\nu^l$-stability]\label{theorem2}
Fix any real numbers $\lambda, \alpha >0$.  Let $\bo = \tfrac{\lambda}{\alpha} H + \lambda D$ and $\omega = tH + sD$ where $ts=\alpha$ and $t, s \in \mathbb{R}_{>0}$.
\begin{itemize}
\item[(A)] Suppose $E$ is a $\mu_{\bar{\omega}}$-stable torsion-free coherent sheaf on $X$ satisfying
    \begin{equation}\label{eq53}
      \Hom (W_{0,X}\cap \Coh^{\leq 1}(X),E[1])=0.
    \end{equation}
    Then $\Phi E [1]$ is a $\nu^l$-stable object in $\Bl$.
\item[(B)] Suppose $F \in \Bl$ is a $\nu^l$-semistable object with $\ch_{10}(F) \neq 0$.  Consider the decomposition of $F$ in $\Bl$ with respect to the torsion triple \eqref{eq19}
    \[
    0 \to F' \to F \to F'' \to 0
    \]
    where $F' \in \langle \Fl [1], W_{0,X}\rangle$ and $F''\in W_{1,X} \cap \Tl$.  Then $\Phi (F')$ is a torsion-free $\mu_{\bo}$-semistable sheaf, while $\Phi (F'')[1]$ is a $\Phi$-WIT$_0$ sheaf in $\Coh^{\leq 1}(X)$.
\end{itemize}
\end{theorem}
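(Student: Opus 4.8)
The engine for both parts is the slope--phase correspondence foreshadowed in Section~\ref{section-heuristics}. I would first record it as a lemma: for a nonzero torsion-free sheaf $N$, the object $\Phi N[1]$ lies in $\Bl$ by Lemma~\ref{lemma9}, and the two top-degree terms of $\oZw(\Phi N[1])$, viewed as a Laurent polynomial in $s$, are $hs^2\rk(N)$ in the real part and, by \eqref{eq12}, a positive multiple of $s\,\mu_{\bo}(N)\rk(N)$ in the imaginary part. Hence $\phi(\Phi N[1])\to 0$, and among such objects the ordering of limiting phases is governed, to leading order, by the ordering of the $\mu_{\bo}$-slopes, with ties decided by the next Chern components. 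With this dictionary in hand, both directions become a matter of transporting short exact sequences through the equivalence $\Phi$ and reading off the limiting phases from Table~\ref{table1}.

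For part~(A), set $F=\Phi E[1]\in\Bl$ and take any short exact sequence $0\to A\to F\to B\to 0$ in $\Bl$ with $A,B\neq 0$; it suffices to prove $\phi(F)\prec\phi(B)$. Applying $\Phi$ gives a triangle $\Phi A\to E\to\Phi B\to\Phi A[1]$, and since $\Phi\Bl\subseteq D^{[0,1]}_{\Coh(X)}$ by \eqref{eq57} while $E$ is a sheaf, the cohomology sequence degenerates to
\[
0\to H^0(\Phi A)\to E\to\Phi B\to H^1(\Phi A)\to 0 .
\]
In particular $\Phi B$ is a sheaf, so $B\in\langle\Fl[1],W_{0,X}\rangle$; writing $K:=H^0(\Phi A)\subseteq E$ and $Q:=H^1(\Phi A)$, the decomposition of $A$ along the torsion pair of Remark~\ref{remark15} together with Lemma~\ref{lemma7} shows $Q$ is torsion, and $E/K\hookrightarrow\Phi B$ with cokernel $Q$. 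When $Q\in\Coh^{\leq 1}(X)$ the slopes $\mu_{\bo}(\Phi B)$ and $\mu_{\bo}(E/K)$ agree, and the $\mu_{\bo}$-stability of $E$ forces $\mu_{\bo}(E/K)>\mu_{\bo}(E)$ on any nonzero proper quotient of positive rank; by the correspondence this gives $\phi(B)\succ\phi(F)$.

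The main obstacle in part~(A) is the degenerate configurations: $Q$ may have two-dimensional support, or $E/K$ may have rank zero, so that the leading-order comparison is inconclusive and one must descend to the codimension-two Chern data. This is exactly the role of \eqref{eq53}. Since $\Phi$ is an equivalence, $\Hom(W_{0,X}\cap\Coh^{\leq 1}(X),E[1])=\Hom(\Phi(W_{0,X}\cap\Coh^{\leq 1}(X)),F)$, and by Figure~\ref{tetris1} the category $\Phi(W_{0,X}\cap\Coh^{\leq 1}(X))$ consists of the $\Phi$-WIT$_1$ sheaves $C^1_{0,0},C^1_{0,-}$ (limiting phase $\tfrac12$) and $C^2_{1,0}$ (limiting phase $0$). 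Thus \eqref{eq53} says precisely that $F$ receives no nonzero maps from these categories, i.e.\ $F$ has no subobject among them; these are exactly the pieces that the slope comparison cannot control, and excluding them forces the strict inequality at the first nonvanishing order. I would organise this as a finite case analysis over the limiting phase of $A$ using Table~\ref{table1}.

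For part~(B), apply the torsion triple \eqref{eq19}: $\Phi F'$ sits in degree zero (Remark~\ref{remark15}), hence is a sheaf, while $F''\in W_{1,X}\cap\Tl$ has transform $\Phi F''[1]=\wh{F''}$ a torsion sheaf (Lemma~\ref{lemma7}) that is automatically $\Phi$-WIT$_0$ since $\Phi^2=\mathrm{id}[-1]$. The substance is to sharpen ``torsion'' to ``$\Coh^{\leq 1}(X)$'' and to upgrade $\Phi F'$. As $\ch_{10}(F)\neq 0$ and $\ch_{10}\geq 0$ on $\Bl$ (Remark~\ref{remark14}(iv)), we have $\ch_{10}(F)>0$, whence $\phi(F)\to 0$. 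A two-dimensional part of $\wh{F''}$ corresponds to a quotient of $F$ in $\Bl$ of limiting phase $-\tfrac12\prec\phi(F)$ --- coming either from the final term $\Tc^{l,0}$ of \eqref{eq47} or, after refining the $\Coh^{\leq 2}(X)$ term by the filtration \eqref{eq46}, from its $C^2_{1,-}$ layer, both of limiting phase $-\tfrac12$ by Table~\ref{table1} --- contradicting $\nu^l$-semistability; hence $\wh{F''}\in\Coh^{\leq 1}(X)$. Because this piece contributes nothing to $\ch_{10}$ or to $2\ch_{11}-\alpha\ch_{00}$, the objects $F$ and $F'$ share the two top orders of $\oZw$, so $\phi(F')$ matches $\phi(F)$ to leading order. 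Torsion-freeness of $\Phi F'$ follows because a torsion subsheaf would transform to a subobject of $F'\subseteq F$ of limiting phase $\tfrac12$, again impossible; and a $\mu_{\bo}$-destabilising saturated subsheaf $G\subsetneq\Phi F'$ with torsion-free quotient lifts through $\Phi$ (using Lemma~\ref{lemma9} on $G$ and on its quotient) to a subobject $\Phi G[1]\subseteq F'\subseteq F$ with $\phi(\Phi G[1])\succ\phi(F')\approx\phi(F)$, contradicting semistability. The hardest points here are the dimension reduction $\wh{F''}\in\Coh^{\leq 1}(X)$ and verifying that these subsheaves and quotients genuinely lift to short exact sequences in $\Bl$ under $\Phi$.
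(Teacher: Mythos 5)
Your proposal is correct and follows essentially the same route as the paper: transporting short exact sequences through $\Phi$, reading the two leading terms of $\oZw$ as $\mu_{\bo}$-data, using the torsion triple \eqref{eq19} together with Lemma \ref{lemma7} to isolate the codimension-two pieces, and invoking \eqref{eq53} to exclude exactly the subobjects lying in $\Phi(W_{0,X}\cap\Coh^{\leq 1}(X))$ that the slope comparison cannot see. The case analysis you defer (notably the configuration in part (A) where $\image \alpha$ has rank zero and $\ch_1(\image\alpha)=0$, in which one must show the quotient is a $\Phi$-WIT$_0$ fiber sheaf of phase identically $\tfrac{1}{2}$) is precisely what the paper's proof fills in, and your outline matches it.
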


\begin{remark}\label{remark12}
Torsion-free reflexive sheaves $E$ on a smooth projective threefold $X$ satisfy the vanishing
\[
  \Hom (\Coh^{\leq 1}(X), E[1]) = 0
\]
by \cite[Lemma 4.20]{CL}.  Hence for any $\mu_{\bar{\omega}}$-stable reflexive sheaves $E$ on $X$, we know $\Phi E [1]$ is a $\nu^l$-stable object in $\Bl$ by Theorem \ref{theorem2}(A).
\end{remark}

\begin{remark}\label{remark5}
 For  a torsion-free sheaf $E$ on $X$, we know $\Phi E [1] \in \Bl$ by Lemma \ref{lemma9}.  Also, $\Phi (\Phi E [1]) = E \in \Coh (X)$, and so $\Phi E [1] \in \langle \Fl [1], W_{0,X} \rangle$ by Remark \ref{remark15}.  Therefore, part (A) of Theorem \ref{theorem2} says that any $\mu_{\bo}$-stable torsion-free sheaf is taken by $\Phi$ into $ \langle \Fl [1], W_{0,X} \rangle$.  
 On the other hand, part (B) of Theorem \ref{theorem2} implies the following: any $\nu^l$-semistable object $F$ in the category $\langle \Fl [1], W_{0,X} \rangle$ with $\ch_{10}(F) \neq 0$  is taken by $\Phi$ to a $\mu_{\bo}$-semistable torsion-free sheaf.
\end{remark}

Recall, that for an arbitrary $\mu_\bo$-stable torsion-free sheaf $E$ on $X$, we can take the double dual of $E$ and form the exact triangle in $D^b(X)$
\[
  T[-1] \to E \to E^{\ast \ast} \to T
\]
where $T \in \Coh^{\leq 1}(X)$ while $E^{\ast \ast}$ is reflexive and  $\mu_\bo$-stable.  With this and Remarks \ref{remark12} and \ref{remark5}, we can now interpret Theorem \ref{theorem2} as follows: for any polarisation $\bo$ on $X$, the Fouier-Mukai transform takes  any $\mu_\bo$-stable torsion-free sheaf on $X$ to a limit stable object `up to a modification in codimension 2'.  Conversely, the Fourier-Mukai transform $\Phi$ takes a  limit tilt semistable object on $X$ to a $\mu_\bo$-semistable sheaf  `up to a modification in codimension 2'.

\subsection{Proof of Theorem \ref{theorem2}(A)}

\subsubsection{Objects in $W_{1,X} \cap \Tl$}\label{para-W1Tlanalysis}

We start with a brief analysis of the phases of certain objects in $W_{1,X} \cap \Tl$ that will be useful in the proof of Theorem \ref{theorem2}(A): take any $M \in W_{1,X} \cap \Tl$.  From Lemma \ref{lemma7}, we know $\ch_{10}(M)=0$.  Writing $\ch (M)=(m_{ij})$, we have
\[
  \oZw (M) = 2h\alpha m_{01} + i (tm_{02} + 2hsm_{11}-h\alpha s m_{00}).
\]
Lemma \ref{lemma7} also tells us there is a short exact sequence of sheaves
\[
0 \to M' \to M \to M'' \to 0
\]
where $M'' \in \Tc^{l,0}$ and $M' \in W_{1,X} \cap \Coh^{\leq 2}(X)$.  Let $\ch(M')=(m_{ij}')$ and $\ch(M'')=(m_{ij}'')$.
\begin{itemize}
\item From  case (v) of Section \ref{phase-computation}, we know  $\phi (M'') \to -\tfrac{1}{2}$ if $M'' \neq 0$.
\item If $\dimension M' = 2$: then $m'_{00}=0, m'_{01}>0$ and $m'_{11} \leq 0$.
  \begin{itemize}
  \item If $m'_{11}<0$, then $\phi (M') \to -\tfrac{1}{2}$.
  \item If $m'_{11}=0$, then $\phi (M') \to 0$.
  \end{itemize}
\item If $\dimension M' =1$: then $M'$ is a fiber sheaf, and $\phi (M') = \tfrac{1}{2}$.
\end{itemize}

\begin{proof}[Proof of Theorem \ref{theorem2}(A)]
Let $E$ be any $\mu_{\bar{\omega}}$-stable torsion-free coherent sheaf on $X$, and write $F = \Phi E[1]$.  That $\ch_0 (E) \neq 0$ implies $\ch_{10}(F) \neq 0$, and so $\phi (F) \to 0$. We know  $F \in \Bl$ from Lemma \ref{lemma9}.  Take any short exact sequence in $\Bl$
\[
 0 \to G \to F \to F/G \to 0
\]
where $G \neq 0$.  This yields the long exact sequence of coherent sheaves
\[
0 \to \Phi^0 G \to E \overset{\alpha}{\to} \Phi^0 (F/G) \to \Phi^1G \to 0 \to \Phi^1 (F/G) \to 0,
\]
and so $\Phi^1 (F/G)=0$.  From the torsion triple \eqref{eq19}, we also have the exact triangle
 \begin{equation}\label{eq18}
      \Phi (\Phi^0 G)[1] \to G \to \Phi (\Phi^1 G) \to \Phi (\Phi^0 G)[2]
 \end{equation}
 where $\Phi (\Phi^0 G)[1] \in \langle \Fl [1], W_{0,X}\rangle$ and $\Phi (\Phi^1 G) \in W_{1,X} \cap \Tl$.

Suppose $\rank (\image \alpha)=0$.  Then $\rank (\Phi^0 G) = \rank (E) >0$, and so $\ch_{10} (\Phi (\Phi^0 G)[1]) >0$.  We divide into two cases:
\begin{itemize}
\item[(a)] $\ch_1 (\image \alpha) \neq 0$.  In this case, we have $\mu_{\bar{\omega}}(\Phi^0 G) < \mu_{\bo}(E)$, which implies $\phi (\Phi(\Phi^0 G)[1]) \prec \phi (F)$.
    \begin{itemize}
    \item If $\dimension \Phi (\Phi^1 G)=3$, then from Section \ref{para-W1Tlanalysis} we have $\phi (\Phi (\Phi^1 G)) \to -\tfrac{1}{2}$, and so  $\phi (G) \prec \phi (F)$.
    \item If $\dimension \Phi (\Phi^1 G)\leq 2$, then the $\Tc^{l,0}$-component of  $\Phi (\Phi^1 G)$ vanishes, and $\Phi (\Phi^1 G) \in W_{1,X} \cap \Coh^{\leq 2}(X)$.
       \begin{itemize}
       \item If  $\dimension \Phi (\Phi^1 G) =2$ and   $\ch_{11} (\Phi (\Phi^1 G)) <0$:  then $\phi (\Phi (\Phi^1 G)) \to -\tfrac{1}{2}$ from Section  \ref{para-W1Tlanalysis} and we have $\phi (G) \prec \phi (F)$.
       \item If  $\dimension \Phi (\Phi^1 G)=2$ and $\ch_{11} (\Phi (\Phi^1 G))=0$, or if $\dimension \Phi (\Phi^1 G)=1$, the Laurent polynomial $\oZw ( \Phi (\Phi^1 G))$ in $s$ would have only constant and $s^{-1}$ terms, and so $\mu_{\bar{\omega}}(\Phi^0 G) < \mu_{\bo}(E)$ still implies $\phi (G) \prec \phi (F)$.
       \end{itemize}
    \end{itemize}
\item[(b)] $\ch_1 (\image \alpha) =0$.  In this case, $\image (\alpha) \in \Coh^{\leq 1}(X)$, meaning $\Phi^0 (G)$ and $E$ have the same $\ch_0, \ch_1$, and so $\mu_{\bo}(\Phi^0 G) = \mu_{\bo}(E)$.   If we are in either of the following situations
    \begin{itemize}
    \item $\dimension \Phi (\Phi^1 G)=3$;
    \item $\dimension \Phi (\Phi^1 G)=2$ and $\ch_{11}(\Phi (\Phi^1 G))<0$,
    \end{itemize}
     then $\oZw (G), \oZw (F)$ would have the same $s^2$ coefficient, but the $s$ coefficient of $\oZw (G)$ would be strictly smaller than that of $\oZw (F)$, giving us  $\phi (G) \prec \phi (F)$.  On the other hand, if we are in either of the following situations
     \begin{itemize}
     \item $\dimension \Phi (\Phi^1 G)=2$ and $\ch_{11} (\Phi (\Phi^1 G))=0$;
     \item $\dimension \Phi (\Phi^1 G)=1$,
     \end{itemize}
     then  $\ch (\Phi^1 G) = \mbox{\tiny $\begin{pmatrix} 0 & 0 & \ast \\ 0 & \ast & \ast \end{pmatrix}$}$, which implies $\Phi^1 G \in \Coh^{\leq 1}(X)$ by \cite[Lemma 5.10]{LZ2}.  Then $\Phi^0 (F/G) \in \Coh^{\leq 1}(X)$ also, and $\ch(F/G)=\mbox{\tiny $\begin{pmatrix} 0 & \ast & \ast \\ 0 & 0 & \ast \end{pmatrix}$}$.  Recall that we have the torsion triple $ \left( \Fl [1], W_{0,X}, W_{1,X} \cap \Tl \right)$ in $\Bl$  from \eqref{eq19}.  Since $\Phi^1 (F/G)=0$, the $W_{1,X}\cap \Tl$-component of $F/G$ with respect to this torsion triple must be zero.  On the other hand, $\ch_{10}(F/G)=0$, so both the $(\Fl[1])$- and $W_{0,X}$-components of $F/G$ satisfy $\ch_{10}=0$, meaning the $(\Fl[1])$-component of $F/G$ is a torsion-free sheaf shifted by 1, while the $W_{0,X}$-component of $F/G$ is a torsion sheaf in $\Coh (\pi)_{\leq 1}$.  Then, since $F/G$ has rank zero, its $(\Fl[1])$-component must be zero.  Thus $F/G$ is a $\Phi$-WIT$_0$ torsion sheaf.  Then $\Phi^0 (F/G)$ is a $\Phi$-WIT$_1$ torsion sheaf supported in dimension at most 1, and  \cite[Lemma 3.15]{Lo11} implies $\Phi^0(F/G)$ is a fiber sheaf, i.e.\ $F/G$ itself is a $\Phi$-WIT$_0$ fiber sheaf.  Thus $\phi (F/G)=\tfrac{1}{2}$ for all $s>0$.  On the other hand, $\ch_{10}(G)=\ch_{10}(F)>0$, and so $\phi (G) \to 0$ as $s \gg 0$.  Thus $\phi (G) \prec \phi (F/G)$.
\end{itemize}

Now suppose $\rank (\image \alpha)>0$.  If $\Phi^0 G \neq 0$, then $0 < \rank (\Phi^0 G) < \rank E$, and we have $\mu_{\bo}(\Phi^0 G) < \mu_{\bo}(E)$.  The same argument as in part (a) above then shows $\phi (G) \prec \phi (F)$.  For the rest of the proof, suppose that $\Phi^0 G =0$, so we have the short exact sequence of sheaves
\begin{equation}\label{eq55}
0 \to E \to \Phi^0 (F/G) \to \Phi^1 G \to 0.
\end{equation}
From the torsion triple \eqref{eq19}, we must have $G \in W_{1,X} \cap \Tl$; in particular, $G \in \Coh (X)$. Lemma \ref{lemma7} then gives a 2-step filtration $G_0 \subseteq G_1 = G$ of $G$ in $\Coh (X)$ where $G_0 \in W_{1,X} \cap \Coh^{\leq 2}(X)$ and $G_1/G_0 \in \Tc^{l,0}$.  In particular, $\ch_{10}(G)=0$.  Writing $\ch(G)=(g_{ij})$, we have
\[
  \oZw (G) = 2h\alpha g_{01} + i (tg_{02} + 2hsg_{11}-h\alpha sg_{00}).
\]
We divide into three cases:
\begin{itemize}
\item $G_1/G_0 \neq 0$: from case (v) of Section \ref{phase-computation} and knowing $\ch_{11}(G_0) \leq 0$, we know $2hg_{11}-h\alpha g_{00} <0$, forcing $\phi (G) \to -\tfrac{1}{2}$ and so $\phi (G) \prec \phi (F)$.
\item $G_1/G_0=0$ and $\ch_{11}(G_0)<0$: then $G=G_0$,  $\ch_{01}(G) >0$ (if $\ch_{01}(G)=0$, then $\ch_{11}(G_0)$ must be nonnegative by \cite[Lemma 5.9]{LZ2}).  Hence  $G \in \scalea{\gyoung(;;+;*,;;-;*)}$ and $\phi (G) \to -\tfrac{1}{2}$ from Table \ref{table1}.  Then $\phi (G) \prec \phi (F)$.
\item $G_1/G_0=0$ and $\ch_{11}(G_0) =0$: then $G=G_0$ and $\ch (\wh{G}) = \mbox{\tiny $\begin{pmatrix} 0 & 0 & \ast \\ 0 & \ast & \ast \end{pmatrix}$}$, and so $\wh{G} \in \Coh^{\leq 1}(X)$ by \cite[Lemma 5.10]{LZ2}.  That is, $\wh{G}=\Phi^1 G \in W_{0,X} \cap \Coh^{\leq 1}(X)$.  The assumption \eqref{eq53} then implies $0=\Hom (\wh{G},E[1])=\Hom (G, \Phi E [1])=\Hom (G,F)$, contradicting $G \neq 0$.  Hence this case cannot occur.
\end{itemize}
This completes the proof that $F$ is $\nu^l$-stable as an object of $\Bl$.
\end{proof}

\subsection{Proof of Theorem \ref{theorem2}(B)}

We now proceed to prove Theorem \ref{theorem2}(B) via a series of lemmas.  The first thing that we need is:

\begin{lemma}\label{lemma30}
$\Coh^{\leq 0}(X)$, $\Coh (\pi)_0$ and $\Coh^{\leq 1}(X)$ are Serre subcategories of $\Bl$.
\end{lemma}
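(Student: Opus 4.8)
The plan is to check, uniformly for each of the three full subcategories $\mathcal S \in \{\Coh^{\leq 0}(X),\ \Coh(\pi)_0,\ \Coh^{\leq 1}(X)\}$, the defining property of a Serre subcategory: for every short exact sequence $0 \to A' \to A \to A'' \to 0$ in $\Bl$, one has $A \in \mathcal S$ if and only if $A',A'' \in \mathcal S$. The argument will use only two features of $\mathcal S$. First, each $\mathcal S$ is a Serre subcategory of $\Coh(X)$ itself, i.e.\ closed under subsheaves, quotient sheaves, and extensions of sheaves; this is immediate for all three from the behaviour of the (fiber) support dimension. Second, $\mathcal S \subseteq \Coh^{\leq 1}(X) \subseteq \Tl$, the last inclusion being Remark \ref{remark14}(i); in particular every object of $\mathcal S$ lies in cohomological degree $0$ inside $\Bl$, and every sheaf in $\mathcal S$ has vanishing $\ch_0$ and $\ch_1$.

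For the ``extension'' direction, suppose $A',A'' \in \mathcal S$ and $0 \to A' \to A \to A'' \to 0$ is exact in $\Bl$. The long exact sequence of standard cohomology sheaves, together with $H^{-1}(A') = H^{-1}(A'') = 0$, forces $H^{-1}(A) = 0$. Hence $A$ is a genuine coherent sheaf and the sequence is exact in $\Coh(X)$, so $A \in \mathcal S$ because $\mathcal S$ is closed under extensions in $\Coh(X)$.

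For the converse direction, suppose $A \in \mathcal S$ and $0 \to G \to A \to Q \to 0$ is exact in $\Bl$; I must show $G,Q \in \mathcal S$. The cohomology sequence gives $H^{-1}(G)=0$, so $G \in \Tl$ is a sheaf, and it identifies $H^{-1}(Q)$ with the kernel $K$ of the induced sheaf map $\beta \colon G \to A$, where $K = H^{-1}(Q) \in \Fl$ is torsion-free by Remark \ref{remark14}(ii). The one genuinely non-formal step — and the main obstacle — is to prove $K = 0$. Since $\image\beta \subseteq A \in \mathcal S \subseteq \Coh^{\leq 1}(X)$, the sheaf $G/K \cong \image\beta$ has vanishing $\ch_0$ and $\ch_1$; thus, if $K \neq 0$, then $G$ and $K$ share the common positive rank $\rank K$ and satisfy $\mu_f(G) = \mu_f(K)$ and $\mu^\ast(G) = \mu^\ast(K)$. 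But $G \in \Tl$ forces, by Lemma \ref{lemma2}(c) applied to the identity quotient $G \twoheadrightarrow G$, that $\mu_f(G) > 0$, or $\mu_f(G)=0$ and $\mu^\ast(G) > 0$; whereas $K \in \Fl$ forces, by Lemma \ref{lemma1}(c) applied to $K \subseteq K$, that $\mu_f(K) < 0$, or $\mu_f(K)=0$ and $\mu^\ast(K) \leq 0$. These two alternatives are incompatible once the slopes agree, so $K = 0$. Therefore $\beta$ is injective, $G \cong \image\beta \in \mathcal S$ by closure under subsheaves, and $Q \cong A/G$ is again a sheaf lying in $\mathcal S$ by closure under quotients in $\Coh(X)$. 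This simultaneously yields closure under subobjects and quotients in $\Bl$, and together with the extension direction shows each $\mathcal S$ is a Serre subcategory of $\Bl$. The only delicate point is the vanishing of $K$; everything else is bookkeeping with the standard cohomology long exact sequence and the sheaf-level Serre property of $\mathcal S$.
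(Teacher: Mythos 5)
Your proof is correct, and it follows the same skeleton as the paper's (take the long exact sequence of standard cohomology, reduce everything to showing that the $H^{-1}$ of the quotient object vanishes, then finish with the sheaf-level Serre property of $\mathcal S$). The difference lies in how that crucial vanishing is obtained. The paper argues via the global positivity $\ch_{10}\geq 0$ on $\Bl$ (Remark \ref{remark14}(iv)) together with the finer structure theory: it deduces that $H^{-1}(Q)\in\Fc^{l,0}$ and that $H^0(G)$ has trivial $\Tc^{l,0}$-component, hence is a torsion sheaf in $\Coh(\pi)_{\leq 1}$, which cannot contain the torsion-free sheaf $H^{-1}(Q)$. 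You instead observe that $K=H^{-1}(Q)$ and $G$ have identical $\ch_0$ and $\ch_1$ (because $G/K$ embeds in $A\in\Coh^{\leq 1}(X)$), so $\mu_f$ and $\mu^\ast$ agree on them, and the defining slope dichotomies of $\Tl$ (Lemma \ref{lemma2}(c)) and $\Fl$ (Lemma \ref{lemma1}(c)) are then mutually exclusive. This is more elementary and self-contained: it needs only the raw definitions of $\Tl$ and $\Fl$, not the decomposition of $\Tl$ into torsion, $\Tc^{l,+}$ and $\Tc^{l,0}$ pieces. The one point worth making explicit is that condition (c) of Lemma \ref{lemma2} (resp.\ Lemma \ref{lemma1}) holds for \emph{every} object of $\Tl$ (resp.\ $\Fl$), not just the generators used in the definition of the extension closure; this follows from the equivalent description in Remark \ref{remark14}(vi), since the classes $\Tw$ and $\Fw$ are themselves closed under extensions, and the paper uses this fact freely elsewhere. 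You also verify the extension-closure direction explicitly, which the paper leaves implicit; both directions are needed for the Serre property, so that is a small completeness gain rather than a divergence.
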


\begin{proof}
Consider the case of $\Coh^{\leq 1}(X)$ first.  Take any $\Bl$-short exact sequence
\[
0 \to A \to E \to G \to 0
\]
where $E \in \Coh^{\leq 1}(X)$  and the associated long exact sequence of sheaves
\[
0 \to H^{-1}(G) \to H^0(A) \overset{\alpha}{\to} E \to H^0(G) \to 0.
\]
Since $E$ satisfies $\ch_{10}=0$, Remark \ref{remark14}(iv) implies that $A, G$ both satisfy $\ch_{10}=0$ and $H^i(A), H^i(G)$ satisfy $\ch_{10}=0$ for all $i$.  In particular, we have $\ch_{10}(H^{-1}(G)) = 0 =\ch_{10}(H^0(A))$.  The vanishing $\ch_{10}(H^{-1}(G))=0$ means $H^{-1}(G) \in \Fc^{l,0}$, and so $\mu^\ast (H^{-1}(G)) \leq 0$ if $H^{-1}(G)$ is nonzero.  Since $\ch_{01} (\image \alpha)=0$, we have $\ch_{01}(H^0(A)) \leq 0$.  Now, $\ch_{10}(H^0(A))=0$ also implies $H^0(A)$ is the extension of a sheaf in $\Tc^{l,0}$ by a sheaf in $\Coh (\pi)_{\leq 1}$.  Since $\ch_{01}(H^0(A)) \leq 0$, the $\Tc^{l,0}$ component of $H^0(A)$ must be zero, i.e.\ $H^0(A) \in \Coh (\pi)_{\leq 1}$.  This forces $H^{-1}(G)$ to be zero since every nonzero sheaf in $\Fc^l$ is torsion-free.  Hence $A=H^0(A)$ is a subsheaf of $E$ and lies in $\Coh^{\leq 1}(X)$.  Also, $G=H^0(G)$ lies in $\Coh^{\leq 1}(X)$ since it is a sheaf quotient of $E$.  That is, $\Coh^{\leq 1}(X)$ is a Serre subcategory of $\Bl$.

The argument above still applies when we replace  $\Coh^{\leq 1}(X)$ with either $\Coh^{\leq 0}(X)$ or $\Coh (\pi)_0$.
\end{proof}

\begin{lemma}\label{lemma5}
The category
\begin{equation}\label{eq16}
 \langle \Coh^{\leq 1}(X), \Fc^{l,0}[1] \rangle
\end{equation}
 is a torsion class in the heart $\Bl$.
\end{lemma}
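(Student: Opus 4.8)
The plan is to give an intrinsic, cohomological description of the category $\langle \Coh^{\leq 1}(X), \Fc^{l,0}[1]\rangle$, use it to verify directly that the category is closed under quotients and extensions in $\Bl$, and then conclude that it is a torsion class exactly as for $\Coh(X)$, via the criterion \cite[Lemma 1.1.3]{Pol} that a subcategory of a noetherian abelian category closed under quotients and extensions is a torsion class.

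First I would record the identification
\[
  \langle \Coh^{\leq 1}(X), \Fc^{l,0}[1]\rangle = \{ E \in \Bl : H^{-1}(E) \in \Fc^{l,0} \text{ and } H^0(E) \in \Coh^{\leq 1}(X)\}.
\]
The inclusion $\supseteq$ is immediate from the canonical triangle $H^{-1}(E)[1] \to E \to H^0(E)$, while $\subseteq$ follows once one checks the right-hand side is closed under extensions: a short diagram chase on the cohomology long exact sequence shows that any subsheaf $K$ of an $\Fc^{l,0}$-object whose quotient lands in $\Coh^{\leq 1}(X)$ satisfies $\ch_{10}(K)=0$, hence lies in $\Fc^{l,0}$. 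At this stage I would also note the convenient reformulation that, for $F \in \Fl$ (automatically torsion-free by Remark \ref{remark14}(ii)), one has $F \in \Fc^{l,0}$ if and only if $\ch_{10}(F)=0$: indeed $\ch_{10}(F)=0$ forces $\mu_f(F)=0$, while Lemma \ref{lemma1}(c) gives $\mu_{f,\max}(F)\leq 0$, so $F$ is automatically $\mu_f$-semistable of slope $0$.

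The heart of the argument is closure under quotients. Given a short exact sequence $0 \to E' \to E \to E'' \to 0$ in $\Bl$ with $E$ in the category, I would take the six-term cohomology sequence
\[
  0 \to H^{-1}(E') \to H^{-1}(E) \to H^{-1}(E'') \to H^0(E') \to H^0(E) \to H^0(E'') \to 0 .
\]
Then $H^0(E'')$ is a sheaf quotient of $H^0(E) \in \Coh^{\leq 1}(X)$, hence lies in $\Coh^{\leq 1}(X)$. For the remaining term I would use additivity of $\ch_{10}$ together with the sign constraints $\ch_{10}\leq 0$ on $\Fl$ (Lemma \ref{lemma1}(c)) and $\ch_{10}\geq 0$ on $\Tl$ (Lemma \ref{lemma2}(c)): since $\ch_{10}$ vanishes on $H^{-1}(E)$, on $H^0(E)$, and on $H^0(E'') \in \Coh^{\leq 1}(X)$, the alternating sum forces
\[
  \ch_{10}(H^{-1}(E'')) = \ch_{10}(H^0(E')) - \ch_{10}(H^{-1}(E')) \geq 0,
\]
while $H^{-1}(E'') \in \Fl$ gives $\ch_{10}(H^{-1}(E'')) \leq 0$. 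Hence $\ch_{10}(H^{-1}(E''))=0$, so $H^{-1}(E'') \in \Fc^{l,0}$ and therefore $E'' \in \langle \Coh^{\leq 1}(X), \Fc^{l,0}[1]\rangle$.

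Closure under extensions is built into the definition of the extension closure (and is re-checked in the identification above), so both torsion-class axioms are in place. The main obstacle I anticipate is the appeal to noetherianity of $\Bl$: unlike $\Coh(X)$ this is not automatic for a tilted heart, so one must either invoke noetherianity of $\Bl$ separately or construct the torsion pair by hand. The latter route is delicate precisely because $\Fc^{l,-}[1]$ sits strictly between $\Fc^{l,0}[1]$ and $\Coh^{\leq 1}(X)$ in the torsion quintuple \eqref{eq47}; consequently the desired category is \emph{not} a prefix of that filtration and cannot be produced merely by regrouping its steps (one can check $\Hom(\Coh^{\leq 1}(X),\Fc^{l,-}[1])=\Ext^1(\Coh^{\leq 1}(X),\Fc^{l,-})$ need not vanish, so the middle step cannot be commuted past). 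This is exactly why I would route the proof through the cohomological characterization and the $\ch_{10}$ sign-chase, which make quotient-closure go through cleanly and reduce the statement to the standard noetherian criterion.
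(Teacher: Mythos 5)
Your reduction is fine as far as it goes: the cohomological description of $\langle \Coh^{\leq 1}(X), \Fc^{l,0}[1]\rangle$, the reformulation of membership in $\Fc^{l,0}$ via $\ch_{10}=0$ for objects of $\Fl$, and the $\ch_{10}$ sign-chase proving closure under quotients in $\Bl$ are all correct. But the argument stops exactly where the real work begins. The criterion of \cite[Lemma 1.1.3]{Pol} that ``closed under quotients and extensions implies torsion class'' is valid only in a \emph{noetherian} abelian category, and the noetherianity of the tilted heart $\Bl$ is neither proved in the paper nor available off the shelf (it is \cite[Lemma 3.2.4]{BMT1} for $\Bw$, but no analogue is established for the limit heart). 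You flag this yourself as ``the main obstacle'' and then nonetheless ``reduce the statement to the standard noetherian criterion,'' which is circular. Without noetherianity, closure under quotients and extensions does not by itself produce, for an arbitrary $E \in \Bl$, the required short exact sequence $0 \to A \to E \to E/A \to 0$ with $A$ in the class and $E/A$ in its right orthogonal: one needs a \emph{maximal} subobject of $E$ lying in the class, and that is an existence statement.

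The paper's proof is devoted precisely to that existence statement. It first splits off the $\mu_f$-slope-zero part $F$ of $H^{-1}(E)$ to get $0 \to F[1] \to E \to E' \to 0$, and then shows that $E'$ admits a maximal subobject $U \in \Coh^{\leq 1}(X)$ by proving that any ascending chain $U_1 \subseteq U_2 \subseteq \cdots \subseteq E'$ with $U_i \in \Coh^{\leq 1}(X)$ stabilises: the key point is that the $H^{-1}$ of the successive cokernels $G_i$ form an ascending chain of subsheaves of the fixed sheaf $H^{-1}(E')^{\ast\ast}$ (because each $U_{i+1}/U_i$ is supported in dimension at most one), hence stabilise, after which the $H^0(G_i)$ and then the $U_i$ themselves stabilise; the quotient $E'/U$ by the maximal such $U$ is then checked to lie in the right orthogonal. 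Your proposal needs either this chain-stabilisation argument (or an equivalent one) or an independent proof that $\Bl$ is noetherian; as written, the decisive step is missing.
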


\begin{proof}
Let us write $\mathcal C$ to denote the category \eqref{eq16} in this proof.  We follow the argument in the proof of \cite[Lemma 2.16]{Toda1}, and show that $(\CC, \CC^\circ)$ satisfies the two axioms of a torsion pair.  Since $\Hom (\CC , \CC^\circ)=0$ follows from the definition of $\CC^\circ$,  we only have to check that every object $E \in \Bl$ fits in a short exact sequence in $\Bl$ of the form
\[
 0 \to A \to E \to B \to 0
\]
where $A \in \CC$ and $B \in \CC^\circ$.

Take any object $E \in \Bl$.  Let $F \subseteq H^{-1}(E)$ be the $\mu_f$-HN factor of $H^{-1}(E)$ with $\mu_f=0$, i.e.\ $F$ is the left-most $\mu_f$-HN factor of $H^{-1}(E)$.  Then we have a short exact sequence in $\Bl$
\[
0 \to F[1] \to E \to E' \to 0
\]
and $\Hom (A[1], E')=0$ for any $\mu_f$-semistable sheaf $A$ with $\mu_f(A)=0$.  Now, if $E' \notin \CC^\circ$, then there exists some $U \in \Coh^{\leq 1}(X)$ such that $\Hom (U,E') \neq 0$.  Since $\Coh^{\leq 1}(X)$ is closed under quotients in $\Bl$ by Lemma \ref{lemma30}, we can further assume $U \subseteq E'$ in $\Bl$.

We claim that we can choose a maximal such $U$: suppose there is an ascending chain in $\Bl$
\[
  U_1 \subseteq U_2 \subseteq \cdots U_m \subseteq \cdots \subseteq E'
\]
where each $U_i \in \Coh^{\leq 1}(X)$.  For each $i \geq 1$, consider the short exact sequence in $\Bl$
\[
  0 \to U_i \to E' \overset{\alpha_i}{\to} G_i \to 0.
\]
These in turn give rise to the short exact sequences in $\Bl$
\begin{equation}\label{eq14}
  0 \to U_{i+1}/U_{i} \to G_i \to G_{i+1} \to 0
\end{equation}
from which we obtain the inclusions in $\Coh (X)$
\begin{equation}\label{eq13}
  H^{-1}(E') \subseteq H^{-1}(G_1) \subseteq H^{-1}(G_2) \subseteq \cdots \subseteq H^{-1}(G_m) \subseteq \cdots \subseteq H^{-1}(E')^{\ast \ast}.
\end{equation}
The existence of the last inclusion \eqref{eq13} can be seen as follows: for each $i$, the cokernel of the injection $H^{-1}(\alpha_i)$ in $\Coh (X)$ lies in $\Coh^{\leq 1}(X)$, and so taking double dual gives $H^{-1}(G_i)^{\ast \ast} \cong H^{-1} (E')^{\ast \ast}$ for each $i$.

Since $\Coh (X)$ is a Noetherian abelian category, the ascending chain of $H^{-1}(G_i)$ in \eqref{eq13} must stabilise.  Suppose $m$ is such that $H^{-1}(G_i) = H^{-1}(G_{i+1})$ for $i \geq m$.  Then we have the exact sequence in $\Coh (X)$
\[
 0 \to \cokernel (H^{-1}(\alpha_i)) \to U_i \to H^0(E')
\]
which gives $U_i/\cokernel (H^{-1}(\alpha_i)) \hookrightarrow H^0(E')$.  Since $\cokernel (H^{-1}(\alpha_i))$ is constant for $i \geq m$, we see that the $U_i$ in \eqref{eq14} must stabilise.

Let $U \in \Coh^{\leq 1}(X)$ be maximal such that $U\subseteq E'$ in $\Bl$, and let $\beta$ denote the  composition of surjections in $\Bl$
\begin{equation}\label{eq15}
  E \twoheadrightarrow E' \twoheadrightarrow E'/U.
\end{equation}
Then we have a short exact sequence in $\Bl$
\[
  0\to F' \to E \overset{\beta}{\to} E'/U \to 0
\]
for some $F'$.  Applying the octahedral axiom to the surjections in \eqref{eq15}, we see right away that $H^{-1}(F') \cong F$ and $H^0(F') \cong U$, i.e.\ $F' \in \mathcal{C}$.
On the other hand, we have $\Hom (\Coh^{\leq 1}(X),E'/U)=0$ by the maximality of $U$.  Also, from the $\Bl$-short exact sequence
\[
0 \to U \to E' \to E'/U \to 0
\]
we have the exact sequence of sheaves
\[
0 \to H^{-1}(E') \to H^{-1}(E'/U) \to U.
\]
Since $U \in \Coh^{\leq 1}(X)$ and $H^{-1}(E') \in \Fc^{l,-}$ by construction, we also have $H^{-1}(E'/U) \in \Fc^{l,-}$.  Hence $\Hom (\Fc^{l,0}[1],E'/U)=0$.  Overall, we obtain $\Hom (\mathcal{C},E'/U)=0$, i.e.\ $E'/U \in \mathcal{C}^\circ$.  Thus every object $E$ in $\Bl$ can be written as the extension of an object $E'/U$ in $\mathcal{C}^\circ$ by an object $F'$ in $\mathcal{C}$, proving the lemma.
\end{proof}

Let us define
\begin{align}
  \mathcal A_{\bullet} &= \Coh^{\leq 0}(X), \\
  \mathcal A_{\bullet,1/2} &=  \langle \Coh^{\leq 1}(X), W_{0,X} \cap \Coh (\pi)_{\leq 1}, \Fc^{l,0} [1] \rangle \notag\\
  &= \langle
\xymatrix @-2.3pc{
\scalea{\gyoung(;;;,;;;+)} & \scalea{\gyoung(;;;+,;;;+)} & \scalea{\gyoung(;;;*,;;+;*)} & \scalea{\gyoung(;;+;*,;;+;*)} \\
& \scalea{\gyoung(;;;+,;;;0)} & &  \\
& \scalea{\gyoung(;;;+,;;;-)} & &
}, \Fc^{l,0}[1]\rangle. \label{eq23}
\end{align}

\begin{remark}\label{remark16}
$\mathcal A_{\bullet,1/2}$ is the extension closure of all the objects in Table \ref{table1} where $\phi \to \tfrac{1}{2}$ as $s \to \infty$, while $\Ac_\bullet$ is precisely the category of all objects in $\Bl$ whose $\oZw$ is identically zero.  
\end{remark}

\begin{lemma}\label{lemma12}
The subcategory $\mathcal A_{\bullet,1/2}$ is closed under quotient in $\Bl$, and every object in this category satisfies $\phi \to \tfrac{1}{2}$ as $s \to \infty$.
\end{lemma}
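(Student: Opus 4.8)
The plan is to reduce everything to the additivity of the fiber degree $\ch_{10}$ on $\Bl$, together with the cohomological description of $\mathcal A_{\bullet,1/2}$. Set $\mathcal Q_0 := \langle \Coh^{\leq 1}(X),\, W_{0,X}\cap\Coh (\pi)_{\leq 1}\rangle$, so that $\mathcal A_{\bullet,1/2}=\langle \mathcal Q_0,\, \Fc^{l,0}[1]\rangle$ by \eqref{eq23}. First I would record three facts about the generators. (1) $\mathcal Q_0$ is a torsion class in $\Coh (X)$, hence closed under quotients: it is the extension closure of the torsion classes $\Coh^{\leq 1}(X)$ and $W_{0,X}\cap\Coh (\pi)_{\leq 1}$ (the latter an intersection of the torsion classes $W_{0,X}$ and $\Coh (\pi)_{\leq 1}$), and extension closures of torsion classes in the noetherian category $\Coh (X)$ are again torsion classes by \cite[Lemma 1.1.3]{Pol}. (2) Each generator has $\ch_{10}=0$: sheaves in $\Coh^{\leq 1}(X)$ have $\ch_0=\ch_1=0$; objects of $C^2_{1,+}=\Coh^2(\pi)_1\cap W_{0,X}$ have $a_{00}=a_{10}=0$ by Remark \ref{remark10}; and $F\in\Fc^{l,0}$ has $\mu_f(F)=a_{10}/a_{00}=0$ with $a_{00}>0$, so $a_{10}=0$. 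By additivity $\ch_{10}(E)=0$ for all $E\in\mathcal A_{\bullet,1/2}$. (3) Since $\mathcal Q_0\subseteq\Tl$ (Remark \ref{remark14}(i),(iii)), a short induction on the length of a filtration of $E\in\mathcal A_{\bullet,1/2}$, using the right-exactness of $H^0$ on $\Bl$-short exact sequences (valid because $H^1$ vanishes on $\Bl$) and the closure of $\mathcal Q_0$ under quotients and extensions, shows $H^0(E)\in\mathcal Q_0$, the $\Fc^{l,0}[1]$-factors contributing nothing to $H^0$.

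For the phase assertion I would substitute $\ch_{10}(E)=0$ into \eqref{eq4}: with $\ch (E)=(b_{ij})$ and $ts=\alpha$,
\[
 \oZw (E)=2h\alpha b_{01}+i\left(\tfrac{\alpha}{s}b_{02}+hs(2b_{11}-\alpha b_{00})\right).
\]
From the per-generator computations in Section \ref{phase-computation} and Table \ref{table1} (cases (i), (iii)(A), (vi)) together with additivity, one obtains $b_{01}\ge 0$ and $2b_{11}-\alpha b_{00}\ge 0$, with the sharpening that the generators with $2a_{11}-\alpha a_{00}=0$ are precisely $\Coh^{\leq 0}(X)$ and the fiber sheaves $C^1_{0,+},C^1_{0,0},C^1_{0,-}$, all of which have $a_{01}=0$ and $a_{02}\ge 0$. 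Hence if $2b_{11}-\alpha b_{00}>0$ then $\Im\oZw (E)\to+\infty$ while $\Re\oZw (E)=2h\alpha b_{01}\ge 0$ is constant in $s$, so $\phi (E)\to\tfrac12$; and if $2b_{11}-\alpha b_{00}=0$ then, being a sum of nonnegative contributions, every filtration factor contributes $0$, forcing $b_{01}=0$, so $\oZw (E)=i\tfrac{\alpha}{s}b_{02}\in i\mathbb R_{\ge 0}$ and $\phi (E)=\tfrac12$. This settles the second statement.

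For closure under quotients, let $0\to K\to E\to Q\to 0$ be exact in $\Bl$ with $E\in\mathcal A_{\bullet,1/2}$. Since $\ch_{10}(E)=0$ and $\ch_{10}\ge 0$ on $\Bl$ (Remark \ref{remark14}(iv)), additivity forces $\ch_{10}(Q)=0$. In the canonical sequence $0\to H^{-1}(Q)[1]\to Q\to H^0(Q)\to 0$ we have $\ch_{10}(H^0(Q))\ge 0$ (as $H^0(Q)\in\Tl$) and $\ch_{10}(H^{-1}(Q))\le 0$ (as $H^{-1}(Q)\in\Fl$, by Lemma \ref{lemma1}(c)), so both vanish. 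The decisive step is then to upgrade membership in $\Fl$ to membership in $\Fc^{l,0}$: the $\mu_f$-HN factors of $H^{-1}(Q)\in\Fl$ all have $\mu_f\le 0$, are torsion-free of positive rank, and their fiber degrees sum to $\ch_{10}(H^{-1}(Q))=0$; since each is $\le 0$, each is $0$, so $H^{-1}(Q)$ is $\mu_f$-semistable of slope $0$, i.e.\ $H^{-1}(Q)\in\Fc^{l,0}$. Meanwhile the standard cohomology sequence yields a surjection $H^0(E)\twoheadrightarrow H^0(Q)$ (because $H^1(K)=0$), so $H^0(Q)\in\mathcal Q_0$ by quotient-closure. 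Therefore $Q$ is an extension in $\Bl$ of $H^0(Q)\in\mathcal Q_0$ by $H^{-1}(Q)[1]\in\Fc^{l,0}[1]$, both lying in $\mathcal A_{\bullet,1/2}$, whence $Q\in\mathcal A_{\bullet,1/2}$.

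I expect the only genuinely delicate point to be that last step — that the vanishing of the fiber degree of $H^{-1}(Q)$ forces its $\mu_f$-semistability at slope $0$, promoting it into $\Fc^{l,0}$. This is where the positivity of $\ch_{10}$ on the two halves of the torsion pair $(\Tl,\Fl)$ enters essentially; the remaining steps are bookkeeping with the additive invariants $a_{ij}$ and the torsion-class formalism.
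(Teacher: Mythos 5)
Your proof is correct, and on the quotient-closure half it takes a genuinely different route from the paper's. The paper reduces the problem: it invokes Lemma \ref{lemma5} (that $\langle \Coh^{\leq 1}(X), \Fc^{l,0}[1]\rangle$ is a torsion class in $\Bl$, established there by a noetherian stabilisation argument) and is then left only to check that $\Bl$-quotients of objects of $W_{0,X}\cap\Coh(\pi)_{\leq 1}$ stay in $\mathcal A_{\bullet,1/2}$, which it does by essentially the same $\ch_{10}$-bookkeeping you use. Your argument bypasses Lemma \ref{lemma5} entirely: you show directly that for any $\Bl$-quotient $Q$ of $E\in\mathcal A_{\bullet,1/2}$, the sheaf $H^0(Q)$ is a $\Coh(X)$-quotient of $H^0(E)\in\langle\Coh^{\leq 1}(X),\,W_{0,X}\cap\Coh(\pi)_{\leq 1}\rangle$ (using that $H^1$ vanishes on $\Bl$ and that this generating subcategory is a torsion class in $\Coh(X)$), while the vanishing $\ch_{10}(H^{-1}(Q))=0$ promotes $H^{-1}(Q)$ from $\Fl$ into $\Fc^{l,0}$ via the $\mu_f$-HN filtration --- the same mechanism the paper deploys in Lemma \ref{lemma30} and Remark \ref{remark11}(i). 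What your route buys is self-containedness: it uses only the sign constraints on $\ch_{10}$ over the torsion pair $(\Tl,\Fl)$ and elementary torsion-class formalism, whereas the full strength of Lemma \ref{lemma5} (the existence of the torsion-pair decomposition, not merely quotient-closure) is what forces the noetherian argument. What the paper's route buys is brevity, since Lemma \ref{lemma5} is needed later anyway (Lemma \ref{lemma16}(b)). For the phase assertion the paper simply points to Remark \ref{remark16}; your explicit verification that $2a_{11}-\alpha a_{00}\geq 0$ and $a_{01}\geq 0$ on all generators, with the degenerate case reducing to $\oZw(E)\in i\mathbb{R}_{\geq 0}$, is a correct and slightly more detailed justification of the same fact.
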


\begin{proof}
Given Lemma \ref{lemma5}, in order to prove the first assertion in the lemma, we only need to show that any quotient of $W_{0,X} \cap \Coh (\pi)_{\leq 1}$ in $\Bl$ is again an object in the category \eqref{eq23}.

Take any $A \in W_{0,X} \cap \Coh (\pi)_{\leq 1}$, and consider any $\Bl$-short exact sequence of the form
\[
 0 \to A' \to A \to A'' \to 0.
\]
From the long exact sequence
\[
 0 \to H^{-1}(A'') \to H^0(A') \to H^0(A) \to H^0(A'') \to 0
\]
and the fact that $\Coh (\pi)_{\leq 1}$ is a Serre subcategory of $\Coh (X)$, we see that $H^{-1}(A'')$ and $H^0(A')$ have the same $\ch_0$ and $\ch_{10}$.  By Remark \ref{remark14}(iv), it follows  that either $H^{-1}(A'')=0$ or $\mu_f (H^{-1}(A'')) = \mu_f (H^0(A'))=0$.  In the latter case, $H^{-1}(A'')$ is a $\mu_f$-semistable sheaf with $\mu_f=0$.  Also, $H^0(A'')$ lies in $W_{0,X} \cap \Coh (\pi)_{\leq 1}$.  Hence $A''$ lies $\Bl$.

The second assertion of the lemma follows from Remark \ref{remark16}.
\end{proof}

\begin{lemma}\label{lemma15}
Suppose $F \in \Bl$ is a $\nu^l$-semistable object with $\ch_{10} \neq 0$.  Then
\begin{equation}\label{eq58}
  \Hom (\Coh^{\leq 2}(X), \Phi F)=0.
\end{equation}
If, in addition, $F = \Phi E [1]$ for some coherent sheaf $E$, then $E$ is a torsion-free sheaf.
\end{lemma}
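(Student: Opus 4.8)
The plan is to reduce both assertions to the single statement that $\Phi^0 F := H^0(\Phi F)$ is a torsion-free sheaf, and then to prove this by transporting a hypothetical torsion subsheaf back through $\Phi$ and contradicting $\nu^l$-semistability via the phase computations of Table \ref{table1}. First I would record the reduction. Since $\Coh^{\leq 2}(X)$ is exactly the category of torsion sheaves on the threefold $X$, and $\Phi F \in D^{[0,1]}_{\Coh(X)}$ by \eqref{eq57}, the truncation triangle $\Phi^0 F \to \Phi F \to (\Phi^1 F)[-1]$ together with $\Ext^{-1}(T,\Phi^1 F)=0$ for a sheaf $T$ gives $\Hom(T,\Phi F)\cong \Hom(T,\Phi^0 F)$. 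Hence \eqref{eq58} is equivalent to $\Phi^0 F$ having no nonzero torsion subsheaf, i.e.\ to $\Phi^0 F$ being torsion-free. This also settles the ``in addition'' clause immediately: if $F=\Phi E[1]$ then $\Phi F=\Phi^2 E[1]=E$ by $\Phi^2=\mathrm{id}_X[-1]$, so $\Phi^0 F=E$ and the torsion-freeness of $\Phi^0 F$ is precisely the torsion-freeness of $E$.

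The core is therefore to show $\Hom(\Coh^{\leq 2}(X),\Phi F)=0$. Since $\Coh^{\leq 2}(X)$ is the extension closure of the torsion building blocks in columns $1$ through $5$ of Figure \ref{tetris1}, it suffices to check $\Hom(\mathcal C,\Phi F)=0$ for each such generating category $\mathcal C$. For a torsion sheaf $T$, applying $\Phi$ and using $\Phi^2=\mathrm{id}_X[-1]$ yields $\Hom(T,\Phi F)\cong\Hom(\Phi T[1],F)$. Because $T\in\Tw\subseteq\Bw$, Proposition \ref{pro-theorem1} gives $\Phi T\in\langle\Bl,\Bl[-1]\rangle$, so $\Phi T[1]$ has $\Bl$-cohomology concentrated in degrees $-1$ and $0$; consequently, by the vanishing of $\Ext^{-1}_{\Bl}$ in the heart, any morphism $\Phi T[1]\to F$ factors through $Q:=\mathcal H^0_{\Bl}(\Phi T[1])\in\Bl$. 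The plan is to prove that $Q\in\mathcal A_{\bullet,1/2}$ for every generator. Reading the transforms off Figure \ref{tetris1}: when $T$ lies in a lower-row ($W_{1,X}$) generator, $\Phi T[1]=\widehat T$ is a torsion sheaf lying in $\Coh^{\leq 1}(X)$ or in $W_{0,X}\cap\Coh(\pi)_{\leq 1}$, hence in $\mathcal A_{\bullet,1/2}$ and equal to $Q$; when $T$ lies in a top-row fiber generator ($C^0$, $C^1_{0,+}$, $C^1_1$, $C^2_{1,+}$), $\widehat T=\Phi T$ is a torsion sheaf in $\Tl$, so $Q=0$.

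The one delicate generator is $C^2_{2,+}$, whose transform $\widehat T\in C^3_{2,0}$ has positive rank; here $Q=\widehat T_f[1]$ is the shift of the $\Fl$-part of $\widehat T$, and I must rule out an $\Fc^{l,-}$ summand (which would have limiting phase $0$ by case (vii) of Section \ref{phase-computation}) and show instead $\widehat T_f\in\Fc^{l,0}$ (limiting phase $\tfrac12$). For this I would use that $\widehat T\in W_{1,X}$, that $W_{1,X}$ is closed under subsheaves (being the torsion-free class of the torsion pair $(W_{0,X},W_{1,X})$), and Lemma \ref{lemma7}: the $\Tl$-part $\widehat T_t$ then lies in $W_{1,X}\cap\Tl$ and so has $\ch_{10}=0$, whence $\ch_{10}(\widehat T_f)=\ch_{10}(\widehat T)-\ch_{10}(\widehat T_t)=0$ and $\widehat T_f\in\Fc^{l,0}$. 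This gives $Q=\widehat T_f[1]\in\Fc^{l,0}[1]\subseteq\mathcal A_{\bullet,1/2}$, so in every case $Q\in\mathcal A_{\bullet,1/2}$.

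Finally I would close the argument. As $\ch_{10}(F)\neq 0$ forces $\ch_{10}(F)>0$ by Remark \ref{remark14}(iv), we have $\phi(F)\to 0$. If some $Q\to F$ were nonzero, its image $I\subseteq F$ in $\Bl$ would be a quotient of $Q$, hence $I\in\mathcal A_{\bullet,1/2}$ by Lemma \ref{lemma12}, so $\phi(I)\to\tfrac12$; but $I$ being a nonzero subobject of the $\nu^l$-semistable $F$ forces $\phi(I)\preceq\phi(F)\to 0$ via the seesaw property, a contradiction. Hence $\Hom(Q,F)=0$ for every generator, which yields \eqref{eq58} and the torsion-freeness of $\Phi^0 F$. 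I expect the main obstacle to be exactly the $C^2_{2,+}$ case: it is the only generating torsion class whose $\Phi$-transform has nonzero rank, hence the only one where a limiting-phase-$0$ contribution must be excluded, and the crux is the $\ch_{10}$-vanishing supplied by Lemma \ref{lemma7} together with the sub-closedness of $W_{1,X}$.
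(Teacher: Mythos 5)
Your proof is correct and follows essentially the same route as the paper's: both convert \eqref{eq58} via $\Phi^2=\mathrm{id}[-1]$ into $\Hom(\Phi\Coh^{\leq 2}(X)[1],F)=0$, factor morphisms through $\mathcal{H}^0_{\Bl}$, use Lemma \ref{lemma7} to force the relevant $\Fl$-part into $\Fc^{l,0}$ via $\ch_{10}=0$, and conclude with Lemma \ref{lemma12} against $\nu^l$-semistability. The only difference is that you run through the nine generating subcategories of Figure \ref{tetris1} individually, whereas the paper uses the coarser two-piece $(W_{0,X},W_{1,X})$-decomposition of a torsion sheaf; your treatment of $C^2_{2,+}$ is exactly the paper's case (i).
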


\begin{proof}
Suppose $F \in \Bl$ is $\nu^l$-semistable and $\ch_{10}(F) \neq 0$.  The vanishing \eqref{eq58} is equivalent to
\begin{equation}\label{eq21}
  0 = \Hom ( (\Phi \Coh^{\leq 2}(X))[1], F ).
\end{equation}

Note that
\begin{align*}
  \Phi (\Coh^{\leq 2}(X))[1] &\subset \langle \Phi (W_{0,X} \cap \Coh^{\leq 2}(X)), \Phi (W_{1,X} \cap \Coh (\pi)_{\leq 1}) \rangle [1] \\
  &\subset \langle \{ E \in W_{1,X} : \ch_{10}(E)=0\}, (W_{0,X} \cap \Coh (\pi)_{\leq 1}) [-1]\rangle [1],
\end{align*}
and so
\[
  \Phi (\Coh^{\leq 2}(X))[1] \subset \langle \Bl [1], \Bl \rangle.
\]
Since $F \in \Bl$, we have $\Hom (\Bl [1], F)=0$.  Therefore, in order to show the vanishing \eqref{eq21}, we just need to check the following:
\begin{itemize}
\item[(i)] For any $G \in W_{1,X}$ satisfying $\ch_{10}(G)=0$, we have $\Hom_{\Bl} ( \mathcal{H}_{\Bl}^0(G[1]), F)=0$;
\item[(ii)] $\Hom_{\Bl} ( W_{0,X} \cap \Coh (\pi)_{\leq 1}, F)=0$.
\end{itemize}

 To show (i), take any $G \in W_{1,X}$ where $\ch_{10}(G)=0$.  Consider the $(\Tl, \Fl)$-decomposition of $G$ in $\Coh (X)$:
\[
0 \to G' \to G \to G'' \to 0.
\]
Then $\mathcal{H}_{\Bl}^0 (G[1])=G''[1]$.  Since $G'$ is a subsheaf of $G$, we have $G' \in W_{1,X}$, too.  Also, $G' \in \Tl$ by construction.  Thus $G' \in W_{1,X} \cap \Tl$. By Lemma \ref{lemma7}, we have $\ch_{10}(G')=0$. This forces $\ch_{10}(G'')=0$, i.e.\ $G'' \in \Fc^{l,0}$.  Then by Lemma \ref{lemma12}, for any morphism $\alpha : G''[1] \to F$ in $\Bl$ we have $\phi (\image \alpha) \to \tfrac{1}{2}$ as $s \to \infty$.  By the $\nu^l$-semistability of $F$, $\image \alpha$ must be zero.  Thus (i) holds.

 To show (ii), take any nonzero morphism $\beta: G \to F$ in $\Bl$ where $G \in W_{0,X} \cap \Coh (\pi)_{\leq 1}$.   From Lemma \ref{lemma12} we have  $\phi (\image \beta) \to \tfrac{1}{2}$ as $s \to \infty$, i.e.\ $\image (\beta)$ destabilises $F$, contradicting the $\nu^l$-semistability of $F$.

 Lastly, if $F=\Phi E[1]$, then $\Phi F = E$, and $E$ must be a torsion-free sheaf.
\end{proof}

As part of Theorem \ref{theorem2} (B), we need to understand the following question: given an object $E \in \Bl$ with $\ch_{10}(E) \neq 0$, when do we have $\Phi (E) \in \Coh (X)$?  We have  the torsion triple \eqref{eq19} in $\Bl$
\[
  (\Fl [1], W_{0,X}, W_{1,X} \cap \Tl),
\]
and we noted in Remark \ref{remark15} that for any $E \in \Bl$, we have $\Phi (E) \in \Coh (X)$ if and only if $E \in \langle \Fl [1], W_{0,X}\rangle$, which is equivalent to either of the following two conditions:
\begin{itemize}
\item[(a)] The $\Phi$-WIT$_1$ component of $H^0(E)$ is zero.
\item[(b)] $\Hom (E,W_{1,X}\cap \Tl)=0$.
\end{itemize}

\begin{lemma}\label{lemma23}
Let $E \in \Bl$ be a $\nu^l$-semistable object with $\ch_{10}(E)>0$.  Then the $\Phi$-WIT$_1$ part of $H^0(E)$ has $\ch$ of the form {\tiny $\begin{pmatrix} 0 & \ast & \ast \\ 0 & 0 & \ast \end{pmatrix}$}.
\end{lemma}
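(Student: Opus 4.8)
The plan is to realize the $\Phi$-WIT$_1$ part $G$ of $H^0(E)$ as a quotient of $E$ in $\Bl$, and then to play the $\nu^l$-semistability of $E$ against the positivity of $\bo^2\ch_1$ on the transformed side in order to force the entries $\ch_0(G)$ and $\ch_{11}(G)$ to vanish (the entry $\ch_{10}(G)$ being already zero by Lemma \ref{lemma7}).

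First I would exhibit $G$ as a $\Bl$-quotient of $E$. Since $E \in \Bl$, the canonical sequence $0 \to H^{-1}(E)[1] \to E \to H^0(E) \to 0$ is short exact in $\Bl$, so $H^0(E)$ is a quotient of $E$ in $\Bl$. Taking the $(W_{0,X},W_{1,X})$-decomposition $0 \to A \to H^0(E) \to G \to 0$, I have $A \in W_{0,X}$ and $G \in W_{1,X}$; as $H^0(E) \in \Tl$ and $\Tl$ is closed under quotients, $G \in W_{1,X}\cap\Tl$, and since all three terms lie in $\Tl \subseteq \Bl$ this is also exact in $\Bl$. Hence $G$ is a quotient of $E$ in $\Bl$, and its kernel has $\ch_{10}=\ch_{10}(E)>0$, so the kernel is nonzero and semistability applies nontrivially. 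If $G=0$ there is nothing to prove, so assume $G \neq 0$. By Lemma \ref{lemma7}, $\ch_{10}(G)=0$ and the transform $\wh{G}$ is a torsion sheaf; writing $\ch(G)=(g_{ij})$ with $g_{10}=0$, it remains to show $g_{00}=0$ and $g_{11}=0$.

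The heart of the argument is a phase comparison. On the one hand $\ch_{10}(E)>0$ makes the real part $hs^2\ch_{10}(E)$ of $\oZw(E)$ dominate as $s\to\infty$, so $\phi(E)\to 0$. On the other hand, the matrix formula for the cohomological Fourier--Mukai transform gives $\ch_{10}(\wh{G})=g_{00}$ and $\ch_{01}(\wh{G})=-g_{11}$; since $\wh{G}$ is a torsion sheaf, $\ch_0(\wh G)=0$ and \cite[Lemma 5.2]{LZ2} yields $g_{00}\geq 0$ and $-g_{11}\geq 0$, whence
\[
  \bo^2\ch_1(\wh{G}) = \tfrac{4h\lambda^2}{\alpha}(-g_{11}) + 2h\lambda^2 g_{00} \geq 0,
\]
with equality if and only if $g_{00}=0$ and $g_{11}=0$. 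Treating $\oZw(G)$ as a Laurent polynomial in $s$ as in Section \ref{phase-computation}, its real part $2h\alpha g_{01}$ stays bounded while the order-$s$ coefficient of its imaginary part equals $h(2g_{11}-\alpha g_{00})=-\tfrac{\alpha}{2\lambda^2}\bo^2\ch_1(\wh{G})$. Thus if $\bo^2\ch_1(\wh{G})>0$ then $\Im\oZw(G)\to -\infty$ and $\phi(G)\to -\tfrac12$. But $G$ is a quotient of $E$ in $\Bl$, so $\nu^l$-semistability of $E$ forces $\phi(E)\preceq\phi(G)$, which together with $\phi(E)\to 0$ excludes $\phi(G)\to -\tfrac12$. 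Therefore $\bo^2\ch_1(\wh{G})=0$, forcing $g_{00}=0$ and $g_{11}=0$; with $g_{10}=0$ this is exactly the claimed shape of $\ch(G)$.

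The main obstacle, and the step requiring the most care, is the legitimacy of the phase limits: one must be certain that $G$ genuinely is a $\Bl$-quotient of $E$ (so that semistability applies in the correct direction) and that the ``leading coefficient in $s$'' reasoning for both $\phi(E)\to 0$ and $\phi(G)\to -\tfrac12$ is rigorous, i.e.\ governed by the polynomial-stability framework of Section \ref{phase-computation}. The remaining sign bookkeeping---passing from $\ch_{10}(\wh G),\ch_{01}(\wh G)\geq 0$ to the vanishing through the single nonnegative quantity $\bo^2\ch_1(\wh G)$---is then routine given the explicit cohomological transform.
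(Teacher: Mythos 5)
Your proof is correct and follows essentially the same strategy as the paper's: realize the $\Phi$-WIT$_1$ part $G$ as a $\Bl$-quotient of $E$, note $\phi(E)\to 0$, and rule out $\phi(G)\to-\tfrac{1}{2}$ by $\nu^l$-semistability. The only difference is organizational: the paper first kills the $\Tc^{l,0}$-component of $G$ via the filtration of Lemma \ref{lemma7} and then handles $\ch_{11}(G)<0$ separately using \cite[Remark 5.17]{LZ2}, whereas you merge both cases into the single nonnegative quantity $\bo^2\ch_1(\wh{G})$ (using \cite[Lemma 5.2]{LZ2} on the torsion sheaf $\wh{G}$), whose strict positivity is exactly what would force $\phi(G)\to-\tfrac{1}{2}$ — a slightly cleaner bookkeeping of the same computation that the paper performs in case (v) of Section \ref{phase-computation}.
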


In other words, if we let $G$ denote the $\Phi$-WIT$_1$ part of $H^0(E)$, then $\wh{G} \in \Coh^{\leq 1}(X)$, and $G$ is a vertical sheaf in the sense of Diaconescu \cite{Dia15}.

\begin{proof}
Since $\ch_{10}(E)>0$, we have $\phi (E) \to 0$ as $s \to \infty$.  Let $G$ denote the $\Phi$-WIT$_1$ part of $H^0(E)$.  Then the canonical morphism of sheaves $H^0(E) \to G$ coming from the $(W_{0,X},W_{1,X})$-decomposition of $H^0(E)$ is a $\Bl$-surjection - this follows from having the torsion triple \eqref{eq19}, and noting that  $G$ necessarily lies in $W_{1,X} \cap \Tl$.  Pre-composing this  with the canonical map $E \to H^0(E)$, we obtain  a surjection $E \twoheadrightarrow G$ in $\Bl$.

From Lemma \ref{lemma7}, we know $G$ has a 2-term filtration in $\Coh (X)$: $G_0 \subseteq G_1 = G$ where $G_0$ is a $\Phi$-WIT$_1$ torsion sheaf and $G_1/G_0 \in \Fc^{l,0}$.   If $G_1/G_0 \neq 0$, then $G_1/G_0$ would be a $\nu^l$-destabilising quotient of $E$  in $\Bl$ since  $\phi (G_1/G_0) \to -\tfrac{1}{2}$ from Table \ref{table1}.  Hence $G \in W_{1,X} \cap \Coh (\pi)_{\leq 1}$.  Then  $\ch_{11}(G) \leq 0$ from \cite[Remark 5.17]{LZ2}.  However, if $\ch_{11}(G)<0$, then $\phi (G)\to -\tfrac{1}{2}$, again destabilising $E$. Thus $\ch_{11}(G)=0$, and the lemma is proved.
\end{proof}

\begin{proof}[Proof of Theorem \ref{theorem2}(B)]
That $\Phi (F') \in \Coh (X)$ is clear.  To show that $\Phi (F')$ is a torsion-free sheaf, it suffices to show the vanishing $\Hom (\Coh^{\leq 2}(X), \Phi (F'))=0$.  From the proof of Lemma \ref{lemma15} (replacing $F$ in that proof by $F'$ in the statement of Theorem \ref{theorem2}(B)), we see that if this vanishing does not hold, then $F'$ would have a nonzero subobject in $\Bl$ with $\phi \to \tfrac{1}{2}$ as $s \to \infty$, which would then destabilise $F$.  Hence $\Phi (F')$ is a torsion-free sheaf.

Next, we show that $\Phi (F')$ is $\mu_{\bo}$-semistable: take any short exact sequence of sheaves on $X$
\[
0 \to B \to \Phi (F') \to C \to 0
\]
where $B, C$ are both torsion-free.  By Lemma \ref{lemma9}, $\Phi[1]$ takes this short exact sequence to an exact sequence in $\Bl$
\[
0 \to \Phi B[1] \to F' \to \Phi C [1] \to 0.
\]
Thus $\Phi B[1]$ is a subobject of $F$ in $\Bl$.  By the $\nu^l$-semistability of $F$, we have $\phi (\Phi B[1]) \preceq \phi (F)$.  This implies $\mu_{\bo}(B) \leq \mu_{\bo}(\Phi F)$.  On the other hand, $\Phi F''[1]$ is a $\Phi$-WIT$_0$ sheaf in $\Coh^{\leq 1}(X)$ by Lemma \ref{lemma23}.  Hence $\mu_{\bo}(\Phi F) = \mu_{\bo}(\Phi F')$, and we have $\mu_{\bo}(B) \leq \mu_{\bo}(\Phi F')$ overall.  Hence $\Phi F'$ is a $\mu_{\bo}$-semistable sheaf, and the proof is complete.
\end{proof}

\section{The Harder-Narasimhan property of limit tilt stability}\label{section-HNlimittilt}

We establish the Harder-Narasimhan (HN) property of the polynomial stability function $\oZw$  on  $\Bl$ in this section.  We will follow Toda's approach  \cite[Theorem 2.29(i)]{Toda1}, where he showed the HN property for  `limit stability' (also a polynomial stability).   In his approach, Toda begins with a torsion pair, which he then refines to the HN filtration. In our case, we begin with a torsion triple.

\subsection{Torsion classes in $\Bl$}

We begin by identifying two torsion classes in $\Bl$, which will give us a torsion triple in $\Bl$ that acts as an estimate of the HN filtration for $\nu^l$-stability on $\Bl$.

\begin{lemma}\label{lemma16}
We have
\begin{itemize}
\item[(a)] $\Ac_\bullet$ and $\Coh (\pi)_0$ are both  Serre subcategories of $\Bl$ and torsion classes in $\Bl$.
\item[(b)] The category $\mathcal A_{\bullet,1/2}$ (defined in \eqref{eq23}) is a torsion class in $\Bl$.
\end{itemize}
\end{lemma}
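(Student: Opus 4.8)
The plan is to verify, for each of the three categories $\mathcal{C}$, the two axioms defining a torsion class in $\Bl$: that $\mathcal{C}$ is closed under quotients and extensions in $\Bl$, and that every $E \in \Bl$ fits in a short exact sequence $0 \to A \to E \to B \to 0$ in $\Bl$ with $A \in \mathcal{C}$ and $B \in \mathcal{C}^\circ$ (the vanishing $\Hom(\mathcal{C},\mathcal{C}^\circ)=0$ being automatic from the definition of $\mathcal{C}^\circ$). The closure properties are already available. By Lemma \ref{lemma30} the subcategories $\Ac_\bullet$ and $\Coh(\pi)_0$ are Serre in $\Bl$, hence in particular closed under quotients and extensions, which settles the closure half of (a). For (b), $\mathcal{A}_{\bullet,1/2}$ is closed under quotients in $\Bl$ by Lemma \ref{lemma12} and closed under extensions by construction, since it is defined in \eqref{eq23} as an extension closure. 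So the genuine content of the lemma is the existence of the torsion decomposition.

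My main approach to the decomposition would be a direct construction of the maximal $\mathcal{C}$-subobject, following the template of the proof of Lemma \ref{lemma5} (itself modeled on Toda). For part (b): given $E \in \Bl$, first peel off the leftmost ($\mu_f=0$) $\mu_f$-HN factor $F \subseteq H^{-1}(E)$, producing a $\Bl$-short exact sequence $0 \to F[1] \to E \to E' \to 0$ that removes the $\Fc^{l,0}[1]$-part, and then extract the maximal sheaf-subobject of $E'$ lying in the sheaf-theoretic torsion class $\langle \Coh^{\leq 1}(X), W_{0,X}\cap \Coh(\pi)_{\leq 1}\rangle$ of $\Coh(X)$; existence of such a maximal subobject is forced by controlling the resulting ascending chain of subsheaves through passage to double duals inside the noetherian category $\Coh(X)$, exactly as in Lemma \ref{lemma5}. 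For part (a) the construction is simpler, since $\Ac_\bullet = \Coh^{\leq 0}(X)$ and $\Coh(\pi)_0$ are themselves sheaf categories and there is no shifted part to peel off. A cleaner, if less self-contained, alternative would be to first record that $\Bl$ is noetherian and then apply \cite[Lemma 1.1.3]{Pol}, which states that in a noetherian abelian category any subcategory closed under quotients and extensions is a torsion class; with noetherianness of $\Bl$ in hand, both (a) and (b) follow at once from the closure properties above.

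The \emph{main obstacle} is precisely this decomposition, and the reason it is delicate is that a $\Bl$-subobject $S \subseteq E$ lying in one of our categories need not be a subsheaf of $E$: it can ``wrap'' into $H^{-1}(E)[1]$ through a nonzero class in $\Ext^1_{\Coh(X)}(S, H^{-1}(E))$, so the torsion part cannot be read off naively from $H^0(E)$. This is exactly why the closure-under-quotients statement of Lemma \ref{lemma12} (rather than a mere Serre property) is the correct input for (b), and why either noetherianness of $\Bl$ or the Toda-style chain-stabilization argument is needed to guarantee that a maximal such subobject exists. Finally, I would note the nesting $\Ac_\bullet \subseteq \Coh(\pi)_0 \subseteq \mathcal{A}_{\bullet,1/2}$, so that together with the torsion triple \eqref{eq19} these torsion classes assemble into the torsion quadruple in $\Bl$ that drives the Harder--Narasimhan argument of this section.
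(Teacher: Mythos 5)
Your proposal is correct and follows essentially the same route as the paper: quotient-closure from Lemma \ref{lemma12}, peeling off the $\mu_f=0$ part of $H^{-1}(E)$, then a maximal-subobject/chain-stabilization argument modeled on Lemma \ref{lemma5}, assembled via the octahedral axiom. The one caveat is that the stabilization step is not quite ``exactly as in Lemma \ref{lemma5}'': because the $U_i$ now live in the larger class $\langle \Coh^{\leq 1}(X), W_{0,X}\cap \Coh(\pi)_{\leq 1}\rangle$, the paper must first apply $\Phi$ to show each quotient $U_{i+1}/U_i$ is a fiber sheaf before the double-dual argument can be run.
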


\begin{proof}
(a) We already proved in Lemma \ref{lemma30} that $\Ac_\bullet$ and $\Coh (\pi)_0$ are Serre subcategories of $\Bl$.  The proofs for their being torsion classes in $\Bl$ are similar to that for $\Ac_{\bullet,1/2}$ below.

\smallskip

(b) We already know $\Ac_{\bullet,1/2}$ is closed under quotient in $\Bl$ from Lemma \ref{lemma12}.  For any $E \in \Bl$, let
\[
0 \to F[1] \to E \to E' \to 0
\]
be the short exact sequence in $\Bl$ such that $F$ is the $\mu_f$-HN factor of $H^{-1}(E)$ with $\mu_f=0$.  Then $H^{-1}(E') \in \Fc^{l,-}$ and $\Hom (\Fc^{l,0}[1],E')=0$.

If $E' \notin \Ac_{\bullet,1/2}^\circ$, then there must exist an object $U \in \langle \Coh^{\leq 1}(X), W_{0,X} \cap \Coh (\pi)_{\leq 1}\rangle$ and a nonzero morphism $\beta : U \to E'$ in $\Bl$.  Since $\Ac_{\bullet,1/2}$ is closed under $\Bl$-quotients and $\Hom (\Fc^{l,0}[1],E')=0$ from the previous paragraph, we can replace $U$ by $\image \beta$ and assume that $\beta$ is a $\Bl$-injection.

Suppose we have an ascending chain
\[
 U_1 \subseteq U_2 \subseteq \cdots \subseteq U_m \subseteq \cdots \subseteq E'
\]
in $\Bl$ where $U_i \in \langle \Coh^{\leq 1}(X), W_{0,X} \cap \Coh (\pi)_{\leq 1}\rangle$ for all $i$.  We will show that this chain stabilises.  Let $G_i$ denote the cokernel of the inclusion $U_i \subseteq E'$; then we have the following short exact sequences in $\Bl$ for each $i$:
\begin{equation}\label{eq40}
0 \to U_i \to E' \overset{\alpha_i} \to G_i \to 0
\end{equation}
and
\begin{equation}\label{eq41}
0 \to U_{i+1}/U_i \to G_i \to G_{i+1} \to 0.
\end{equation}
We also have
\begin{equation}\label{eq42}
0 \to U_i \to U_{i+1} \to U_{i+1}/U_i \to 0
\end{equation}
for each $i$.   Since  \eqref{eq42} is a short exact sequence of sheaves, it is easy to see  $U_{i+1}/U_i \in \langle \Coh^{\leq 1}(X), W_{0,X} \cap \Coh (\pi)_{\leq 1}\rangle$ for each $i$.  Also, from \eqref{eq40}, \eqref{eq42} and recalling $\Phi \Bl \subset D^{[0,1]}_{\Coh (X)}$, we obtain an ascending chain of  sheaves
\[
\Phi^0U_1 \subseteq \Phi^0 U_2 \subseteq \cdots \subseteq \Phi^0 E',
\]
and so the $\Phi^0 U_i$ must stabilise.  Therefore, omitting a finite number of terms if necessary, we can now assume that all the $\Phi^0 U_i$ are isomorphic.  As a result, we have the long exact sequence of sheaves from \eqref{eq42}:
\begin{equation}\label{eq43}
0 \to \Phi^0 (U_{i+1}/U_i) \to \Phi^1 U_i \to \Phi^1 U_{i+1} \to \Phi^1 (U_{i+1}/U_i) \to 0.
\end{equation}

Notice that every object $E$ in either $\Coh^{\leq 1}(X)$ or $W_{0,X} \cap \Coh (\pi)_{\leq 1}$ satisfies the following property:
\[
  \textnormal{The WIT$_1$ part of $E$ is a fiber sheaf.}
\]
Since this property is preserved under extension of sheaves, every object in the extension closure $\langle \Coh^{\leq 1}(X), W_{0,X} \cap \Coh (\pi)_{\leq 1}\rangle$ also satisfies this property.  As a result, all the terms in \eqref{eq43} are fiber sheaves.  In particular, $\Phi^j (U_{i+1}/U_i)$ is a fiber sheaf for all $j$, and so  $U_{i+1}/U_i$ itself is a fiber sheaf.

From \eqref{eq41}, we have the long exact sequence of coherent sheaves
\begin{equation}\label{eq44}
0 \to H^{-1}(G_i) \to H^{-1}(G_{i+1}) \to U_{i+1}/U_i \to H^0(G_i) \to H^0(G_{i+1}) \to 0
\end{equation}
which yields $H^{-1}(G_i)^{\ast \ast} \cong H^{-1}(G_{i+1})^{\ast \ast}$ since $U_{i+1}/U_i \in \Coh^{\leq 1}(X)$.  Thus  the ascending chain of sheaves  $H^{-1}(G_i)$ stabilises since they are all subsheaves of $H^{-1}(G_i)^{\ast \ast}$, which is independent of $i$.  On the other hand, \eqref{eq41} also gives a chain of surjections of sheaves
\[
  H^0(G_i) \twoheadrightarrow H^0(G_{i+1}) \twoheadrightarrow \cdots.
\]
Thus the $H^0(G_i)$ also stabilise, and from \eqref{eq44} we see that $U_{i+1}/U_i =0$ for $i \gg 0$, i.e.\  the $U_i$ stabilise as well.  Therefore, we can pick a maximal subobject $U$ of $E'$ lying in $\langle \Coh^{\leq 1}(X), W_{0,X} \cap \Coh (\pi)_{\leq 1}\rangle$.

Continuing as in the proof of Lemma \ref{lemma5}, we consider the surjections
\[
  E \twoheadrightarrow E' \twoheadrightarrow E'/U
\]
in $\Bl$.  Applying the octahedral axiom to these surjections gives the diagram
\[
\scalebox{0.8}{\xymatrix{
   & & & F[2] \ar[ddd] \\
  & & & \\
  & E' \ar[dr] \ar[uurr] & & \\
  E \ar[ur] \ar[rr] & & E'/U \ar[r] \ar[dr] & M [1] \ar[d] \\
  & & & U [1]
}
}
\]
where $M = \kernel (E \to E'/U)$ and all straight lines are exact triangles.  From the $\Bl$-short exact sequence
\[
0 \to F[1] \to M \to U \to 0
\]
we see that $H^{-1}(M) \cong F$ and $H^0(M) \cong U$.

Now, $E'/U$ lies in $\Ac_{\bullet,1/2}^\circ$ because:
\begin{itemize}
\item $\Hom (\langle \Coh^{\leq 1}(X), W_{0,X} \cap \Coh (\pi)_{\leq 1}\rangle, E'/U)=0$ by the maximality of $U$.
\item $\Hom (\Fc^{l,0}[1], E'/U)=0$.  To see this, consider the short exact sequence $0 \to U \to E' \to E'/U \to 0$ in $\Bl$ and the exact sequence of sheaves it yields:
    \[
    0 \to H^{-1}(E') \to H^{-1}(E'/U) \to U.
    \]
    Since $U \in \Coh (\pi)_{\leq 1}$ (i.e.\ $U$ is a sheaf that vanishes on the generic fiber of $\pi$) and $H^{-1}(E') \in \Fc^{l,-}$ by construction, we have $H^{-1}(E'/U) \in \Fc^{l,-}$ as well, which gives $\Hom (\Fc^{l,0}[1], E'/U)=0$.
\end{itemize}
We have now proved that every object in $\Bl$ is the extension of an object in $\Ac_{\bullet,1/2}^\circ$ by an object in $\Ac_{\bullet,1/2}$, showing $\Ac_{\bullet,1/2}$ is a torsion class in $\Bl$.
\end{proof}

Let us define $\mathcal A_{1/2,0}$ to be the  extension closure
\begin{align}
  \mathcal A_{1/2,0} &= \langle \mathcal A_{\bullet,1/2}, \scalea{\gyoung(;;+;*,;;0;*)},  \scalea{\gyoung(;;*;*,;+;*;*)},\scalea{\gyoung(;+;*;*,;+;*;*)},\Fw^{l,-}[1]\rangle \notag\\
  &= \langle \Fl[1],
  \xymatrix @-2.3pc{
\scalea{\gyoung(;;;,;;;+)} & \scalea{\gyoung(;;;+,;;;+)} & \scalea{\gyoung(;;;*,;;+;*)} & \scalea{\gyoung(;;+;*,;;+;*)} & \scalea{\gyoung(;;*;*,;+;*;*)} &  \scalea{\gyoung(;+;*;*,;+;*;*)}  \\
& \scalea{\gyoung(;;;+,;;;0)} & & \scalea{\gyoung(;;+;*,;;0;*)} & & \\
& \scalea{\gyoung(;;;+,;;;-)} & & & &
}  \rangle. \label{eq49}
\end{align}
Note that, for every object in $\mathcal A_{1/2,0}$, we have $\phi \to \tfrac{1}{2}$ or $0$ as $s \to \infty$.  Also, $\mathcal A_{1/2,0}$ is very similar to the extension closure of all objects in Table \ref{table1} such that $\phi \to \tfrac{1}{2}$ or $\phi \to 0$ as $s \to \infty$, except that we use the category $\scalea{\gyoung(;+;*;*,;+;*;*)}$ instead of  $\Tw^{l,+}$ in the definition of $\mathcal A_{1/2,0}$.

\begin{lemma}\label{lemma17}
The category $\mathcal A_{1/2,0}$ is a torsion class in $\Bl$.
\end{lemma}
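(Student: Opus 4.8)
The plan is to recognise $\mathcal A_{1/2,0}$ as being glued from a torsion class of $\Coh(X)$ sitting inside $\Tl$ together with the \emph{whole} of $\Fl[1]$, and then to transfer the torsion-class property across the tilt $(\Tl,\Fl)$ of Lemma \ref{lemma3}. Since $\Fc^{l,-}[1]$ is among the generators, the shifted torsion-free part $\Fl[1]=\langle \Fc^{l,0}[1],\Fc^{l,-}[1]\rangle$ lies entirely in $\mathcal A_{1/2,0}$, while the remaining generators are honest coherent sheaves; thus $\mathcal A_{1/2,0}=\langle \Fl[1],\mathcal S\rangle$, where $\mathcal S\subseteq\Coh(X)$ is the extension closure of $C^0, C^1_{0,+}, C^1_1, C^2_{1,+}, C^2_{2,+}, C^3_{2,+}, C^1_{0,0}, C^1_{0,-}$ and $C^2_{1,0}$. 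First I would record that $\mathcal S\subseteq\Tl$: each generator is either in $W_{0,X}$ or is a torsion sheaf, hence lies in $\Tl$ by Remark \ref{remark14}(i),(iii), and $\Tl$ is extension-closed.

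The first main step is to prove that $\mathcal S$ is a torsion class in $\Coh(X)$. Extension-closedness is automatic, so by noetherianity of $\Coh(X)$ and \cite[Lemma 1.1.3]{Pol} it suffices to check closure under quotients, and it is enough to do this on the generators. The generators $C^1_{0,0}, C^1_{0,-}, C^2_{1,0}$ (together with $C^2_{1,+}$) all lie in the torsion class $\mathcal T'$ of \eqref{eq46} consisting of columns $1$–$3$ together with rows $1$–$2$ of column $4$ (a torsion class by Lemma \ref{lemma27}); since $\mathcal T'\subseteq\mathcal S$ and $\mathcal T'$ is quotient-closed, every $\Coh(X)$-quotient of these generators again lies in $\mathcal S$. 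The remaining generators $C^0, C^1_{0,+}, C^1_1, C^2_{2,+}, C^3_{2,+}$ lie in $W_{0,X}$; using the generation results of Section \ref{sec-generation}, that $W_{0,X}$ is generated as a torsion class by the top row of Figure \ref{tetris1} (so $W_{0,X}\subseteq\mathcal S$) and is itself closed under quotients, every quotient of these stays in $W_{0,X}\subseteq\mathcal S$. Hence $\mathcal S$ is quotient-closed, and therefore a torsion class $(\mathcal S,\mathcal S^\circ)$ in $\Coh(X)$.

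The second main step is the transfer lemma: for any torsion class $\mathcal S\subseteq\Tl$ of $\Coh(X)$, the category $\langle\Fl[1],\mathcal S\rangle$ is a torsion class in $\Bl=\langle\Fl[1],\Tl\rangle$. For closure under $\Bl$-quotients one notes that $E\in\langle\Fl[1],\mathcal S\rangle$ iff $H^{-1}(E)\in\Fl$ and $H^0(E)\in\mathcal S$; a $\Bl$-surjection $E\twoheadrightarrow P$ induces a $\Coh(X)$-surjection $H^0(E)\twoheadrightarrow H^0(P)$ on cohomology, so $H^0(P)\in\mathcal S$ while $H^{-1}(P)\in\Fl$ automatically, whence $P\in\langle\Fl[1],\mathcal S\rangle$. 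For the torsion decomposition I would take $E\in\Bl$, decompose $H^0(E)\in\Tl$ in $\Coh(X)$ as $0\to S\to H^0(E)\to R\to 0$ with $S\in\mathcal S$ and $R\in\mathcal S^\circ$ (noting $R\in\Tl$ as $\Tl$ is quotient-closed), then assemble the torsion part of $E$ as the extension of $S$ by $H^{-1}(E)[1]$, with torsion-free part $R\in\mathcal S^\circ\cap\Tl$. The orthogonality is immediate: $\Hom_{\Bl}(\Fl[1],\Tl)=\Ext^{-1}_{\Coh}(\Fl,\Tl)=0$ and $\Hom_{\Bl}(\mathcal S,\mathcal S^\circ\cap\Tl)=\Hom_{\Coh}(\mathcal S,\mathcal S^\circ)=0$. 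Applying this with the torsion class $\mathcal S$ of Step $1$ yields the lemma.

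The step I expect to be the main obstacle is the quotient-closedness of $\mathcal S$ in Step $1$: because $\mathcal S$ truncates column $4$ at row $2$ yet retains all of columns $5$ and $6$ (row $1$), it is \emph{not} literally one of the nested torsion classes of \eqref{eq46}, so closure under quotients is not formal. The delicate point is ruling out quotients landing in the excluded phase-$(-\tfrac12)$ pieces $C^2_{1,-}$ and $\Tc^{l,0}$; these are excluded precisely because $\Coh(\pi)_{\leq 1}$ is quotient-closed (handling the vertical generators via $\mathcal T'$ above) and because $W_{0,X}$ is quotient-closed and disjoint from the $W_{1,X}$-supported $C^2_{1,-},\Tc^{l,0}$ (handling the higher-dimensional $W_{0,X}$-generators). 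Pinning down the generation statement that $W_{0,X}$ is generated by the top row of Figure \ref{tetris1}, and that its quotients therefore stay inside $\mathcal S$, is where I would concentrate the care.
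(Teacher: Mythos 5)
Your proposal follows essentially the same route as the paper's proof: write $\mathcal A_{1/2,0}=\langle\Fl[1],\mathcal E\rangle$ for a sheaf-level category $\mathcal E$ (your $\mathcal S$), show that $\mathcal E$ is a torsion class in $\Coh(X)$ contained in $\Tl$, and then obtain the torsion decomposition in $\Bl$ by taking the maximal $\mathcal E$-subsheaf $A$ of $H^0(E)$ and pulling it back along $E\twoheadrightarrow H^0(E)$, with torsion-free part $H^0(E)/A\in\mathcal E^\circ\cap\Tl$. Your ``transfer lemma'' is exactly the second half of the paper's proof (the paper phrases the same construction via the octahedral axiom), and your Step 1 is an inline re-derivation of the paper's Lemma \ref{lemma28}, which the paper simply cites. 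Your treatment of the generators lying in the seventh term of \eqref{eq46} is correct and matches Lemma \ref{lemma27}.

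The one step that does not survive scrutiny is your justification of $W_{0,X}\subseteq\mathcal S$, namely the claim that $W_{0,X}$ is generated by the top row of Figure \ref{tetris1}. Read as an extension closure, this is false: take $T$ a nontrivial degree-zero line bundle on a fibre (so $T\in C^1_{0,0}\subseteq W_{1,X}$) and $F$ a $\Phi$-WIT$_0$ torsion-free sheaf; an extension $0\to T\to Q\to F\to 0$ whose class corresponds under $\Ext^1(F,T)\cong\Hom(\wh{F},\wh{T})$ to a surjection $\wh{F}\twoheadrightarrow\wh{T}$ is $\Phi$-WIT$_0$, yet its maximal torsion subsheaf is the $W_{1,X}$-object $T$, so $Q$ admits no filtration by top-row categories (these are all contained in $W_{0,X}$, and the torsion ones among them would have to absorb $T$). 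Read instead as ``smallest torsion class containing the top row'', the claim is useless here without circularity, since deducing $W_{0,X}\subseteq\mathcal S$ from it would require already knowing that $\mathcal S$ is a torsion class. The containment you actually need, $W_{0,X}\subseteq\mathcal S$, is nonetheless true --- it is Remark \ref{remark8}, extracted from the last step of the proof of Lemma \ref{lemma28} --- precisely because $\mathcal S$ also contains $C^1_{0,0}$, $C^1_{0,-}$ and $C^2_{1,0}$, so that only the factors $C^2_{1,-}$, $C^3_{2,0}$, $C^3_{2,-}$ need to be excluded; ruling out the $C^2_{1,-}$ factor is the genuine content of that step and is not a formal consequence of $W_{0,X}$ being quotient-closed. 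If you replace your generation claim by an appeal to Lemma \ref{lemma28} (or reprove its last step), the rest of your argument closes up and coincides with the paper's.
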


\begin{proof}
For the purpose of this proof, let us write
\[
  \mathcal E :=  \xymatrix @-2.3pc{
\scalea{\gyoung(;;;,;;;+)} & \scalea{\gyoung(;;;+,;;;+)} & \scalea{\gyoung(;;;*,;;+;*)} & \scalea{\gyoung(;;+;*,;;+;*)} & \scalea{\gyoung(;;*;*,;+;*;*)} &  \scalea{\gyoung(;+;*;*,;+;*;*)}  \\
& \scalea{\gyoung(;;;+,;;;0)} & & \scalea{\gyoung(;;+;*,;;0;*)} & & \\
& \scalea{\gyoung(;;;+,;;;-)} & & & &
},
\]
which is a torsion class in $\Coh (X)$ by Lemma \ref{lemma28}. Clearly $\mathcal{E} \subseteq \{H^0(E) : E \in \mathcal{A}_{1/2,0}\}$.  On the other hand, since $\mathcal{A}_{1/2,0}$ is defined to be the extension closure of $\Fl [1]$ and $\mathcal{E}$, and $\mathcal{E}$ is a torsion class in $\Coh (X)$, we have $\{H^0(E) : E \in \mathcal{A}_{1/2,0}\} \subseteq \mathcal{E}$, and so
\[
  \mathcal E = \{ H^0(E) : E \in \mathcal A_{1/2,0}\}.
\]

 For any $E \in \Bl$, let $A$ denote the maximal subsheaf of $H^0(E)$ in $\mathcal E$, and let $E''$ denote the preimage of $A$ under the canonical $\Bl$-quotient $E \twoheadrightarrow H^0(E)$.  Then $H^{-1}(E'')=H^{-1}(E)$ while $H^0(E'')=A$, i.e.\ $E'' \in \mathcal A_{1/2,0}$.  Thus we have the following commutative diagram of short exact sequences in $\Bl$
 \[
 \xymatrix{
   0 \ar[r] & H^{-1}(E)[1] \ar[r] & E \ar[r] & H^0(E) \ar[r] & 0 \\
   0 \ar[r] & H^{-1}(E)[1] \ar@{=}[u] \ar[r] & E'' \ar[r] \ar[u] & A \ar[u] \ar[r] & 0
   }
 \]
 where the middle and the right vertical arrows are injections in $\Bl$ (since the cokernel of the sheaf injection $A \hookrightarrow H^0(E)$ lies in $\Tl$, the morphism $A \to H^0(E)$ is also a $\Bl$-injection).

 On the other hand, $E' := H^0(E)/A$ lies in $\mathcal E^\circ = \{ F \in \Coh (X): \Hom (\mathcal{E},F)=0\}$.  Also, applying the octahedral axiom to the composition of $\Bl$-injections $H^{-1}(E)[1] \to E'' \to E$ gives $E/E'' \cong H^0(E)/A$.

 Overall,  we have shown that every object $E \in \Bl$ can be written as the extension of an object  in $\mathcal{E}^\circ$ by an object in  $\mathcal A_{1/2,0}$.  Since $\Hom (\mathcal A_{1/2,0}, \mathcal E^\circ)=0$, $\mathcal{A}_{1/2,0}$ is indeed a torsion class in $\Bl$, and $\mathcal E^\circ$ is the corresponding torsion-free class in $\Bl$.
\end{proof}

\subsection{Finiteness properties}

Since $\Ac_\bullet, \Ac_{\bullet,1/2}$ and $\Ac_{1/2,0}$ are all torsion classes in $\Bl$ and we have the inclusions $\Ac_\bullet \subset \Ac_{\bullet,1/2} \subset \Ac_{1/2,0}$, we have the  following torsion quadruple in $\Bl$
\begin{equation}\label{eq35}
  \left( \Ac_\bullet,\,\,\, \mathcal A_{\bullet,1/2} \cap \Ac_\bullet^\circ, \,\,\, \mathcal A_{1/2,0} \cap  \mathcal A_{\bullet,1/2}^\circ, \,\,\, \mathcal A_{1/2,0}^\circ \right).
\end{equation}
That is, every object $E \in \Bl$ has a filtration
\[
  E_0 \subseteq E_1 \subseteq E_2 \subseteq E_3 = E
\]
in $\Bl$ where
\begin{itemize}
\item $E_0 \in \Ac_\bullet$,
\item $E_1/E_0 \in  \mathcal A_{\bullet,1/2} \cap \Ac_\bullet^\circ$,
\item $E_2/E_1 \in \mathcal A_{1/2,0} \cap  \mathcal A_{\bullet,1/2}^\circ$, and
\item $E_3/E_2 \in  \mathcal A_{1/2,0}^\circ$.
\end{itemize}
This filtration can be constructed by first taking $E_2$ to be the $\mathcal A_{1/2,0}$ part of $E$,  then taking $E_1$ to be the $\mathcal A_{\bullet,1/2}$ part of $E_2$, and finally taking $E_0$ to be the $\Ac_\bullet$ part of $E_1$.

\begin{proposition}\label{pro1}
The following finiteness properties hold:
\begin{itemize}
\item[(1)] For $\Ac = \mathcal A_{\bullet,1/2}\cap \Ac_\bullet^\circ$:
\begin{itemize}
\item[(a)] There is no infinite sequence of strict monomorphisms in $\Ac$
\begin{equation}\label{eq25}
  \cdots \hookrightarrow E_n \hookrightarrow \cdots \hookrightarrow E_1 \hookrightarrow E_0
\end{equation}
where $\phi (E_{i+1}) \succ \phi (E_i)$ for all $i$.
\item[(b)] There is no infinite sequence of strict epimorphisms in $\Ac$
\begin{equation}\label{eq26}
  E_0 \twoheadrightarrow E_1 \twoheadrightarrow \cdots \twoheadrightarrow E_n \twoheadrightarrow \cdots.
\end{equation}
\end{itemize}
\item[(2)] For $\Ac = \Ac_{1/2,0} \cap \Ac_{\bullet,1/2}^\circ$:
    \begin{itemize}
    \item[(a)] There is no infinite sequence of strict monomorphisms \eqref{eq25} in $\Ac$.
    \item[(b)] There is no infinite sequence of strict epimorphisms \eqref{eq26} in $\Ac$.
    \end{itemize}
\item[(3)] For $\Ac = \Ac_{1/2,0}^\circ$:
  \begin{itemize}
  \item[(a)] There is no infinite sequence of strict monomorphisms \eqref{eq25} in $\Ac$.
  \item[(b)] There is no infinite sequence of strict epimorphisms \eqref{eq26} in $\Ac$.
  \end{itemize}
\end{itemize}
\end{proposition}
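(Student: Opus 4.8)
The plan is to verify all six statements by one mechanism: translate each chain in $\Bl$ into chains of coherent sheaves via the cohomology functors $H^{-1},H^0$ (and, where convenient, $\Phi^0,\Phi^1$), and then exploit two features of the factors of \eqref{eq35}. First, on each of the three factors every object has the same limiting phase ($\tfrac12$, $0$, or a strictly negative value), so comparing phases of two objects of one factor amounts to comparing a hierarchy of Laurent coefficients of $\oZw$. Writing $\oZw(E)=ha_{10}s^2+ih(2a_{11}-\alpha a_{00})s+2h\alpha a_{01}+i\alpha a_{02}s^{-1}$ as in \eqref{eq4}, these coefficients are, up to positive constants, $a_{10},\ 2a_{11}-\alpha a_{00},\ a_{01},\ a_{02}$. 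By Remark \ref{remark14}(iv) the top one, $a_{10}=\ch_{10}$, is a nonnegative integer on every object of $\Bl$, and by Lemma \ref{lemma4} the value $\oZw(E)$ lies in $-i\mathbb{H}_0$ for $s\gg 0$, which pins down the signs of the first nonvanishing coefficients.

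For condition (b) (no infinite strict epimorphisms) I would argue uniformly for all three factors. A sequence $E_0\twoheadrightarrow E_1\twoheadrightarrow\cdots$ in $\Ac$ produces a strictly ascending chain of kernels $K_1\subsetneq K_2\subsetneq\cdots\subseteq E_0$ in $\Bl$. I would show this chain stabilizes by passing to standard cohomology: the sheaves $H^{-1}(K_i)$ form an ascending chain of subsheaves of $H^{-1}(E_0)$, hence stabilize since $\Coh(X)$ is noetherian, and the $H^0$-parts are controlled by the same double-dual bookkeeping used in the proofs of Lemma \ref{lemma5} and Lemma \ref{lemma16}(b) (the reflexive hulls $H^{-1}(G_i)^{\ast\ast}$ along the chain are constant, so the surviving $H^0$-quotients eventually inject into a fixed sheaf). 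Stabilization contradicts strictness, giving (1)(b), (2)(b), (3)(b) at once.

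Condition (a) (no infinite strict monomorphisms with increasing phase) is the crux. Such a sequence $\cdots\hookrightarrow E_{i+1}\hookrightarrow E_i\hookrightarrow\cdots\hookrightarrow E_0$ is a descending chain of subobjects of $E_0$ with quotients $Q_i=E_i/E_{i+1}\in\Bl$; the hypothesis $\phi(E_{i+1})\succ\phi(E_i)$ and the see-saw property force $\phi(Q_i)\preceq\phi(E_i)\prec\phi(E_{i+1})$, so the successive quotients have small phase. I would then run a lexicographic descent on the coefficient hierarchy above. Additivity of $\ch$ and nonnegativity of $a_{10}$ on $\Bl$ make $a_{10}(E_i)$ a weakly decreasing sequence of nonnegative integers bounded by $a_{10}(E_0)$, hence eventually constant, so $a_{10}(Q_i)=0$ for $i\gg 0$; on such $Q_i$ the positivity $\oZw(Q_i)\in -i\mathbb{H}_0$ then forces $a_{01}(Q_i)\ge 0$, and, where this vanishes, the next coefficient $2a_{11}-\alpha a_{00}$ to be $\ge 0$, and so on. A strict increase of phase at each step must be witnessed by a strict change in the first coefficient in which $E_i$ and $E_{i+1}$ differ; since these coefficients of the subobjects $E_i$ are bounded (by those of the fixed object $E_0$, and using the $\mu^\ast_{\max}$-type bounds of Lemmas \ref{lemma1}, \ref{lemma2} and \ref{lemma31} to bound the secondary slope governing the phase comparison) and essentially integral, only finitely many strict steps are possible, contradicting the infinitude of the chain.

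The technical heart is condition (a), and within it the bookkeeping of the degenerate cases where the leading Laurent coefficient of some $E_i$ vanishes and the phase comparison drops to a lower-order term in $s$ (compare the case analysis behind Table \ref{table1}, e.g.\ (iii)(B)(b), where the sign of $a_{02}$ decides whether $\phi\to 0^{+}$ or $0^{-}$). The subtlety is that the phase-ordering reads the coefficients by \emph{decreasing} power of $s$, whereas the positivity constraint from Lemma \ref{lemma4} reads the real part first; one must check that on each factor only a controlled, finite set of coefficients is ever relevant and that the chosen secondary invariant is genuinely monotone, not merely bounded. Organizing this as a single clean lexicographic induction, uniform across the three factors rather than re-deriving the phase of each object type in Table \ref{table1}, is the step requiring the most care.
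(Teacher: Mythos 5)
Your opening move --- $\ch_{10}\geq 0$ on $\Bl$ forces $\ch_{10}(E_i)$ to stabilise, after which all successive quotients and kernels have $\ch_{10}=0$ --- is exactly how the paper's proof begins, but both halves of your plan break down after that point. For the epimorphism statements, the ``uniform'' double-dual argument fails in case (2)(b): there the kernels $K_i$ are forced (by $K_i\in\Ac_{\bullet,1/2}^{\circ}$ and $\ch_{10}(K_i)=0$) to be \emph{pure two-dimensional} $\Phi$-WIT$_1$ sheaves in $\Coh(\pi)_{\leq 1}$, so the exact sequences $0\to H^{-1}(E_i)\to H^{-1}(E_{i+1})\to K_i\to 0$ modify $H^{-1}(E_i)$ in codimension one and the reflexive hulls $H^{-1}(E_i)^{\ast\ast}$ are \emph{not} constant along the chain. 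The paper has to apply $\Phi[1]$ to these sequences, use Lemma \ref{lemma20} to place the transforms $\wh{K_i}$ in $\Coh^{\leq 1}(X)$, and separately prove that the transforms $\wh{H^{-1}(E_i)}$ are torsion-free before the double-dual bound applies --- none of which is supplied by the bookkeeping of Lemma \ref{lemma5}.

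For the monomorphism statements, the proposed lexicographic descent on the Laurent coefficients of $\oZw$ is not available. After $\ch_{10}$ stabilises, the next coefficient in the phase ordering is $2a_{11}-\alpha a_{00}$, but the positivity $\oZw\in -i\mathbb{H}_0$ constrains $a_{01}\geq 0$ first and says nothing about the sign of $2a_{11}-\alpha a_{00}$ unless $a_{01}=0$; consequently this coefficient is neither monotone along the chain nor (for irrational $\alpha$) valued in a discrete set, so ``only finitely many strict steps'' does not follow. The paper instead runs, in each factor, a descent on a category-specific nonnegative \emph{integral} invariant whose positivity comes from the structure of that factor rather than from $\oZw$: $HD\ch_{01}(H^{0}(E_i))$ in case (1) (nonnegative because $H^{0}(E_i)$ is torsion), $D\ch_{11}(\Phi^{1}(H^{0}(E_i)))$ in case (2) (nonnegative because these transforms lie in $W_{0,X}\cap\Coh^{\leq 1}(X)$), and $HD\ch_{01}(\wh{E_i})$ in case (3) (the $\wh{E_i}$ being torsion sheaves by Lemma \ref{lemma7}). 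Note also that in cases (2)(a) and (3)(a) the paper concludes $G_i=0$ outright by a purity/orthogonality argument ($G_i$ is simultaneously a pure two-dimensional sheaf and a fiber sheaf, resp.\ lies in both $\Ac_{1/2,0}$ and $\Ac_{1/2,0}^{\circ}$), making no use of the phase hypothesis; a scheme that relies on a strict phase increase at every step has no mechanism to reach these stronger conclusions.
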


\begin{proof}
Let us fix some notations first.  In the proofs of (1)(a), (2)(a) and (3)(a), we will consider the $\Bl$-short exact sequences
\begin{equation}\label{eq27}
  0 \to E_{i+1} \overset{\beta_i}{\to} E_i \to G_i \to 0.
\end{equation}
On the other hand, in the proofs of (1)(b), (2)(b) and (3)(b), we will consider the $\Bl$-short exact sequences
\begin{equation}\label{eq28}
  0 \to K_i \to E_i \to E_{i+1} \to 0.
\end{equation}
Since $\ch_{10}(-) \geq 0$ on $\Bl$ by Remark \ref{remark14}(iv), from \eqref{eq27} (resp.\ \eqref{eq28}) we know that $\ch_{10}(E_i)$ is a decreasing sequence of nonnegative integers when  we are proving part (a) (resp.\ part (b)) of (1), (2) or (3).  Therefore, by omitting a finite number of terms in \eqref{eq27} (resp.\ \eqref{eq28}) if necessary, we can assume that the   $\ch_{10}(E_i)$ are constant.  This also implies that $\ch_{10}(G_i)=0$ and $\ch_{10}(K_i)=0$ for all $i \geq 0$, which in turn implies $\ch_{10}(H^{-1}(G_i)[1]), \ch_{10}(H^0(G_i)), \ch_{10}(H^{-1}(K_i)[1])$ and $\ch_{10}(H^0(K_i))$ are all zero for all $i$.

\smallskip

Part (1): $\Ac = \mathcal A_{\bullet,1/2}\cap \Ac_\bullet^\circ$.  For (a), suppose we have an infinite sequence of strict monomorphisms in $\Ac$ as in \eqref{eq25}.  This gives the inclusions of sheaves $\alpha_i : H^{-1}(E_{i+1}) \hookrightarrow H^{-1}(E_i)$ for all $i$, and so $\rk (H^{-1}(E_i))$ must become constant for $i$ large enough.  In that case, $\cokernel (\alpha_i)$ is  a torsion sheaf and lies in $\Fl$, and so must be zero.  Hence $\alpha_i$ is an isomorphism for $i \gg 0$, i.e.\  the $H^{-1}(E_i)$ stabilise.

Consider the short exact sequences \eqref{eq27} in $\Bl$.  Since $\beta_i$ is a strict monomorphism in $\Ac$, the cokernel $G_i$ lies in $\Ac$.  Also,  $H^0(E_{i+1})$ is a torsion sheaf by the definition of $\Ac_{\bullet,1/2}$.   The inclusion $H^{-1}(G_i) \hookrightarrow H^0(E_{i+1})$ then forces $H^{-1}(G_i)$ to be zero, so we can assume $G_i = H^0(G_i)$ and we have the short exact sequence of sheaves
\[
 0 \to H^0 (E_{i+1}) \to H^0 (E_i) \to H^0 (G_i) \to 0.
\]

Recall that $\ch_{01} (-)\geq 0$ on $\Ac_{\bullet,1/2}$ by construction.  Hence $HD\ch_{01} (H^0(E_i))$ is a decreasing sequence of nonnegative integers, and must stabilise for $i \gg 0$, in which case $\ch_{01}(H^0(G_i))=0$.  From the definition of $\Ac_{\bullet,1/2}$ in \eqref{eq23}, we see that this forces  $H^0(G_i) \in \Coh^{\leq 1}(X)$, implying $\phi (G_i)=\tfrac{1}{2}$; however, this cannot happen under the assumption that $\phi (E_{i+1}) \succ \phi (E_i)$ for all $i$.  Hence such an infinite chain of strict monomorphisms \eqref{eq25} cannot exist.

For (b), suppose we have an infinite sequence of strict epimorphisms in $\Ac$ as in \eqref{eq26}.  The epimorphisms in \eqref{eq26} gives the surjections of sheaves
\[
  H^0(E_0) \twoheadrightarrow H^0(E_1) \twoheadrightarrow \cdots,
\]
which must  stabilise since $\Coh (X)$ is Noetherian. As a result,  we can assume that $H^0(E_i) \twoheadrightarrow H^0(E_{i+1})$ is an isomorphism for all $i \geq 0$ by omitting a finite number of terms.

Consider the short exact sequences \eqref{eq28} in $\Bl$ where $K_i \in \Ac_{\bullet,1/2}$ by assumption.  From  the long exact sequence of cohomology of \eqref{eq28}, we have
 \begin{equation}\label{eq32}
 0 \to H^{-1}(K_i) \to H^{-1}(E_i) \to H^{-1}(E_{i+1}) \to H^0(K_i) \to 0.
 \end{equation}
  Since $H^0(K_i)$ is a torsion sheaf, we have $\rk (H^{-1}(E_i)) \geq \rk(H^{-1}(E_{i+1}))$, and so $\rk (H^{-1}(E_i))$ must stabilise eventually, implying $\rk (H^{-1}(K_i))=0$ for $i \gg 0$, in which case $H^{-1}(K_i)=0$.  Therefore, for large enough $i$ we have the short exact sequence
  \begin{equation}\label{eq29}
  0 \to H^{-1}(E_i) \to H^{-1}(E_{i+1}) \to H^0(K_i) \to 0.
  \end{equation}

  As we observed in part (a), we have $\ch_{01}(-) \geq 0$ for all objects in $\Ac_{\bullet,1/2}$, and so from \eqref{eq28} we see $\ch_{01} (E_i)$ is decreasing and must stabilise.  That is, for $i \gg 0$, we have $K_i=H^0(K_i)$ and $\ch_{01}(K_i)=0$; these imply that $K_i=H^0(K_i) \in \Coh^{\leq 1}(X)$.  Hence the $H^{-1}(E_i)$ are all isomorphic in codimension one for $i \gg 0$ and  we have
  \[
    H^{-1}(E_i) \hookrightarrow H^{-1}(E_{i+1}) \hookrightarrow H^{-1}(E_{i+1})^{\ast \ast},
  \]
  where  $H^{-1}(E_{i+1})^{\ast \ast}$ is constant for $i \gg 0$.  And so $H^{-1}(E_i)$ must stabilise, and the $E_i$ themselves stabilise.

\smallskip

 Part (2): $\Ac = \Ac_{1/2,0} \cap \Ac_{\bullet,1/2}^\circ$.  For (a), consider an infinite sequence of strict monomorphisms in $\Ac$ as in \eqref{eq25}, and define the $G_i$ as in \eqref{eq27}.

From \eqref{eq27}, we have the exact sequence of sheaves
\[
0 \to H^{-1}(E_{i+1}) \to H^{-1}(E_i) \overset{\gamma_i}{\to} H^{-1}(G_i).
\]
By the same argument as in (1)(a), we can assume that the $H^{-1}(E_i)$ are constant.

Recall also that we can assume $\ch_{10}(H^{-1}(G_i))=0$ for all $i$, i.e.\ $H^{-1}(G_i) \in \Fc^{l,0}$ for all $i$.  On the other hand, $G_i  \in \Ac_{\bullet,1/2}^\circ$ by assumption.  This forces $H^{-1}(G_i)=0$, and so  $G_i = H^0(G_i)$ for all $i \geq 0$.

From \eqref{eq27}, we now have the exact sequence of sheaves
\begin{equation}\label{eq31}
0 \to H^0(E_{i+1}) \to H^0(E_i) \to H^0(G_i) \to 0
\end{equation}
for all $i$.  Since $\ch_{10}(G_i)=0$ and $G_i \in \Ac_{1/2,0}$, from the definition of $\Ac_{1/2,0}$ in \eqref{eq49} we see that $G_i$ cannot have any subfactor in $\scalea{\gyoung(;;*;*,;+;*;*)}$ or $\scalea{\gyoung(;+;*;*,;+;*;*)}$.  Hence $G_i \in \Coh(\pi)_{\leq 1}$.  Also, since $G_i \in \Ac_{\bullet,1/2}^\circ$, it follows that $G_i$ must be a pure 2-dimensional sheaf and must be $\Phi$-WIT$_1$.  Applying $\Phi$ to \eqref{eq31}, we obtain the long exact sequence of sheaves
\begin{equation*}
0 \to \Phi^0 (H^0(E_{i+1})) \to \Phi^0 (H^0(E_i)) \to 0
\to \Phi^1 (H^0(E_{i+1})) \to \Phi^1 (H^0(E_i)) \to \Phi^1 (H^0(G_i)) \to 0
\end{equation*}
where $\Phi^1 (H^0(E_{i+1})),  \Phi^1 (H^0(E_i)), \Phi^1 (H^0(G_i))$ all lie in $\Coh^{\leq 1}(X)$ by Lemma \ref{lemma20} below.  Since we have  $\ch_{11}(-) \geq 0$ on $\Coh^{\leq 1}(X)\cap W_{0,X}$, we know from the last four terms of this long exact sequence that  $D\ch_{11}(\Phi^1 (H^0(E_i)))$ is a decreasing sequence of nonnegative integers.  Hence the sequence $D\ch_{11}(\Phi^1 (H^0(E_i)))$ eventually becomes constant, forcing $\ch_{11}(\Phi^1 (H^0(G_i)))=0$.  This means that $G_i = H^0(G_i)$ is a $\Phi$-WIT$_1$ fiber sheaf.  Since we observed earlier that $G_i$ is a pure 2-dimensional sheaf, $G_i$ must be zero, and so the $E_i$ stabilise.

For (b), suppose we have an infinite sequence of strict epimorphisms in $\Ac$ as in \eqref{eq26}.  As in the proof of part (1)(b), we can assume that the $H^0(E_i)$ stabilise.  Let $K_i$ be as in \eqref{eq28}.  We want to show that the $H^{-1}(E_i)$ eventually stabilise as well.

Consider the long exact sequence of sheaves \eqref{eq32}.  Since we can assume $\ch_{10}(H^{-1}(K_i)[1])=0$, we have $H^{-1}(K_i) \in \Fc^{l,0}$, i.e.\ $H^{-1}(K_i)[1] \in \Ac_{\bullet,1/2}$.  However, $E_i \in \Ac_{\bullet,1/2}^\circ$ and we have the inclusions in $\Bl$
\[
  H^{-1}(K_i)[1] \hookrightarrow K_i \hookrightarrow E_i.
\]
Therefore, $H^{-1}(K_i)=0$  and $K_i=H^0(K_i) \in \Ac_{1/2,0}$ for all $i$.  Since $\ch_{10}(K_i)=0$, from the definition of $\Ac_{1/2,0}$ in \eqref{eq49}  we see that $K_i$ must be a torsion sheaf (since any subfactor of $K_i$ from $\scalea{\gyoung(;;*;*,;+;*;*)}$ or $\scalea{\gyoung(;+;*;*,;+;*;*)}$ would have a strictly positive contribution towards $\ch_{10}$).  In particular, $K_i \in \Coh (\pi)_{\leq 1}$.  However, $K_i \in \Ac_{\bullet,1/2}^\circ$, and so $K_i$ is also  pure 2-dimensional and is $\Phi$-WIT$_1$.

The long exact sequence \eqref{eq32} now reduces to
\[
0 \to H^{-1}(E_i) \to H^{-1}(E_{i+1}) \to H^0(K_i) \to 0
\]
where all the terms are $\Phi$-WIT$_1$, and so applying $\Phi[1]$ gives the short exact sequence of sheaves
\begin{equation}\label{eq33}
0 \to \wh{H^{-1}(E_i)} \to \wh{H^{-1}(E_{i+1})} \to \wh{H^0(K_i)} \to 0,
\end{equation}
where $\wh{H^0(K_i)} \in\Coh^{\leq 1}(X)$ by Lemma \ref{lemma20}.

 We claim that $\wh{H^{-1}(E_i)}$ is torsion-free for all $i$.  To see this, fix any $i$ and suppose $\wh{H^{-1}(E_i)}$ has a subsheaf $T$ lying in $\Coh^{\leq 2}(X)$.  Let $T_i$ denote the $\Phi$-WIT$_i$ part of $T$.  Then we have an injection of sheaves $T_0 \hookrightarrow \wh{H^{-1}(E_i)}$ the cokernel of which is $\Phi$-WIT$_0$, and so this injection is taken by $\Phi$ to the injection of sheaves $\wh{T_0} \hookrightarrow H^{-1}(E_i)$.  Note that $f\ch_1(\wh{T_0})=-\ch_0(T_0)=0$.  On the other hand, since $\wh{T_0}$ is a subsheaf of $H^{-1}(E_i)$, which lies in $\Fc^{l,-}$ (because $E_i \in \Ac_{\bullet, 1/2}^\circ$), it follows that $\wh{T_0} \in \Fc^{l,-}$ and  $f\ch_1(\wh{T_0})< 0$ if $\wh{T_0}$ is nonzero.  Thus $T_0$ must vanish, i.e.\ $T=T_1$ lies in $\Coh (\pi)_{\leq 1} \cap W_{1,\Phi}$.  Then $\wh{T} = \Phi T [1] \in \Coh(\pi)_{\leq 1} \cap W_{0,\Phi}$, which is contained in $\Ac_{\bullet, 1/2}$. Now
\begin{align*}
\Hom (T,\wh{H^{-1}(E_i)}) &\cong \Hom (\Phi T, \Phi \wh{H^{-1}(E_i)}) \\
&\cong \Hom (\wh{T}[-1],H^{-1}(E_i)) \\
&\cong \Hom (\wh{T},H^{-1}(E_i)[1]) \\
&=0
\end{align*}
where the last equality holds because  $E_i \in \Ac_{\bullet, 1/2}^\circ$.  Thus $T$ itself must be zero, i.e.\ $\wh{H^{-1}(E_i)}$ is torsion-free.

The short exact sequences \eqref{eq33} now give us inclusions
\[
  \wh{H^{-1}(E_i)} \hookrightarrow \wh{H^{-1}(E_{i+1})} \hookrightarrow (\wh{H^{-1}(E_i)})^{\ast \ast},
\]
where the $(\wh{H^{-1}(E_i)})^{\ast \ast}$ are constant because $\wh{H^0(K_i)} \in \Coh^{\leq 1}(X)$.  Thus the $\wh{H^{-1}(E_i)}$ stabilise, i.e.\ the $H^{-1}(E_i)$ stabilise for $i \gg 0$.

\smallskip

Part (3): $\Ac =  \Ac_{1/2,0}^\circ$.  For (a), consider an infinite sequence of strict monomorphisms as in \eqref{eq25}, and let $G_i$ be as in \eqref{eq27}.  By Remark \ref{remark8}, we have $E_i, G_i \in W_{1,X} \cap \Tl$ for all $i$.  Thus \eqref{eq25} is a sequence of strict monomorphisms in $W_{1,X} \cap \Tl$ where $\rk (E_i)$ is a decreasing sequence.  We can therefore assume that $\rk (E_i)$ is constant and $\rk (G_i)=0$ for all $i$; this means $G_i \in W_{1,X} \cap \Coh (\pi)_{\leq 1}$ by Lemma \ref{lemma7}.

Since every object in $W_{1,X} \cap \Tl$ is a $\Phi$-WIT$_1$ sheaf with $\ch_{10}=0$ (from the decomposition in Lemma \ref{lemma7}),  the short exact sequence \eqref{eq27} is taken by $\Phi [1]$ to the short exact sequence of torsion sheaves
\[
  0 \to \wh{E_{i+1}} \to \wh{E_i} \to \wh{G_i} \to 0.
\]
Since $\ch_{01} \geq 0$ for any torsion sheaf, the  $HD \ch_{01}(\wh{E_i})$ form a decreasing sequence of  nonnegative integers.  Hence  $\ch_{01}(\wh{E_i})$ becomes constant eventually, and we can assume $\ch_{01}(\wh{G_i})=0$.  Since we also know $G_i \in \Coh (\pi)_{\leq 1}$ from the previous paragraph, we have  $\wh{G_i} \in W_{0,X} \cap \Coh^{\leq 1}(X)$, i.e.\ $G_i \in \Phi (W_{0,X} \cap \Coh^{\leq 1}(X))$.  However, $\Phi (W_{0,X} \cap \Coh^{\leq 1}(X))$  is  contained in $\langle \Coh^{\leq 1}(X), \scalea{\gyoung(;;+;*,;;0;*)} \rangle$, which in turn is contained in $\Ac_{1/2,0}$.  Since $G_i \in \Ac_{1/2,0}^\circ$ by assumption, we must have $G_i=0$, i.e.\ the $E_i$ stabilise.

For (b), suppose we have an infinite sequence of strict epimorphisms in $\Ac$ as in \eqref{eq26}, and let $K_i$ be as in \eqref{eq28}.  Again by Remark \ref{remark8}, all the $E_i, K_i$ lie in $W_{1,X} \cap \Tl$, and so \eqref{eq28} is a short exact sequence of coherent sheaves.  Thus the $E_i$ must stabilise since $\Coh (X)$ is Noetherian.
\end{proof}

\begin{lemma}\label{lemma20}
For any $A \in \Ac_{1/2,0}$, let $A_1$ denote the $\Phi$-WIT$_1$ part of $H^0(A)$.  Then $\wh{ A_1} \in \Coh^{\leq 1}(X)$.
\end{lemma}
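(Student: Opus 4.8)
The plan is to pin down the exact shape of $\ch(A_1)$ and then read off $\wh{A_1}$ from the cohomological Fourier–Mukai transform of Section \ref{section-matrixchern}. Concretely, I would aim to show
\[
\ch(A_1) = \begin{pmatrix} 0 & \ast & \ast \\ 0 & 0 & \ast \end{pmatrix},
\quad\text{whence}\quad
\ch(\wh{A_1}) = \begin{pmatrix} 0 & 0 & \ast \\ 0 & \ast & \ast \end{pmatrix},
\]
so that $\wh{A_1} \in \Coh^{\leq 1}(X)$ by \cite[Lemma 5.10]{LZ2}. First I would record that $A_1 \in \mathcal E \cap W_{1,X}$, where $\mathcal E$ is the torsion class of $\Coh(X)$ appearing in \eqref{eq49}. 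Indeed, as noted in the proof of Lemma \ref{lemma17}, $H^0(A) \in \mathcal E = \{H^0(E): E \in \mathcal A_{1/2,0}\}$, and $A_1$ is the $W_{1,X}$-quotient of $H^0(A)$ in its $(W_{0,X},W_{1,X})$-decomposition, hence a sheaf quotient of $H^0(A)$; since $\mathcal E$ is a torsion class it is closed under quotients, so $A_1 \in \mathcal E$. As every $H^0$ of an object of $\Bl$ lies in $\Tl$, we also have $A_1 \in \Tl$.

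Next I would establish the three vanishings $\ch_{10}(A_1) = \ch_{00}(A_1) = \ch_{11}(A_1) = 0$ by playing additivity of the Chern character along an $\mathcal E$-filtration of $A_1$ against the sign constraints coming from $W_{1,X}$. For the fiber degree: $A_1 \in \Tl$ gives $\ch_{10}(A_1) \geq 0$ (Remark \ref{remark14}(iv), Lemma \ref{lemma2}(c)), while $A_1 \in W_{1,X}$ gives $\ch_{10}(A_1) \leq 0$ by \cite[Remark 5.17]{LZ2}, so $\ch_{10}(A_1) = 0$. Writing $A_1$ as an iterated extension of objects in the nine generating categories of $\mathcal E$ in \eqref{eq49}, and noting that among these only $\scalea{\gyoung(;;*;*,;+;*;*)}$ and $\scalea{\gyoung(;+;*;*,;+;*;*)}$ carry a strictly positive $\ch_{10}$ (the other seven having $\ch_{10}=0$), additivity forces the filtration factors in those two categories to vanish, exactly as in the proof of Proposition \ref{pro1}(2)(a). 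Each of the seven remaining generators has $\ch_{00}=0$, so $\ch_{00}(A_1)=0$, i.e. $A_1$ has rank zero. Finally, with $\ch_{00}(A_1)=0$ and $A_1 \in W_{1,X}$, \cite[Remark 5.17]{LZ2} yields $\ch_{11}(A_1) \leq 0$; on the other hand each of the seven surviving generators has $\ch_{11}\geq 0$ (strictly positive only for $\scalea{\gyoung(;;;*,;;+;*)}$ and $\scalea{\gyoung(;;+;*,;;+;*)}$), so additivity gives $\ch_{11}(A_1)\geq 0$, and therefore $\ch_{11}(A_1)=0$.

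These three vanishings give $\ch(A_1)$ the asserted shape, and the transform matrix together with $\Phi A_1 \cong \wh{A_1}[-1]$ delivers the form of $\ch(\wh{A_1})$ above, completing the argument via \cite[Lemma 5.10]{LZ2}. The one point that I expect to need genuine care is the order of the two invocations of \cite[Remark 5.17]{LZ2}: the bound $\ch_{11}\leq 0$ is only legitimate once $\ch_{00}(A_1)=0$ has been secured, so the vanishing of $\ch_{00}$ must be extracted first (from the filtration, after the $\ch_{10}$-argument has eliminated the two rank-carrying generators). The remaining bookkeeping — that the signs of $\ch_{10}$ and $\ch_{11}$ on the generating categories of $\mathcal E$ are exactly as claimed, and that $\wh{(-)}$ is compatible with the Chern-character computation on a $\Phi$-WIT$_1$ sheaf — is purely formal and I anticipate no further obstacle.
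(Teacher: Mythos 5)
Your argument is correct, but it takes a genuinely different route from the paper's. The paper's proof is functorial: it first notes that for any $A \in \Bl$ one has $\Phi^1(A) \cong \Phi^1(H^0(A)) = \wh{A_1}$, so the property ``$\wh{A_1} \in \Coh^{\leq 1}(X)$'' is preserved under extensions in $\Bl$ (via the long exact sequence of $\Phi$-cohomology, $\Phi\Bl \subset D^{[0,1]}_{\Coh(X)}$, and the fact that $\Coh^{\leq 1}(X)$ is a Serre subcategory of $\Coh(X)$), and then simply observes that the property holds for each generating category in \eqref{eq49}. You instead pin down $\ch(A_1)$ numerically: using that $A_1$ is a sheaf quotient of $H^0(A)$ lying in the torsion class $\mathcal{E}$ of Lemma \ref{lemma17} and also in $W_{1,X} \cap \Tl$, you force $\ch_{10}=\ch_{00}=\ch_{11}=0$ by additivity along an $\mathcal{E}$-filtration played against the sign constraints of \cite[Remark 5.17]{LZ2}, and finish with \cite[Lemma 5.10]{LZ2}. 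Both proofs ultimately reduce to an inspection of the same generators; the paper's extension-closure argument is shorter and avoids all Chern-character bookkeeping, while yours yields the sharper explicit output that $\ch(A_1)$ has the shape \mbox{\tiny $\begin{pmatrix} 0 & \ast & \ast \\ 0 & 0 & \ast \end{pmatrix}$}, i.e.\ that $A_1$ is a ``vertical'' sheaf in the sense appearing in Lemma \ref{lemma23} --- a statement the paper only extracts later, and only for semistable objects. One small point of phrasing: the hypothesis that legitimises $\ch_{11}(A_1) \leq 0$ via \cite[Remark 5.17]{LZ2} is $\ch_{10}(A_1)=0$ (so that $\wh{A_1}$ is a torsion sheaf whose $\ch_1$ must be effective), not $\ch_{00}(A_1)=0$ as you write; since you establish both before invoking it, this does not affect the validity of the argument.
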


\begin{proof}
Recall that $\Fl \subset W_{1,X}$ and $\Phi \Bl \subset D^{[0,1]}_{\Coh (X)}$.  Given any $E \in \Bl$, applying $\Phi$ to the $\Bl$-short exact sequence $0 \to H^{-1}(E)[1] \to E \to H^0(E) \to 0$ and taking $\Coh (X)$-cohomology gives
\[
  \Phi^1 (E) \cong \Phi^1 (H^0(E))
\]
where $\wh{\Phi^1 (H^0(E))}$ is precisely the $\Phi$-WIT$_1$ part of $H^0(E)$.  As a result, the following property for objects $A$ in $\Bl$
\begin{equation}\label{eq30}
 \text{The $\Phi$-WIT$_1$ part $A_1$ of $H^0(A)$ is such that $\wh{A_1} \in \Coh^{\leq 1}(X)$}
\end{equation}
is preserved under extension.  Consequently, the lemma would follow if it holds for all  objects $A$ that lies in each of the categories used to generate $\Ac_{1/2,0}$ in its definition \eqref{eq49}, which is clear.
\end{proof}

\subsection{Construction of the HN filtration}

Let us now set
\begin{align*}
  \Ac_{1/2} &= \Ac_{\bullet,1/2} \cap \Ac_\bullet^\circ, \\
  \Ac_0 &= \Ac_{1/2,0} \cap \Ac_{\bullet,1/2}^\circ,\\
  \Ac_{-1/2} &= \Ac_{1/2,0}^\circ.
\end{align*}
The torsion quadruple \eqref{eq35} in $\Bl$ can now be denoted as
\begin{equation}\label{eq59}
  (\Ac_\bullet,\, \Ac_{1/2}, \, \Ac_0, \, \Ac_{-1/2}).
\end{equation}

\begin{lemma}\label{lemma22}
For $i=\tfrac{1}{2}, 0, -\tfrac{1}{2}$, and any $E \in \Ac_i$, we have $\phi (E) \to i$ as $s \to \infty$.
\end{lemma}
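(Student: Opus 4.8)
The plan is to determine $\phi(E)$ from the leading term of $\oZw(E)$, viewed as a Laurent polynomial in $s$, and then to use the torsion-theoretic description of each $\Ac_i$ to fix the sign of that leading coefficient. Writing $\ch(E)=(b_{ij})$ and substituting $t=\alpha/s$ into \eqref{eq4}, I get $\Re\,\oZw(E)=hb_{10}s^2+2h\alpha b_{01}$ and $\Im\,\oZw(E)=h(2b_{11}-\alpha b_{00})s+\alpha b_{02}s^{-1}$. Since $b_{10}=\ch_{10}(E)\ge 0$ on $\Bl$ by Remark \ref{remark14}(iv), this gives a dictionary: $\phi(E)\to 0$ whenever $b_{10}>0$; and when $b_{10}=0$, the sign of $2b_{11}-\alpha b_{00}$ decides between $\phi\to\tfrac12$ (positive), $\phi\to 0$ (zero, with $b_{01}>0$) and $\phi\to-\tfrac12$ (negative), the remaining boundary cases being governed by $b_{02}$. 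I record this once. The case $i=\tfrac12$ is then immediate: $\Ac_{1/2}=\Ac_{\bullet,1/2}\cap\Ac_\bullet^\circ\subseteq\Ac_{\bullet,1/2}$, so $\phi(E)\to\tfrac12$ by Lemma \ref{lemma12}.

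For $i=-\tfrac12$ I take a nonzero $E\in\Ac_{-1/2}=\Ac_{1/2,0}^\circ$. By Remark \ref{remark8} we have $E\in W_{1,X}\cap\Tl$, so Lemma \ref{lemma7} gives $b_{10}=0$ and presents $\wh E$ as a torsion sheaf. Because $\ch_0(\wh E)=0$, the positivity $\ch_{10}(\wh E),\ch_{01}(\wh E)\ge 0$ of \cite[Lemma 5.2]{LZ2} yields, after transporting through the cohomological transform as in \eqref{eq12}, $\bo^2\ch_1(\wh E)=-2h\tfrac{\lambda^2}{\alpha}(2b_{11}-\alpha b_{00})\ge 0$, i.e.\ $2b_{11}-\alpha b_{00}\le 0$. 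If the inequality is strict, the dictionary gives $\phi(E)\to-\tfrac12$. If it is an equality, both nonnegative terms vanish, forcing $\ch_0(\wh E)=\ch_1(\wh E)=0$; then $\wh E\in W_{0,X}\cap\Coh^{\leq 1}(X)$ by \cite[Lemma 5.10]{LZ2}, so $E\in\Phi(W_{0,X}\cap\Coh^{\leq 1}(X))\subseteq\Ac_{1/2,0}$ (as observed in the proof of Proposition \ref{pro1}), and $E\in\Ac_{1/2,0}^\circ$ forces $E=0$, a contradiction. Hence $\phi(E)\to-\tfrac12$.

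The real work is $i=0$. For a nonzero $E\in\Ac_0=\Ac_{1/2,0}\cap\Ac_{\bullet,1/2}^\circ$, the observation recorded after \eqref{eq49} already gives $\phi(E)\to\tfrac12$ or $0$ (the value $\tfrac12$ is legitimate since $\oZw(E)\not\equiv 0$; otherwise $E\in\Ac_\bullet\subseteq\Ac_{\bullet,1/2}$ would force $E=0$). I exclude $\phi(E)\to\tfrac12$ by exhibiting a nonzero subobject of $E$ in $\Ac_{\bullet,1/2}$. By the dictionary $\phi(E)\to\tfrac12$ forces $b_{10}=0$; then $\ch_{10}(H^{-1}(E))=\ch_{10}(H^0(E))$ with the first $\le 0$ and the second $\ge 0$, so both vanish and $H^{-1}(E)\in\Fc^{l,0}$. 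Were $H^{-1}(E)$ nonzero, $H^{-1}(E)[1]\hookrightarrow E$ would be a subobject in $\Fc^{l,0}[1]\subseteq\Ac_{\bullet,1/2}$, contradicting $E\in\Ac_{\bullet,1/2}^\circ$; so $E=H^0(E)$ is a sheaf in $\Tl$ with $\ch_{10}=0$. Filtering $E$ by the generators of $\Ac_{1/2,0}$ listed in \eqref{eq49} and using $\ch_{10}\ge 0$ on each factor, the $\ch_{10}$-positive generators $C^2_{2,+}$, $C^3_{2,+}$ and $\Fc^{l,-}[1]$ cannot occur, so all factors lie in $\Coh(\pi)_{\leq 1}$ and $E\in\Coh(\pi)_{\leq 1}$ is a torsion sheaf. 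Its $W_{0,X}$-part lies in $W_{0,X}\cap\Coh(\pi)_{\leq 1}\subseteq\Ac_{\bullet,1/2}$ and is a subobject of $E$, hence zero; thus $E\in W_{1,X}\cap\Coh(\pi)_{\leq 1}$ and $\ch_{11}(E)\le 0$ by \cite[Remark 5.17]{LZ2}. Since $b_{00}=0$, $\phi(E)\to\tfrac12$ would require $2b_{11}-\alpha b_{00}=2\ch_{11}(E)>0$ (or the boundary value $\ch_{11}=b_{01}=0<b_{02}$, which puts $E\in\Coh^{\leq 1}(X)\subseteq\Ac_{\bullet,1/2}$ and again forces $E=0$); both are impossible, so $\phi(E)\to 0$.

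The step I expect to be the main obstacle is precisely this last case: one must show that any phase-$\tfrac12$ behaviour of an object of $\Ac_{1/2,0}$ is carried by an honest $\Ac_{\bullet,1/2}$-subobject, which is exactly what membership in $\Ac_{\bullet,1/2}^\circ$ rules out. The delicate point is the reduction ``$\ch_{10}(E)=0\Rightarrow E$ is a torsion sheaf in $\Coh(\pi)_{\leq 1}$'', which leans on the explicit generator list \eqref{eq49}; I would take care that, once $\ch_{10}(E)=0$, neither a positive-rank generator (via $H^{-1}(E)$ or $C^3_{2,+}$) nor a positive-fibre-degree generator can contribute, so that the $W_{0,X}$-part becomes available as the destabilising subobject.
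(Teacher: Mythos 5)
Your conclusions are correct and your overall architecture (reduce everything to the sign of the leading Laurent coefficient of $\oZw$, then use the torsion-theoretic membership of $E$ to pin that sign down) is the same as the paper's, but the cases are worth separating. For $i=-\tfrac12$ you take a genuinely different route: the paper filters $E=H^0(E)\in W_{1,X}\cap\Tl$ as $M_0\subseteq E$ with $E/M_0\in\Tc^{l,0}$ and $M_0$ torsion, shows $M_0$ must lie in $C^2_{1,-}$ (excluding $\ch_{11}(M_0)=0$ by the argument at the end of Lemma \ref{lemma24}), and reads $\phi\to-\tfrac12$ for both pieces off Table \ref{table1}; you instead get $2b_{11}-\alpha b_{00}\leq 0$ in one stroke from the effectivity of $\ch_1$ of the torsion sheaf $\wh E$, and dispose of the equality case by showing $E\in\Phi(W_{0,X}\cap\Coh^{\leq 1}(X))\subseteq\Ac_{1/2,0}$, whence $E=0$. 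Your version is shorter and isolates exactly the inequality that matters; the paper's version yields the finer structural fact $M_0\in C^2_{1,-}$.

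For $i=0$ there is a small but genuine gap. After reducing to $E=H^0(E)$ a sheaf with $\ch_{10}(E)=0$, you filter $E$ by the generators of $\Ac_{1/2,0}$ in \eqref{eq49} and assert that, once the $\ch_{10}$-positive generators are excluded, ``all factors lie in $\Coh(\pi)_{\leq 1}$.'' This overlooks the generator $\Fc^{l,0}[1]$, whose nonzero objects also have $\ch_{10}=0$, so the $\ch_{10}$-count does not rule them out, and they certainly do not lie in $\Coh(\pi)_{\leq 1}$. The repair is short --- each such factor contributes strictly negative $\ch_0$ while the surviving sheaf generators are all torsion, so $\ch_0(E)\geq 0$ forces those factors to vanish; alternatively one can filter $E$ inside the torsion class $\mathcal{E}=\{H^0(A):A\in\Ac_{1/2,0}\}$ of $\Coh(X)$ from the proof of Lemma \ref{lemma17} --- but as written the step is unjustified. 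Relatedly, you lean on the unproved remark following \eqref{eq49} that $\phi\to\tfrac12$ or $0$ on all of $\Ac_{1/2,0}$ in order to convert ``not $\tfrac12$'' into ``$=0$''; this is true, but it requires checking that the relevant sign constraints on the generators ($b_{10}\geq 0$ always, and $2b_{11}-\alpha b_{00}\geq 0$ with $b_{01}\geq 0$ whenever $b_{10}=0$) persist under addition of Chern characters. The paper sidesteps both issues by proving $\ch_{11}(E)=0$ exactly, via Lemma \ref{lemma20} applied to the $\Phi$-WIT$_1$ sheaf $E$, which forces $\phi(E)\to 0$ directly rather than by elimination.
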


\begin{proof}
The case  $E \in \Ac_{1/2}$: from the definition of $\Ac_{\bullet,1/2}$ in \eqref{eq23}, it is clear that  $\phi (E) \to \tfrac{1}{2}$ when $s \to \infty$ for any $E \in \Ac_{1/2}$.

The case   $E \in \Ac_0$:  if $\ch_{10}(E)>0$, then we certainly have $\phi (E) \to 0$ as $s \to \infty$, so let us suppose $\ch_{10}(E)=0$.  Then $H^{-1}(E) \in \Fc^{l,0}$.  However, we also have $E \in  \Ac_{\bullet,1/2}^\circ$, and so $H^{-1}(E)$ must be zero and  $E=H^0(E)$.  Then,  from the definition of $\Ac_{1/2,0}$, we know that each $\mu_f$-HN factor of $E$ must be either a torsion sheaf or a $\mu_f$-semistable sheaf with $\mu_f >0$.  That $\ch_{10} (E)=0$ then forces $E$ to be a torsion sheaf in $\Coh (\pi)_{\leq 1}$.  However, $E \in \Ac_{\bullet,1/2}^\circ$, and so $E$ must be a pure 2-dimensional sheaf that is  $\Phi$-WIT$_1$.  By Lemma \ref{lemma20}, we know $\wh{E} \in \Coh^{\leq 1}(X)$, implying $\ch_{11}(E)=0$.  Hence $\phi (E) \to 0$ as $s \to \infty$.

The case $E \in \Ac_{-1/2}$: we have  $H^{-1}(E)=0$ since $\Fl [1] \subset \Ac_{1/2,0}$.  Also, $H^0(E) \in W_{1,X} \cap \Tl$ by Remark \ref{remark8}, and from  Lemma \ref{lemma7} we know $H^0(E)$ has a filtration in $\Coh (X)$
\begin{equation}\label{eq37}
      M_0 \subseteq M_1 = H^0(E)=E
\end{equation}
where $M_1/M_0 \in \Tc^{l,0}$ and $M_0 \in W_{1,X} \cap \Coh^{\leq 2}(X)$ is the maximal torsion subsheaf of $H^0(E)$.

Since $\Coh^{\leq 1}(X) \subset \Ac_{1/2,0}$, we can assume that $M_0$ is pure 2-dimensional.  Since $H^0(E)$ is $\Phi$-WIT$_1$, the same holds for $M_0$.  We claim that $\ch_{11}(M_0)<0$, i.e.\ $M_0 \in \scalea{\gyoung(;;+;*,;;-;*)}$.  For, if $\ch_{11}(M_0)=0$, then the argument in the last part of the proof of Lemma \ref{lemma24} shows that $M_0$ is the extension of a fiber sheaf by a sheaf in $\langle \scalea{\gyoung(;;;+,;;;0))}, \scalea{\gyoung(;;+;*,;;0;*)}\rangle$, i.e.\ $M_0 \in \Ac_{1/2,0}$.  However, the inclusion in \eqref{eq37} implies $M_0 \in \Ac_{1/2,0}^\circ$, and $M_0$ would have to be zero.  Hence $M_0 \in \scalea{\gyoung(;;+;*,;;-;*)}$.

From Table \ref{table1}, we know $\phi  \to -\tfrac{1}{2}$ as $s \to \infty$ for both $M_0$ and $M_1/M_0$.  Hence the same holds for $E$.
\end{proof}

\begin{lemma}\label{lemma21}
 An object $E \in \Ac_\bullet^\circ$ is $\nu^l$-semistable in $\Bl$ iff, for some $i=\tfrac{1}{2}, 0, -\tfrac{1}{2}$, we have:
 \begin{itemize}
 \item $E \in \Ac_i$;
 \item for any strict monomorphism $0 \neq E' \hookrightarrow E$ in $\Ac_i$, we have $\phi (E') \preceq \phi (E)$.
 \end{itemize}
\end{lemma}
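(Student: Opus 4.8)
The plan is to prove both implications by exploiting the torsion quadruple \eqref{eq59} in $\Bl$ together with the phase computation of Lemma \ref{lemma22}, reducing every comparison to the limiting phases $\tfrac12, 0, -\tfrac12$ attached to $\Ac_{1/2}, \Ac_0, \Ac_{-1/2}$. Throughout I will use the seesaw property of $\phi$: for a $\Bl$-short exact sequence $0 \to A \to E \to B \to 0$ (with $s \gg 0$, so that all three values of $\oZw$ lie in the half-plane $-i\mathbb{H}_0$), the phase $\phi(E)$ lies between $\phi(A)$ and $\phi(B)$, so that $\phi(A) \preceq \phi(E) \iff \phi(A) \preceq \phi(B)$. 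I also record that each of $\Ac_\bullet, \Ac_{\bullet,1/2}, \Ac_{1/2,0}$ is a torsion class in $\Bl$ (Lemmas \ref{lemma16}, \ref{lemma17}), hence determines a torsion pair, and that $\Ac_{1/2}, \Ac_0, \Ac_{-1/2}$ are all contained in $\Ac_\bullet^\circ$; since $\Ac_\bullet^\circ$ is closed under $\Bl$-subobjects, every $\Bl$-subobject of an $E \in \Ac_i$ again lies in $\Ac_\bullet^\circ$.

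For the forward direction, assume $E \in \Ac_\bullet^\circ$ is $\nu^l$-semistable. Decomposing $E$ by the torsion pairs $(\Ac_{\bullet,1/2}, \Ac_{\bullet,1/2}^\circ)$ and $(\Ac_{1/2,0}, \Ac_{1/2,0}^\circ)$ produces $T \subseteq S \subseteq E$ with $T \in \Ac_{1/2}$, $S/T \in \Ac_0$ and $E/S \in \Ac_{-1/2}$ (the $\Ac_\bullet$-part vanishes since $E \in \Ac_\bullet^\circ$). By Lemma \ref{lemma22} these subquotients have $\phi \to \tfrac12, 0, -\tfrac12$. If two of $T, S/T, E/S$ were nonzero, then $\nu^l$-semistability applied to the relevant sub/quotient pair would force two incompatible limits on $\phi(E)$ (e.g.\ a nonzero $T$ forces $\phi(E) \to \tfrac12$, while a nonzero factor of strictly smaller limiting phase occurring as a quotient forces $\phi(E)$ below it); for $s \gg 0$ this is a contradiction. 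Hence exactly one factor is nonzero and $E \in \Ac_i$ for a single $i$. The second bullet is then immediate, since a strict monomorphism $E' \hookrightarrow E$ in $\Ac_i$ is in particular a $\Bl$-subobject, so $\phi(E') \preceq \phi(E)$ by semistability.

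For the converse, suppose $E \in \Ac_i$ satisfies the two conditions, and take any $\Bl$-subobject $0 \neq A \subsetneq E$; by the seesaw reduction it suffices to show $\phi(A) \preceq \phi(E)$. First I discard the parts of $A$ of limiting phase $> i$: since $E$ lies in the appropriate torsion-free class ($\Ac_{\bullet,1/2}^\circ$ when $i=0$, $\Ac_{1/2,0}^\circ$ when $i=-\tfrac12$), which is closed under subobjects, the higher-phase torsion part of $A$ must vanish. The complementary lower-phase part only decreases the phase, so by seesaw $\phi(A) \preceq \phi(A_{=i})$, where $A_{=i} \in \Ac_i$ is the limiting-phase-$i$ part of $A$, extracted as a $\Bl$-subobject by the relevant torsion pair. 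It then remains to prove $\phi(A_{=i}) \preceq \phi(E)$. For this I saturate $A_{=i}$ inside $E$: decomposing $E/A_{=i}$ by the next-finer torsion pair (namely $(\Ac_\bullet,\Ac_\bullet^\circ)$, $(\Ac_{\bullet,1/2},\Ac_{\bullet,1/2}^\circ)$, $(\Ac_{1/2,0},\Ac_{1/2,0}^\circ)$ for $i=\tfrac12,0,-\tfrac12$ respectively) gives $A_{=i} \subseteq \bar A \subseteq E$ with $\bar A \hookrightarrow E$ a strict monomorphism in $\Ac_i$ and with $\bar A / A_{=i}$ lying in a category of strictly larger limiting phase. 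The hypothesis gives $\phi(\bar A) \preceq \phi(E)$, while the seesaw applied to $0 \to A_{=i} \to \bar A \to \bar A/A_{=i} \to 0$ gives $\phi(A_{=i}) \preceq \phi(\bar A)$ (the added quotient has strictly larger limiting phase, or phase identically $\tfrac12$ when $i=\tfrac12$). Chaining yields $\phi(A) \preceq \phi(A_{=i}) \preceq \phi(\bar A) \preceq \phi(E)$, as desired.

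The main obstacle is exactly the comparison of phases when $A$ carries content of the same limiting phase $i$ as $E$: there the leading terms of the Laurent polynomials $\oZw$ agree and the naive limiting-phase comparison is inconclusive. This is precisely what the second hypothesis (semistability tested against strict monomorphisms in $\Ac_i$) is designed to control, and the role of the saturation step is to enlarge $A_{=i}$ to a genuine strict subobject $\bar A$ of $E$ in $\Ac_i$ without lowering its phase, which works because the complementary torsion pair contributes only content of strictly higher limiting phase. The short remaining verification is that in each of the three cases $\bar A \in \Ac_i$ and $E/\bar A \in \Ac_i$, so that $\bar A \hookrightarrow E$ is genuinely strict in $\Ac_i$; this uses that each $\Ac_i$ is the intersection of a torsion class with a torsion-free class, hence closed under extensions.
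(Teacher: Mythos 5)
Your proof is correct and follows essentially the same strategy as the paper's: the forward direction uses the torsion quadruple \eqref{eq59} and the limiting phases of Lemma \ref{lemma22}, and the converse reduces an arbitrary $\Bl$-subobject to a strict monomorphism in $\Ac_i$ by first trimming off the lower-phase part via the $(\Ac_{1/2,0},\Ac_{1/2,0}^\circ)$-type decomposition and then saturating with the higher-phase part of the quotient, which is exactly the paper's construction of $M$ (resp.\ $K$) via the octahedral axiom in Cases 2 and 3. Your presentation merely treats the three cases uniformly where the paper writes them out separately.
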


\begin{proof}
From the torsion quadruple \eqref{eq59}, we know $\Ac_\bullet^\circ$ is the extension closure of $\Ac_{1/2}, \Ac_0$ and $\Ac_{-1/2}$; by  Lemma \ref{lemma22}, any $\nu^l$-semistable object in $\Ac_\bullet^\circ$ must lie in $\Ac_{1/2}, \Ac_0$ or $\Ac_{-1/2}$. For the rest of the proof, we divide into three cases:

Case 1:  $E$ is an object of $\Ac_{1/2}$.  The proof of this case is similar to that of Case 2 below, and so we omit it here.

Case 2:  $E$ is an object of $\Ac_0$.  Take any short exact sequence
\begin{equation}\label{eq36}
0 \to F \to E \to G \to 0.
\end{equation}
in $\Bl$ where $F, G \neq 0$.  First, we consider the $\left( \Ac_{1/2,0}, \Ac_{1/2,0}^\circ\right)$-decomposition of $F$ in $\Bl$:
\[
 0 \to F_1 \to F \to F_2 \to 0.
\]
Then $\phi (F_1) \succeq \phi (F) \succeq \phi (F_2)$ by Lemma \ref{lemma22}.  Since $F_1$ is a subobject of $E$ in $\Bl$, we also have $F_1 \in \Ac_{1/2}^\circ$.  Now, consider the short exact sequence from the injection $F_1 \hookrightarrow E$ in $\Bl$:
\[
 0 \to F_1 \to E \to N \to 0.
\]
We have $F_1 \in \Ac_{1/2,0} \cap \Ac_{1/2}^\circ$ and $N \in \Ac_{1/2,0}$.  Consider also the $(\Ac_{1/2}, \Ac_{1/2}^\circ)$-decomposition of $N$ in $\Bl$
\[
0 \to N_0 \to N \to N_1 \to 0,
\]
Applying the octahedral axiom to the composition of surjections in $\Bl$
\[
  E \twoheadrightarrow N \twoheadrightarrow N_1,
\]
we obtain the diagram
\[
\scalebox{0.8}{
\xymatrix{
  & & & F_1[1] \ar[ddd] \\
  & & & \\
  & N \ar[dr] \ar[uurr] & & \\
  E \ar[ur] \ar[rr] & & N_1 \ar[r] \ar[dr] & M [1] \ar[d] \\
  & & & N_0 [1]
}
}
\]
where $M$ is the kernel of $E \twoheadrightarrow N_1$ in $\Bl$, and all straight lines are exact triangles.  Since we have
\begin{itemize}
\item $F_1 \in  \Ac_{1/2,0} \cap \Ac_{1/2}^\circ$ and
\item $N_0 \in \Ac_{1/2} \subset \Ac_{1/2,0}$,
\end{itemize}
we see that $M \in \Ac_{1/2,0}$.  However, $M$ is a subobject of $E$ in $\Bl$, so $M \in \Ac_{1/2}^\circ$ also.  That is, $M \in \Ac_0$.  In addition, $N_1$ also lies in $\Ac_0$.  Hence the injection $M \hookrightarrow E$ in $\Bl$ is a strict monomorphism in $\Ac_0$.

Now, if $\phi (M) \preceq \phi (E)$ holds, then it follows that
\[
  \phi (M) \preceq \phi (E) \preceq \phi (N_1).
\]
On the other hand, since $N_0 \in \Ac_{1/2}$ and $N_1 \in \Ac_0$, we have $\phi (N_0) \succeq \phi (N) \succeq \phi (N_1)$.  Hence $\phi (E) \preceq \phi (N)$.  From the short exact sequence $0 \to F_1 \to E \to N\to 0$, we have $\phi (F_1) \preceq \phi (E)$.  It follows that $\phi (F) \preceq \phi (F_1) \preceq  \phi (E)$.  Since $\oZw (F) \neq 0$ (This is because $F$, being a $\Bl$-subobject of $E \in \Ac_0$, lies in $\Ac_{1/2}^\circ$.), from the short exact sequence \eqref{eq36}  we have $\phi (F) \preceq \phi (E) \preceq \phi (G)$.

That is, to check that $E$ is $\nu^l$-semistable, it suffices to check strict monomorphisms $M \hookrightarrow E$ in $\Ac_0$.

Case 3: $E$ is an object of $\Ac_{-1/2}$.  Take any short exact sequence in $\Bl$ as in \eqref{eq36}.  Consider the $(\Ac_{1/2,0}, \Ac_{1/2,0}^\circ)$-decomposition of $G$
\[
0 \to G_1 \to G \to G_2 \to 0.
\]
By Lemma \ref{lemma22}, we have $\phi (G_1) \succeq \phi (G) \succeq \phi (G_2)$.

Let $K$ be the kernel of the composition $E \twoheadrightarrow G \twoheadrightarrow G_2$, so that we have the $\Bl$-short exact sequence
\[
0 \to K \to E \to G_2 \to 0
\]
Note that $K$ and $G_2$ both lie in $\Ac_{-1/2}$, and so the injection $K \hookrightarrow E$ is a strict injection in $\Ac_{-1/2}$.

If $\phi (K) \preceq \phi (E)$ holds, we would have $\phi (K) \preceq \phi (E) \preceq\phi (G_2) \preceq \phi (G)$, and so $\phi (E) \preceq \phi (G)$.  If $\oZw (G) = 0$, then $\phi (G) = \tfrac{1}{2}$ and we have $\phi (F) \preceq \phi (G)$.  If, on the other hand, $\oZw (G) \neq 0$, then since we also have $\oZw (F) \neq 0$ (which follows from  $F \in \Ac_{-1/2}$), the inequality  $\phi (E) \preceq \phi (G)$ implies $\phi (F) \preceq \phi (G)$.

Overall, in order to check that $E$ is $\nu^l$-semistable, it suffices to check strict monomorphisms in $\Ac_{-1/2}$.
\end{proof}

\begin{theorem}\label{theorem3}
The Harder-Narasimhan property holds for $(\oZw, \Bl)$.  That is, every object $E \in \Bl$ admits a filtration in $\Bl$
\[
  E_0 \subseteq E_1 \subseteq \cdots \subseteq E_n = E
\]
where each subfactor $E_{i+1}/E_i$ is $\nu^l$-semistable, and $\phi (E_i/E_{i-1}) \succ \phi (E_{i+1}/E_i)$ for each $i$.
\end{theorem}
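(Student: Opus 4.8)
The plan is to follow Toda's strategy in \cite[Theorem 2.29(i)]{Toda1}: use the torsion quadruple \eqref{eq59} as a coarse skeleton separating the three limiting phases $\tfrac12, 0, -\tfrac12$ (together with the $\oZw$-trivial objects $\Ac_\bullet$), and then refine each piece into a genuine HN filtration using the finiteness of Proposition \ref{pro1} and the phase comparison of Lemma \ref{lemma22}. Concretely, given $E \in \Bl$, the torsion quadruple \eqref{eq59} produces a filtration $E_0 \subseteq E_1 \subseteq E_2 \subseteq E_3 = E$ with $E_0 \in \Ac_\bullet$, $E_1/E_0 \in \Ac_{1/2}$, $E_2/E_1 \in \Ac_0$ and $E_3/E_2 \in \Ac_{-1/2}$. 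By Lemma \ref{lemma22} the three nontrivial factors have $\phi \to \tfrac12, 0, -\tfrac12$ respectively, so for $s \gg 0$ the phases of objects drawn from consecutive blocks are automatically strictly ordered; hence it suffices to refine each factor to an HN filtration inside its own category $\Ac_i$ and then concatenate.

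The heart of the argument is to construct, for a fixed $i \in \{\tfrac12, 0, -\tfrac12\}$ and an object $G \in \Ac_i$, a filtration of $G$ in $\Ac_i$ with $\nu^l$-semistable factors of strictly decreasing phase. Since the phase is a Laurent polynomial in $s$ and the order $\preceq$ means ``$\leq$ for $s \gg 0$'', it is a total preorder, and one runs the usual HN existence argument (as in \cite{BayerPBSC}): first extract a subobject of maximal phase that is maximal among those — its $\nu^l$-semistability is then automatic — and then pass to the quotient and iterate. The two inputs this requires are exactly those supplied by Proposition \ref{pro1}: part (a) (no infinite chain of strict monomorphisms with strictly increasing phase) guarantees that a maximal-phase subobject exists, while part (b) (no infinite chain of strict epimorphisms) guarantees that the iteration terminates after finitely many steps, with strictly decreasing phases along the way. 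Lemma \ref{lemma21} then upgrades semistability in $\Ac_i$ to genuine $\nu^l$-semistability in $\Bl$, so the factors produced are $\nu^l$-semistable objects of $\Bl$.

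It remains to assemble these refinements into a single filtration of $E$ and to handle the block $\Ac_\bullet$. Pulling back each $\Ac_i$-HN filtration along the quotient maps of the torsion quadruple yields a filtration of $E$ whose factors are $\nu^l$-semistable, with phases strictly decreasing within each block and, by the strict separation of limiting phases noted above, across the blocks. The only collision to watch for is at the top: the $0$-dimensional sheaves in $\Ac_\bullet$ have $\phi \equiv \tfrac12$, and the maximal factor of the $\Ac_{1/2}$-block may also have phase identically $\tfrac12$ (for instance when built from pure $1$-dimensional sheaves). Since $\Ac_\bullet$ is a Serre subcategory of $\Bl$ by Lemma \ref{lemma30}, consisting of $\nu^l$-semistable objects of phase $\tfrac12$, and an extension of $\nu^l$-semistable objects of a common phase is again $\nu^l$-semistable of that phase, we absorb $E_0$ into the top factor when the latter has phase $\equiv \tfrac12$ and otherwise place $E_0$ strictly above it; either way the required strict decrease $\phi(E_i/E_{i-1}) \succ \phi(E_{i+1}/E_i)$ is preserved. \textbf{The main obstacle} is not the assembly but the bookkeeping forced by the polynomial phase: the HN machine must be run with respect to the ``for $s \gg 0$'' order $\preceq$ rather than a single real slope, and one must check that the finiteness in Proposition \ref{pro1} is precisely calibrated so that the existence-of-maximal-destabilizer and termination steps go through in each $\Ac_i$ — with the cross-block comparisons and the phase-$\tfrac12$ collision being the remaining points requiring care.
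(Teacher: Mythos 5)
Your proposal is correct and follows essentially the same route as the paper: the torsion quadruple \eqref{eq59} as the coarse skeleton, refinement inside each $\Ac_i$ via the finiteness statements of Proposition \ref{pro1} together with Lemma \ref{lemma21} (and Lemma \ref{lemma22} for the cross-block phase separation), concatenation, and finally the phase-$\tfrac{1}{2}$ adjustment for $\Ac_\bullet$ at the top. The paper handles that last collision by invoking the fact that $\{E \in \Bl : \Re\oZw(E)=0\}$ is a Serre subcategory (Remark \ref{remark17}), which is the same mechanism as your absorption step.
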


\begin{proof}
Take any object $E \in \Bl$.  From the torsion quadruple \eqref{eq59}, we have a filtration of $E$ in $\Bl$
\[
  E_\bullet \subseteq E_1 \subseteq E_2 \subseteq E_3 = E
\]
where the subfactors $E_\bullet, E_1/E_\bullet, E_2/E_1, E_3/E_2$ lie in $\Ac_\bullet, \Ac_{1/2}, \Ac_0, \Ac_{-1/2}$, respectively.  Noting that $\Ac_{1/2}, \Ac_0, \Ac_{-1/2}$ are all closed under extension, and that every nonzero object in each of these three categories has a nonzero value under $\oZw$, the same argument as in \cite[Theorem 2.29]{Toda1} applies.  (More specifically, for $i=\tfrac{1}{2},0,-\tfrac{1}{2}$, the proof of \cite[Proposition 2.4]{StabTC} applies to $\Ac_i$,    if we work exclusively with strict monomorphisms and strict epimorphisms in $\Ac_i$ and use Lemma \ref{lemma21} and Proposition \ref{pro1}.)  This gives us the HN filtration for each of $E_1/E_\bullet, E_2/E_1, E_3/E_2$, and concatenating these filtrations gives us a filtration for $E/E_\bullet$
\[
  E_0' \subseteq E_1' \subseteq E_2' \subseteq \cdots \subseteq E_m' = E/E_\bullet
\]
where $\phi (E_0') \succ \phi (E_1'/E_0') \succ \cdots \succ \phi (E_m'/E_{m-1}')$ and each $E_{i}'/E_{i-1}'$ (for $i \geq 1$) is $\nu^l$-semistable.

Lastly, setting $E_i$ to be the preimage of $E_i'$ in $E$ for each $i$ and noting that $\phi (E_i/E_{i-1})=\phi (E_i'/E_{i-1}')$ for all $i$ (since $\oZw (E_\bullet)=0$), we obtain the following filtration of $E$
\begin{equation}\label{eq:theorem3-eq1}
  E_0 \subseteq E_1 \subseteq E_2 \subseteq \cdots \subseteq E_m = E
\end{equation}
where $\phi (E_0) \succ \phi (E_1/E_0) \succ \cdots \succ \phi (E_m/E_{m-1})$ and each $E_i/E_{i-1} \cong E_i'/E_{i-1}'$ (for $i \geq 1$) is $\nu^l$-semistable.
\begin{itemize}
\item If $\phi (E_0') = \phi (E_0/E_\bullet) = \tfrac{1}{2}$, then $\phi (E_0)=\tfrac{1}{2}$ and $E_0$ is $\nu^l$-semistable by Remark \ref{remark17} below.  In this case, \eqref{eq:theorem3-eq1} is the HN filtration of $E$ with respect to $\nu^l$-semistability.
\item If $\tfrac{1}{2} \succ \phi (E_0')$, then we can consider the filtration of $E$ in $\Bl$
\begin{equation}\label{eq:theorem3-eq2}
 E_\bullet \subseteq E_0 \subseteq E_1 \subseteq E_2 \subseteq \cdots \subseteq E_m = E.
\end{equation}
We know $E_\bullet$ is $\nu^l$-semistable by Remark \ref{remark17}.  In this case, \eqref{eq:theorem3-eq2} is the HN filtration of $E$ with respect to $\nu^l$-semistability.
\end{itemize}
\end{proof}

\begin{remark}\label{remark17}
By Lemma \ref{lemma4}, the category
\[
  \{ E \in \Bl : \Re \oZw (E) = 0 \} = \{ E \in\Bl : \phi (E) = \tfrac{1}{2}\}
\]
is a Serre subcategory of $\Bl$.  As a result, every object in this category is $\nu^l$-semistable.
\end{remark}

\section{Tilt stability and limit tilt stability}\label{section-tiltvslimittilt}

The following lemma shows that, if $E \in D^b(X)$ is an object that is tilt (semi)stable for $s$ large enough along the hyperbola $ts = \alpha$, then $E$ is limit tilt stable.

\begin{lemma}\label{lemma-tiltimiplieslimtilt}
Fix any $\alpha >0$.  Suppose $E \in D^b(X)$ satisfies the following: there exists $s_0>0$ such that, for any $s>s_0, ts=\alpha$ and $\omega = tH+sD$, the object $E$ lies in $\Bw$ and is $\nu_\omega$-(semi)stable.  Then $E \in \Bl$ and $E$ is $\nu^l$-(semi)stable.
\end{lemma}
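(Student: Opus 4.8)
The plan is to reduce the statement to the dictionary between $\Bl$ and $\Bw$ recorded in Remark \ref{remark14}(vi), so that $\nu^l$-(semi)stability is read off directly from $\nu_\omega$-(semi)stability for $s \gg 0$. The membership $E \in \Bl$ is immediate: the hypothesis asserts exactly that $E \in \Bw$ for all $s > s_0$ with $ts = \alpha$, and by Remark \ref{remark14}(vi) this is the defining characterization of objects of $\Bl$.

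The substance of the proof is to transport a potential $\Bl$-destabilizer into the heart $\Bw$. First I would take an arbitrary short exact sequence $0 \to A \to E \to B \to 0$ in $\Bl$ with $A, B \neq 0$. Since $A$ and $B$ are themselves objects of $\Bl$, Remark \ref{remark14}(vi) provides thresholds $s_A, s_B > 0$ with $A \in \Bw$ for $s > s_A$ and $B \in \Bw$ for $s > s_B$. Choosing any $s > \max\{s_0, s_A, s_B\}$ on the hyperbola, all three of $A, E, B$ then lie in $\Bw$. I would next invoke the standard fact that an exact triangle $A \to E \to B \to A[1]$ all three of whose vertices lie in a single heart is automatically a short exact sequence in that heart: the long exact $\Bw$-cohomology sequence collapses because the degree $\pm 1$ terms vanish. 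Thus the given $\Bl$-sequence is also a $\Bw$-short exact sequence for every $s \gg 0$.

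Finally I would compare phases. Both the $\Bw$-phase and the $\Bl$-phase are computed from the same reduced central charge $\oZw$, each normalized so that objects of the respective heart have phase in $(-\tfrac12, \tfrac12]$ (using $\oZw(\Bw) \subset -i\mathbb{H}_0$ and, via Lemma \ref{lemma4}, $\oZw(\Bl) \subset -i\mathbb{H}_0$ for $s \gg 0$). Hence for $s \gg 0$, where $A, B \in \Bw$, the $\Bl$-phases $\phi(A), \phi(B)$ agree as functions of $s$ with the corresponding $\Bw$-phases, with no spurious integer shift; the degenerate case $\oZw(A)=0$ is harmless since both conventions then assign phase $\tfrac12$. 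Applying the hypothesized $\nu_\omega$-(semi)stability of $E$ to the $\Bw$-short exact sequence above yields $\phi(A) \leq \phi(B)$ (resp.\ $<$) for all such $s$, which is precisely $\phi(A) \preceq \phi(B)$ (resp.\ $\prec$). Therefore $E$ is $\nu^l$-(semi)stable.

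I expect the only genuinely substantive step to be the passage from a $\Bl$-short exact sequence to a $\Bw$-short exact sequence, i.e.\ the simultaneous-threshold bookkeeping together with the triangle-in-a-heart fact; the rest is a direct unwinding of the definitions of the two phase functions. The one point I would be careful to state explicitly is that the $\Bl$-phase and the $\Bw$-phase are literally the same function of $s$ on the overlapping range, which is what makes the implication go through without reindexing the phases.
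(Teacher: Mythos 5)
Your proposal is correct and follows essentially the same route as the paper's proof: use Remark \ref{remark14}(vi) to place $E$, $A$, $B$ in $\Bw$ for $s$ large, observe that the $\Bl$-short exact sequence is then a $\Bw$-short exact sequence, and transfer the $\nu_\omega$-(semi)stability inequality to the limit phase inequality. The extra care you take with the triangle-in-a-heart fact and the agreement of the two phase conventions is implicit in the paper's argument but does not change the proof.
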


\begin{proof}
Suppose $E$ is as given, and suppose $\omega = tH+sD$ where $ts=\alpha$ with $t,s>0$.  By Remark \ref{remark14}(vi), we have $E \in \Bl$.  Now, take any $\Bl$-short exact sequence
\begin{equation}\label{eq62}
  0 \to M \to E \to N \to 0.
\end{equation}
Again by Remark \ref{remark14}(vi), there exists some $s_1>0$ such that, for any $s>s_1$, we have $M, E, N \in \Bw$, in which case \eqref{eq62} is a $\Bw$-short exact sequence.  Let us replace $s_0$ by $\max{\{ s_0, s_1\}}$.

Suppose $E$ is $\nu_\omega$-(semi)stable for all $s>s_0$.  Then we have $\nu_\omega (M) (\leq)< \nu_\omega (N)$ for all $s>s_0$, which implies $\phi (M) (\preceq) \prec \phi (N)$, i.e.\  $E$ is $\nu^l$-(semi)stable as wanted.
\end{proof}

\begin{remark}\label{remark18}
The structure sheaf $\OO_X$ of $X$ satisfies the hypotheses in Lemma \ref{lemma-tiltimiplieslimtilt}.  Since $\ch_1(\OO_X)=0$ and $\ch_2(\OO_X)=0$, we have $\overline{\Delta}_\omega (\OO_X)=0$ for any ample class $\omega$, where
\[
    \overline{\Delta}_\omega (E) = (\omega^2 \ch_1 (E))^2 - 2\omega^3 \ch_0(E)\cdot \omega \ch_2 (E)
\]
is the discriminant in the sense of \cite[Proposition 7.3.2]{BMT1}.  Then $\OO_X$ is $\nu_\omega$-stable for any ample class $\omega$ by \cite[Proposition 7.4.1]{BMT1}, and hence is $\nu^l$-stable.
\end{remark}

\appendix

\section{Generating torsion classes in $\Coh (X)$}

In this section, we collect the proofs of some results used in Section \ref{sec-generation}.

\begin{lemma}\label{lemma24}
We have
\[
\Coh (\pi)_{\leq 1} = \left\langle \Coh^{\leq 1}(X), \vcenter{\vbox{
\xymatrix @-2.3pc{
 \scalea{\gyoung(;;+;*,;;+;*)} \\
 \scalea{\gyoung(;;+;*,;;0;*)} \\
 \scalea{\gyoung(;;+;*,;;-;*)}
}
}} \right\rangle.
\]
\end{lemma}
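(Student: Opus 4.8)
The plan is to prove the two inclusions separately.

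For the inclusion $\supseteq$, I would use that $\Coh(\pi)_{\leq 1}$ is itself a torsion class in $\Coh(X)$, hence closed under quotients and extensions; it therefore suffices to check that each of the four generating subcategories sits inside $\Coh(\pi)_{\leq 1}$. For $\Coh^{\leq 1}(X)$ this is immediate, since $\dimension \supp E \leq 1$ forces $\dimension \pi(\supp E) \leq 1$, and for $C^2_{1,+} = \Coh^2(\pi)_1 \cap W_{0,X}$ and $C^2_{1,-} = \Coh^2(\pi)_1 \cap W_{1,X} \cap \{\ch_{11}<0\}$ it follows from $\Coh^2(\pi)_1 \subseteq \Coh(\pi)_{\leq 1}$. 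The only nontrivial case is $C^2_{1,0}$: here I would invoke the fact that $\Phi$ is a \emph{relative} Fourier--Mukai transform over $S$, so it preserves $\dimension \pi(\supp(-))$; since $\{\Coh^{\leq 0}\}^\uparrow \cap \Coh^{\leq 1}(X) \subseteq \Coh(\pi)_{\leq 1}$ and the condition $\ch_{01}\neq 0$ selects exactly the transforms that are honest sheaves, these transforms land back in $\Coh(\pi)_{\leq 1}$.

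For the inclusion $\subseteq$, take $E \in \Coh(\pi)_{\leq 1}$ and first split off its maximal subsheaf $T$ in $\Coh^{\leq 1}(X)$, giving $0 \to T \to E \to G \to 0$ with $T$ a generator and $G \in \Coh^2(\pi)_1$ pure (or zero). I would then apply the torsion pair $(W_{0,X},W_{1,X})$ to $G$, obtaining $0 \to G_0 \to G \to G_1 \to 0$. Since a nonzero subsheaf of a pure sheaf is again pure of the same dimension, $G_0$ lies in $\Coh^2(\pi)_1 \cap W_{0,X} = C^2_{1,+}$ (with $\ch_{11}(G_0)>0$ automatic by Remark~\ref{remark10}), leaving the quotient $G_1 \in W_{1,X} \cap \Coh(\pi)_{\leq 1}$, for which $\ch_{11}(G_1) \leq 0$ by \cite[Remark 5.17]{LZ2}.

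To finish I would decompose $G_1$ by the sign of $\ch_{11}$. After extracting its maximal $\Coh^{\leq 1}(X)$-subsheaf (another generator), the pure part has $\ch$ of the shape $\left(\begin{smallmatrix} 0 & + & \ast \\ 0 & a_{11} & \ast \end{smallmatrix}\right)$ with $a_{11} \leq 0$; the stratum $a_{11}<0$ is precisely $C^2_{1,-}$, while for the stratum $a_{11}=0$ the matrix description of the cohomological transform in Section~\ref{section-matrixchern} gives $\ch_0(\Phi G_1) = \ch_1(\Phi G_1) = 0$, so the transform lies in $\Coh^{\leq 1}(X)$ by \cite[Lemma 5.10]{LZ2}, whence $G_1 \in \Phi(\{\Coh^{\leq 0}\}^\uparrow \cap \Coh^{\leq 1}(X))$ with $\ch_{01}\neq 0$, i.e.\ $G_1 \in C^2_{1,0}$; this recovers the equivalence $C^1_1 \xrightarrow{\sim} C^2_{1,0}$ recorded earlier. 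The main obstacle I anticipate is the bookkeeping of non-purity introduced at each splitting: the quotient $G_1$ and its further quotients need not be $\Phi$-WIT$_1$ on the nose, so one must repeatedly peel off low-dimensional sheaves, confirm they stay within $\Coh^{\leq 1}(X)$, and verify (via the $\{\Coh^{\leq 0}\}^\uparrow$ condition) that the $\ch_{11}=0$ stratum is genuinely captured by $C^2_{1,0}$ rather than leaking extra $W_{0,X}$-torsion — this Fourier--Mukai identification is the technical heart of the argument.
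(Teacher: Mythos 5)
Your overall route is the same as the paper's: reduce to the pure $2$-dimensional case by peeling off the maximal $\Coh^{\leq 1}(X)$-subsheaf, split off the $\Phi$-WIT$_0$ part into $C^2_{1,+}$ via Remark \ref{remark10}, and then treat the remaining $\Phi$-WIT$_1$ piece according to the sign of $\ch_{11}$, using the cohomological transform and \cite[Lemma 5.10]{LZ2} in the $\ch_{11}=0$ case. The $\supseteq$ inclusion, which the paper treats as immediate, is handled correctly.

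The gap is in your last step. From $\ch_{11}(G_1)=0$ you correctly get that $\wh{G_1}$ is a $\Phi$-WIT$_0$ sheaf in $\Coh^{\leq 1}(X)$, but you then conclude $G_1 \in \Phi(\{\Coh^{\leq 0}\}^\uparrow \cap \Coh^{\leq 1}(X))$, i.e.\ $G_1 \in C^2_{1,0}$. This does not follow: a $1$-dimensional sheaf need not lie in $\{\Coh^{\leq 0}\}^\uparrow$ --- it can have fiber-sheaf components and a $0$-dimensional part --- so $G_1$ itself is generally \emph{not} an object of $C^2_{1,0}$, only an extension involving one. You flag this as ``the technical heart'' but then assert the conclusion rather than proving it. The paper closes exactly this gap by filtering the transform: it takes the maximal subsheaf $A' \subseteq \wh{E}$ lying in the torsion class $\{\Coh^{\leq 0}\}^\uparrow$, whose quotient $A''$ is then a $\Phi$-WIT$_0$ fiber sheaf by \cite[Lemma 3.15]{Lo11}, and further extracts the maximal $0$-dimensional subsheaf $A'_0 \subseteq A'$; the maximality of $A'_0$ together with \cite[Lemma 5.11]{LZ2} forces $\ch_{11}(A'/A'_0)>0$, so that applying $\Phi$ to the filtration $A'_0 \subseteq A' \subseteq \wh{E}$ exhibits $E$ as an iterated extension of objects of $C^1_{0,0}$, $C^2_{1,0}$ and $\Coh(\pi)_0$, the first and last of which sit inside the generator $\Coh^{\leq 1}(X)$. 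You need to supply this two-step filtration of the transform (or an equivalent device) for the $\ch_{11}=0$ stratum; without it the proof is incomplete, and the statement ``$G_1 \in C^2_{1,0}$'' as written is false.
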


\begin{proof}
Take any $E \in \Coh (\pi)_{\leq 1}$.  To show that $E$ lies in the above extension closure, it suffices to assume $E$ is pure 2-dimensional.  The $\Phi$-WIT$_0$ part of $E$, if nonzero, must lie in $\scalea{\gyoung(;;+;*,;;+;*)}$ by Remark \ref{remark10}.  Thus we can further assume $E$ is $\Phi$-WIT$_1$ and pure 2-dimensional.  If $\ch_{11}(E)<0$, then $E$ lies in $\scalea{\gyoung(;;+;*,;;-;*)}$.  So let us assume that $\ch_{11}(E)=0$.  Then $\wh{E}$ is a $\Phi$-WIT$_0$ 1-dimensional sheaf  \cite[Lemma 5.10]{LZ2}.  We can now filter  $\wh{E}$ as follows: let $A'$ be the maximal subsheaf of $\wh{E}$ in the torsion class $\{\Coh^{\leq 0}\}^\uparrow$ in $\Coh (X)$; this gives  a short exact sequence of sheaves
\[
0 \to A' \to \wh{E} \to A'' \to 0
\]
where $A''$ is a $\Phi$-WIT$_0$ fiber sheaf \cite[Lemma 3.15]{Lo11} and $A' \in \{\Coh^{\leq 0}\}^\uparrow \cap \Coh^{\leq 1}(X)$.  Let $A'_0$ be the maximal 0-dimensional subsheaf of $A'$.  Then we have the filtration in $\Coh (X)$
\[
 A'_0 \subseteq A' \subseteq \wh{E}
\]
where all subfactors are $\Phi$-WIT$_0$ sheaves.  Note that if $A'/A'_0 \neq 0$ then $\ch_{11}(A'/A'_0) >0$: if $A'/A'_0 \neq 0$ and  $\ch_{11}(A'/A'_0)=0$ then $A'/A'_0$ would be a fiber sheaf \cite[Lemma 5.11]{LZ2}, and so must be a 0-dimensional sheaf, contradicting the maximality of $A'_0$.  Thus if $A'/A'_0 \neq 0$, we have $\ch_{01}(\Phi (A'/A'_0)) \neq 0$.  Now we have
\begin{itemize}
\item $\Phi (A'_0) \in \scalea{\gyoung(;;;+,;;;0)}$,
\item $\Phi (A'/A'_0) \in \scalea{\gyoung(;;+;*,;;0;*)}$,
\item $\Phi (\wh{E}/A') \in \Coh (\pi)_0$,
\end{itemize}
and so $E$ lies in the described extension closure, proving  the lemma.
\end{proof}

\begin{lemma}\label{lemma25}
We have
\[
  \Coh^{\leq 2}(X) = \left\langle \Coh (\pi)_{\leq 1}(X), \scalea{
  \gyoung(;;*;*,;+;*;*)} \right\rangle.
\]
\end{lemma}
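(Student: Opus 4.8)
The plan is to realise every torsion sheaf $E \in \Coh^{\leq 2}(X)$ as an iterated extension whose successive quotients lie alternately in $\Coh (\pi)_{\leq 1}$ and in $C^2_{2,+}=\scalea{\gyoung(;;*;*,;+;*;*)}$. The reverse inclusion $\langle \Coh (\pi)_{\leq 1}, C^2_{2,+}\rangle \subseteq \Coh^{\leq 2}(X)$ is immediate: both generating categories consist of sheaves of dimension at most $2$ (a sheaf with $\dimension \pi(\supp) \leq 1$ has $\dimension \supp \leq 2$ since the fibres of $\pi$ are curves), and $\Coh^{\leq 2}(X)$ is extension-closed. So the content is the forward inclusion.

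First I would peel off the vertical part. Since $\Coh (\pi)_{\leq 1}$ is a torsion class in the noetherian category $\Coh (X)$, decompose $E$ as $0 \to E' \to E \to E'' \to 0$ with $E' \in \Coh (\pi)_{\leq 1}$ and $E'' \in \Coh (\pi)_{\leq 1}^\circ$. If $E'' = 0$ we are done. Otherwise I claim $E''$ is pure of dimension $2$ with $\ch_{10}(E'') > 0$, so that $E'' \in \Coh^2(\pi)_2$. Indeed, any nonzero subsheaf of $E''$ of dimension $\leq 1$ would lie in $\Coh (\pi)_{\leq 1}$ (as $\dimension \pi(\supp) \leq \dimension \supp \leq 1$), contradicting $E'' \in \Coh (\pi)_{\leq 1}^\circ$; hence $E''$ is pure $2$-dimensional. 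Writing $\ch_1(E'') = a_{10}H + a_{01}D$, the coefficient $a_{10} = \ch_{10}(E'')$ is $\geq 0$, and $a_{10}=0$ would make $\ch_1(E'')$ a vertical divisor class, forcing the pure sheaf $E''$ back into $\Coh (\pi)_{\leq 1}$; thus $\ch_{10}(E'') > 0$.

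Next I would split $E''$ along the WIT torsion pair: $0 \to E''_0 \to E'' \to E''_1 \to 0$ with $E''_0 \in W_{0,X}$ and $E''_1 \in W_{1,X}$. As a subsheaf of $E''$, the object $E''_0$ is again pure $2$-dimensional, and since $\Coh (\pi)_{\leq 1}^\circ$ is closed under subobjects it stays in $\Coh (\pi)_{\leq 1}^\circ$; being $\Phi$-WIT$_0$ with $\ch_{10} > 0$ it therefore lands in $C^2_{2,+}$. The crux is to show the quotient $E''_1$ is vertical. Here I would use $\ch_{10}(E''_1) \leq 0$, valid for every $\Phi$-WIT$_1$ sheaf by \cite[Remark 5.17]{LZ2}, and run the vertical decomposition once more: write $0 \to V \to E''_1 \to W \to 0$ with $V \in \Coh (\pi)_{\leq 1}$ and $W \in \Coh (\pi)_{\leq 1}^\circ$. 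If $W \neq 0$ then, by the previous paragraph, $W$ is pure horizontal with $\ch_{10}(W) > 0$; but $\ch_{10}$ is additive and vanishes on $\Coh (\pi)_{\leq 1}$, so $\ch_{10}(W) = \ch_{10}(E''_1) \leq 0$, a contradiction. Hence $W = 0$ and $E''_1 = V \in \Coh (\pi)_{\leq 1}$.

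Assembling the three pieces, $E$ admits a filtration with successive quotients $E' \in \Coh (\pi)_{\leq 1}$, $E''_0 \in C^2_{2,+}$, and $E''_1 \in \Coh (\pi)_{\leq 1}$, so $E \in \langle \Coh (\pi)_{\leq 1}, C^2_{2,+}\rangle$, completing the proof. The main obstacle is the sign argument of the third paragraph: everything hinges on the incompatibility between $\ch_{10} \leq 0$ for $\Phi$-WIT$_1$ sheaves and $\ch_{10} > 0$ for horizontal pure $2$-dimensional sheaves — exactly the mechanism already exploited in Remark \ref{remark10}. The supporting computations (that $\ch_{10}$ is additive, vanishes on $\Coh (\pi)_{\leq 1}$, and is strictly positive on nonzero horizontal pure $2$-dimensional sheaves) are routine but should be stated explicitly.
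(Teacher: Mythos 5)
Your proof is correct, and it reaches the conclusion by a slightly different route than the paper. The paper's proof is shorter: it applies the $(W_{0,X},W_{1,X})$-decomposition to $E$ directly, quotes \cite[Lemma 2.6]{Lo7} to place the $\Phi$-WIT$_1$ part in $\Coh(\pi)_{\leq 1}$, and then splits into the two cases $\ch_{10}(E_0)=0$ (where \cite[Proposition 5.13(4)]{LZ2} puts $E_0$ in $\Coh(\pi)_{\leq 1}$) and $\ch_{10}(E_0)>0$ (where $E_0 \in C^2_{2,+}$), yielding a two-step filtration. You instead peel off the $\Coh(\pi)_{\leq 1}$-part first, WIT-decompose the horizontal remainder, and then rule out a horizontal piece of the WIT$_1$ quotient by playing $\ch_{10}\leq 0$ for $\Phi$-WIT$_1$ sheaves (\cite[Remark 5.17]{LZ2}) against $\ch_{10}>0$ for nonzero pure $2$-dimensional sheaves in $\Coh(\pi)_{\leq 1}^{\circ}$. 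The underlying mechanism (WIT torsion pair plus the sign of $\ch_{10}$) is the same, but your version trades the external citation of \cite[Lemma 2.6]{Lo7} for an inline argument, at the cost of a three-step filtration and of having to verify the purity and positivity claims about $\Coh(\pi)_{\leq 1}^{\circ}$ yourself; those claims (additivity of $\ch_{10}$, its vanishing on $\Coh(\pi)_{\leq 1}$, and its strict positivity on nonzero pure horizontal $2$-dimensional sheaves, the last being essentially \cite[Proposition 5.13(4)]{LZ2}) are indeed routine but, as you note, should be written out. One small point worth making explicit: the positivity $\ch_{10}(E''_0)>0$ follows by applying your second-paragraph argument to $E''_0$ itself (a nonzero pure $2$-dimensional subsheaf of $E''$ lying in $\Coh(\pi)_{\leq 1}^{\circ}$), or alternatively from $\ch_{10}(E''_0)=\ch_{10}(E'')-\ch_{10}(E''_1)\geq \ch_{10}(E'')>0$; as written you assert it without saying which.
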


\begin{proof}
  Take any $E \in \Coh^{\leq 2}(X)$.  Consider the $(W_{0,X}, W_{1,X})$-decomposition of $E$ in $\Coh (X)$:
  \[
  0 \to E_0 \to E \to E_1 \to 0.
  \]
  We have $E_1 \in \Coh (\pi)_{\leq 1}$ by \cite[Lemma 2.6]{Lo7}.  Since $E_0$ is a torsion sheaf, we have $\ch_{10}(E_0) \geq 0$.  Now, either  $\ch_{10}(E_0)=0$ and $E_0 \in \Coh (\pi)_{\leq 1}$ by \cite[Proposition 5.13(4)]{LZ2}, or $\ch_{10}(E_0)>0$ and $E_0 \in \scalea{\gyoung(;;*;*,;+;*;*)}$.  The lemma thus follows.
\end{proof}

\begin{lemma}\label{lemma26}
We have
\[
  \Coh (X) = \left\langle \Coh^{\leq 2}(X), \vcenter{\vbox{
  \xymatrix @-2.3pc{
   \scalea{\gyoung(;+;*;*,;+;*;*)} \\
   \scalea{\gyoung(;+;*;*,;0;*;*)} \\
   \scalea{\gyoung(;+;*;*,;-;*;*)}
   }
   }} \right\rangle.
\]
\end{lemma}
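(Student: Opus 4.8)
The plan is to run the $(W_{0,X},W_{1,X})$-decomposition of an arbitrary $E \in \Coh(X)$ and treat the two pieces separately. Write the short exact sequence $0 \to E_0 \to E \to E_1 \to 0$ with $E_0 \in W_{0,X}$ and $E_1 \in W_{1,X}$. Since the right-hand side is an extension closure, it suffices to show that $E_0 \in \langle \Coh^{\leq 2}(X), \scalea{\gyoung(;+;*;*,;+;*;*)}\rangle$ and $E_1 \in \langle \Coh^{\leq 2}(X), \scalea{\gyoung(;+;*;*,;0;*;*)}, \scalea{\gyoung(;+;*;*,;-;*;*)}\rangle$, as then $E$ is an extension of $E_1$ by $E_0$.

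The part $E_0$ is easy. Let $T_0 \subseteq E_0$ be the maximal torsion subsheaf, so $T_0 \in \Coh^{\leq 2}(X)$ and $E_0/T_0$ is torsion-free. Because $W_{0,X}$ is a torsion class in $\Coh(X)$ it is closed under quotients, so $E_0/T_0 \in W_{0,X}$; if nonzero it is torsion-free of positive rank, hence lies in $\Coh^3(\pi)_2 \cap W_{0,X} = \scalea{\gyoung(;+;*;*,;+;*;*)}$, with $\ch_{10}>0$ automatic for the same reason as in Remark \ref{remark10}. The sequence $0 \to T_0 \to E_0 \to E_0/T_0 \to 0$ then places $E_0$ in the desired subcategory.

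The substance is the part $E_1$. Let $T_1 \subseteq E_1$ be its maximal torsion subsheaf. Since $W_{1,X}$ is the torsion-free class of the torsion pair it is closed under subobjects, so $T_1 \in W_{1,X}$; combining $\ch_{10}(T_1) \geq 0$ (rank zero, \cite[Lemma 5.2]{LZ2}) with $\ch_{10}(T_1) \leq 0$ (\cite[Remark 5.17]{LZ2}) gives $\ch_{10}(T_1)=0$, while $T_1 \in \Coh^{\leq 2}(X)$. The key claim is that the torsion-free quotient $E_1/T_1$ is again $\Phi$-WIT$_1$. Granting this, if $E_1/T_1$ is nonzero it is torsion-free of positive rank with $\ch_{10}(E_1/T_1)=\ch_{10}(E_1)\leq 0$ (again \cite[Remark 5.17]{LZ2}), so it lies in $\scalea{\gyoung(;+;*;*,;0;*;*)}$ or $\scalea{\gyoung(;+;*;*,;-;*;*)}$ according as $\ch_{10}$ is zero or negative, and the sequence $0 \to T_1 \to E_1 \to E_1/T_1 \to 0$ finishes $E_1$.

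It remains to prove the key claim, and this is the main obstacle: one must control how the $\Phi$-WIT type degrades under passage to the torsion-free quotient, which a purely formal torsion-pair argument does not give, since $W_{1,X}$ is not closed under quotients. Consider the $(W_{0,X},W_{1,X})$-decomposition $0 \to A \to E_1/T_1 \to B \to 0$ and suppose the $\Phi$-WIT$_0$ part $A$ is nonzero. As a subsheaf of the torsion-free sheaf $E_1/T_1$, $A$ is torsion-free of positive rank, whence $\ch_{10}(A)>0$ by the reasoning of Remark \ref{remark10}. On the other hand, applying $\Phi$ to $0 \to T_1 \to E_1 \to E_1/T_1 \to 0$ and using $\Phi^0 T_1 = \Phi^0 E_1 = 0$, the long exact sequence identifies $\wh{A}=\Phi^0(E_1/T_1)$ with a subsheaf of $\wh{T_1}$. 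By the cohomological Fourier-Mukai formula of Section \ref{section-matrixchern}, $\ch_0(\wh{T_1}) = -\ch_{10}(T_1)=0$, so $\wh{T_1}$ is a torsion sheaf, forcing $\wh A$ to be torsion and hence $\ch_{10}(A)=\ch_0(\wh A)=0$, a contradiction. Therefore $A=0$, i.e.\ $E_1/T_1 \in W_{1,X}$, completing the proof.
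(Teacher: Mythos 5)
Your proof is correct, and its core is the same as the paper's: the $(W_{0,X},W_{1,X})$-decomposition, with rank-zero pieces absorbed into $\Coh^{\leq 2}(X)$ and positive-rank pieces sorted into $C^3_{2,+}$, $C^3_{2,0}$, $C^3_{2,-}$ by WIT type and the sign of $\ch_{10}$. The difference is in the order of operations, and it costs you an extra lemma. The paper strips off the torsion (which lies in $\Coh^{\leq 2}(X)$) \emph{first} and only then applies the WIT decomposition to the torsion-free remainder; both resulting pieces are then automatically torsion-free and the proof is three lines. You decompose by WIT first and then peel the maximal torsion subsheaf off each piece, which forces you to prove your ``key claim'' that $E_1/T_1$ is again $\Phi$-WIT$_0$-free. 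That argument ($\wh{A}\hookrightarrow\wh{T_1}$ with $\ch_0(\wh{T_1})=-\ch_{10}(T_1)=0$, versus $\ch_{10}(A)>0$ for a positive-rank WIT$_0$ sheaf) is correct, but it is work the paper never has to do. Moreover, it is avoidable even within your own route: the categories $C^3_{2,0}=\{E\in\Coh^3(\pi)_2\cap W_{1,X}:\ch_{10}(E)=0\}$ and $C^3_{2,-}$ impose no purity or torsion-freeness condition, only conditions on support dimension, WIT type and $\ch_{10}$, so $E_1$ itself (if of positive rank) already lies in one of them by \cite[Remark 5.17]{LZ2}, and likewise $E_0$ lies in $C^3_{2,+}$ without peeling off $T_0$. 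So the proposal is valid, but both torsion-peeling steps, and in particular the key claim, can be deleted.
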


\begin{proof}
Take any torsion-free sheaf $E$ on $X$, and consider its $(W_{0,X},W_{1,X})$-decomposition in $\Coh (X)$:
\[
0 \to E_0 \to E \to E_1 \to 0.
\]
If $E_0 \neq 0$ then it lies in $\scalea{\gyoung(;+;*;*,;+;*;*)}$.  On the other hand, $E_1$ either lies in $\Coh^{\leq 2}(X)$, or is supported in dimension 3 and lies in either $\scalea{\gyoung(;+;*;*,;0;*;*)}$ or $\scalea{\gyoung(;+;*;*,;-;*;*)}$.
\end{proof}

\begin{lemma}\label{lemma27}
Every category in the nested sequence \eqref{eq46} is a torsion class in $\Coh (X)$.
\end{lemma}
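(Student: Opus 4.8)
The plan is to appeal to \cite[Lemma 1.1.3]{Pol}: since $\Coh(X)$ is noetherian, a full subcategory is a torsion class exactly when it is closed under quotients and under extensions. Every category in \eqref{eq46} is by construction an extension closure, so it is automatically closed under extensions, and the whole content of the lemma is closure under quotients. The six ``full-column'' entries of the sequence are precisely the categories of \eqref{eq45}, which are already known to be torsion classes, so it remains only to treat the six intermediate categories: for $i\in\{1,3,5\}$ and $j\in\{1,2\}$, let $\mathcal{T}_{i,j}$ denote the category obtained by adjoining the first $j$ rows of column $i+1$ of Figure \ref{tetris1} to the first $i$ columns. Each $\mathcal{T}_{i,j}$ is sandwiched between two consecutive full-column torsion classes $\mathcal{T}^{(i)}\subseteq \mathcal{T}_{i,j}\subseteq \mathcal{T}^{(i+1)}$, where $\mathcal{T}^{(i+1)}=\Coh(\pi)_0,\ \Coh(\pi)_{\leq 1},\ \Coh(X)$ for $i=1,3,5$ respectively.

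The key device is an interplay with the WIT torsion pair $(W_{0,X},W_{1,X})$. If $E\twoheadrightarrow Q$ is a surjection in $\Coh(X)$ and $E_1,Q_1$ denote the $\Phi$-WIT$_1$ parts of $E,Q$ (the torsion-free parts for $(W_{0,X},W_{1,X})$), then the composite $E\twoheadrightarrow Q\twoheadrightarrow Q_1$ kills the WIT$_0$ part $E_0$ because $\Hom(W_{0,X},W_{1,X})=0$, and so factors as a \emph{surjection} $E_1\twoheadrightarrow Q_1$: the WIT$_1$ part of a quotient is a quotient of the WIT$_1$ part. With this in hand I would establish an intrinsic description
\[
  \mathcal{T}_{i,j}=\{\,E\in\mathcal{T}^{(i+1)}: E_1\in\mathcal{S}_{i,j}\,\},
\]
where $E_1$ is the WIT$_1$ part and $\mathcal{S}_{i,j}$ is an explicit class of WIT$_1$ sheaves: $\mathcal{S}_{i,1}=\mathcal{T}^{(i)}\cap W_{1,X}$, while $\mathcal{S}_{i,2}=\{F\in\mathcal{T}^{(i+1)}\cap W_{1,X}: c(F)=0\}$, with $c$ the ``critical'' Chern component separating the rows of column $i+1$, namely $c=\ch_3,\ \ch_{11},\ \ch_{10}$ for $i=1,3,5$. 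The forward inclusion reads off the generators; the reverse inclusion uses that $W_{0,X}\cap\mathcal{T}^{(i+1)}\subseteq\mathcal{T}_{i,1}$ together with the generation statements behind Figure \ref{tetris1}, plus the sign facts that every WIT$_1$ sheaf in $\mathcal{T}^{(i+1)}$ has $c\leq 0$ (from \cite[Remark 5.17]{LZ2} for $c=\ch_{10},\ch_{11}$, and from the $\mu$-slope/$\Phi$-WIT correspondence \cite[Corollary 3.29]{FMNT} for $c=\ch_3$), so that $c=0$ rules out any ``row-$3$'' subfactor.

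Granting these descriptions, quotient-closure is immediate. Given $E\twoheadrightarrow Q$ with $E\in\mathcal{T}_{i,j}$, the torsion class $\mathcal{T}^{(i+1)}$ gives $Q\in\mathcal{T}^{(i+1)}$, and by the device above $Q_1$ is a quotient of $E_1\in\mathcal{S}_{i,j}$. For $j=1$, $Q_1$ is a WIT$_1$ quotient of an object of the torsion class $\mathcal{T}^{(i)}$, hence $Q_1\in\mathcal{T}^{(i)}\cap W_{1,X}=\mathcal{S}_{i,1}$. For $j=2$, put $K=\ker(E_1\twoheadrightarrow Q_1)\subseteq E_1$; then $K$ is WIT$_1$ (subobjects of WIT$_1$ sheaves are WIT$_1$), so $c(K)\leq 0$ and $c(Q_1)\leq 0$, and additivity gives
\[
  c(Q_1)=c(E_1)-c(K)=-c(K)\geq 0,
\]
whence $c(Q_1)=0$ and $Q_1\in\mathcal{S}_{i,2}$. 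In either case $Q\in\mathcal{T}_{i,j}$, proving the lemma.

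The main obstacle is bookkeeping rather than conceptual: verifying the intrinsic descriptions of the $\mathcal{T}_{i,j}$, and in particular the reverse inclusions, where one must confirm that the WIT$_0$ part always lands in $\mathcal{T}_{i,1}$ and that the constraint $c\leq 0$ on WIT$_1$ sheaves genuinely excludes the ``$-$'' row. Once this is settled, the closure argument reduces to the single observation that the sign of one Chern component is monotone under passage to WIT$_1$ quotients; the case $i=1$ may alternatively be phrased directly through the Harder--Narasimhan theory of the fiber slope $\mu=\ch_3/H\ch_{02}$ on $\Coh(\pi)_0$, which makes that instance entirely self-contained.
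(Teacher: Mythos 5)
Your strategy is genuinely different from the paper's, and its two core devices are sound: the $\Phi$-WIT$_1$ part of a quotient is a quotient of the $\Phi$-WIT$_1$ part, and the critical Chern component is sign-constrained on $W_{0,X}$ and on $W_{1,X}$, so additivity forces it to vanish on WIT$_1$ quotients of objects on which it vanishes. The paper instead argues generator by generator: since $\langle \mathcal T, \mathcal S\rangle$ with $\mathcal T$ a torsion class is closed under quotients as soon as every quotient of every object of $\mathcal S$ lies in it, the paper only analyses quotients $E \twoheadrightarrow A$ of each newly adjoined generating class, via the exact sequence $0 \to \Phi^0 A \to \wh{K} \to \wh{E} \to \Phi^1 A \to 0$ and the decomposition of $A$ as an extension of $\wh{\Phi^1 A}$ by $\wh{\Phi^0 A}$.

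There is, however, a genuine gap, and it is not where you locate it. You assert that the forward inclusion $\mathcal T_{i,j} \subseteq \{E \in \mathcal T^{(i+1)} : E_1 \in \mathcal S_{i,j}\}$ ``reads off the generators,'' but checking generators only suffices if the right-hand class is closed under extensions, and that is not automatic: for $0 \to A \to E \to B \to 0$, the WIT$_1$ part $E_1$ is an extension of a quotient $A'$ of $A_1$ by an object $E_1/A'$ that is only a quotient of $B$ (not of $B_1$), and the WIT$_0$ part of $E_1/A'$ is a priori uncontrolled, since quotients of WIT$_1$ sheaves need not be WIT$_1$. Concretely, for $i=3, j=1$ one must rule out that an extension of a sheaf with $2$-dimensional WIT$_0$ part by a $1$-dimensional sheaf acquires a $2$-dimensional WIT$_1$ quotient; this needs an actual argument (e.g.\ transforming the preimage of $B_0$ and using that $\Coh^{\leq 1}(X) \cap W_{1,X}$ consists of fiber sheaves), and analogous checks are needed at $i=5$. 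The gap is closable with the very same sign-monotonicity device you use in the quotient step, applied now to the WIT$_0$ part of $E_1/A'$; but as written the proposal treats the hardest step of the lemma as immediate. The reverse inclusions, which you flag as the main obstacle, are also nontrivial but amount essentially to the generation statements of Lemmas \ref{lemma24}--\ref{lemma26}, which are available.
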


\begin{proof}
By \cite[Lemma 1.1.3]{Pol}, if $\mathcal C$ is an extension-closed  subcategory of $\Coh (X)$, then it is a torsion class if and only if it is closed under quotient in $\Coh (X)$.We prove the lemma step by step:

(i) To begin with, $\scalea{\gyoung(;;;,;;;+)
}=\Coh^{\leq 0}(X)$ is clearly closed under quotient in $\Coh (X)$, and so is a torsion class in $\Coh (X)$.  To see that $\scalebox{0.5}{
\xymatrix @-2.5pc{
\ysize\gyoung(;;;,;;;+) & \ysize\gyoung(;;;+,;;;+)
}
}$ (resp.\ $\scalebox{0.5}{
\xymatrix @-2.5pc{
\ysize\gyoung(;;;,;;;+) & \ysize\gyoung(;;;+,;;;+) \\
& \ysize\gyoung(;;;+,;;;0)
}
}$) is closed under quotient in $\Coh (X)$, observe that   it can be written as the extension closure of all fiber sheaves where all the $\mu$-HN factors have $\mu >0$ (resp.\ $\mu \geq 0$) if we set $\mu = \ch_3/D\ch_2$.  Next, $\scalebox{0.5}{
\xymatrix @-2.5pc{
\ysize\gyoung(;;;,;;;+) & \ysize\gyoung(;;;+,;;;+) \\
& \ysize\gyoung(;;;+,;;;0) \\
& \ysize\gyoung(;;;+,;;;-)
}
}=\Coh (\pi)_0$ is the category of all fiber sheaves, and so is closed under quotient in $\Coh (X)$.  Also, $\scalebox{0.5}{
\xymatrix @-2.5pc{
\ysize\gyoung(;;;,;;;+) & \ysize\gyoung(;;;+,;;;+) & \ysize\gyoung(;;;*,;;+;*)\\
& \ysize\gyoung(;;;+,;;;0) \\
& \ysize\gyoung(;;;+,;;;-)
}
}=\Coh^{\leq 1}(X)$ from Section \ref{sec-generation}, and so is closed under quotient in $\Coh (X)$.

(ii) That $\scalebox{0.5}{
\xymatrix @-2.5pc{
\ysize\gyoung(;;;,;;;+) & \ysize\gyoung(;;;+,;;;+) & \ysize\gyoung(;;;*,;;+;*) & \ysize\gyoung(;;+;*,;;+;*) \\
& \ysize\gyoung(;;;+,;;;0) & &  \\
& \ysize\gyoung(;;;+,;;;-) & &
}
}=\langle \Coh^{\leq 1}(X), \scalea{
\gyoung(;;+;*,;;+;*)
}\rangle$ is closed under quotient in $\Coh (X)$ is also easy to see: any $\Coh(X)$-quotient of an object in $\scalea{\gyoung(;;+;*,;;+;*)}$ either lies in $\Coh^2(\pi)_1$ (in which case it lies in $\scalea{\gyoung(;;+;*,;;+;*)}$) or lies in $\Coh^{\leq 1}(X)$.

(iii) To see that $\scalebox{0.5}{
\xymatrix @-2.5pc{
\ysize\gyoung(;;;,;;;+) & \ysize\gyoung(;;;+,;;;+) & \ysize\gyoung(;;;*,;;+;*) & \ysize\gyoung(;;+;*,;;+;*) \\
& \ysize\gyoung(;;;+,;;;0) & & \ysize\gyoung(;;+;*,;;0;*) \\
& \ysize\gyoung(;;;+,;;;-) & &
}
}$ is closed under quotient in $\Coh (X)$, take any object $E \in \scalea{
\gyoung(;;+;*,;;0;*)}$ and consider any $\Coh (X)$-quotient $E \twoheadrightarrow A$.  Let $K$ be the kernel of this surjection, and the $\Coh (X)$-short exact sequence yields the exact sequence of sheaves
\[
0 \to \Phi^0A \to \wh{K} \to \wh{E} \to \Phi^1A \to 0.
\]
Since $\wh{E}$ lies in $\scalea{\gyoung(;;;*,;;+;*)}$, its quotient sheaf $\Phi^1 A$  lies in either $\scalea{\gyoung(;;;*,;;+;*)}$ or $\Coh (\pi)_0$.  Hence $\wh{\Phi^1 A}$  lies in either $\scalea{\gyoung(;;+;*,;;0;*)}$ or $\Coh (\pi)_0$.  On the other hand,  $\wh{\Phi^0A}$ is a $\Phi$-WIT$_0$ sheaf lying in $\Coh (\pi)_{\leq 1}$, and so $\wh{\Phi^0A} \in \langle \Coh^{\leq 1}(X), \scalea{\gyoung(;;+;*,;;+;*)}\rangle$.  Since $A$ is the extension of $\wh{\Phi^1A}$ by $\wh{\Phi^0A}$, we have $A \in \scalebox{0.5}{
\xymatrix @-2.5pc{
\ysize\gyoung(;;;,;;;+) & \ysize\gyoung(;;;+,;;;+) & \ysize\gyoung(;;;*,;;+;*) & \ysize\gyoung(;;+;*,;;+;*) \\
& \ysize\gyoung(;;;+,;;;0) & & \ysize\gyoung(;;+;*,;;0;*) \\
& \ysize\gyoung(;;;+,;;;-) & &
}
}$.

(iv)  $\scalebox{0.5}{
\xymatrix @-2.5pc{
\ysize\gyoung(;;;,;;;+) & \ysize\gyoung(;;;+,;;;+) & \ysize\gyoung(;;;*,;;+;*) & \ysize\gyoung(;;+;*,;;+;*) \\
& \ysize\gyoung(;;;+,;;;0) & & \ysize\gyoung(;;+;*,;;0;*) \\
& \ysize\gyoung(;;;+,;;;-) & & \ysize\gyoung(;;+;*,;;-;*)
}
}=\Coh (\pi)_{\leq 1}$ by Lemma \ref{lemma24}, which is  closed under quotient in $\Coh (X)$.

(v) $\scalebox{0.5}{
\xymatrix @-2.5pc{
\ysize\gyoung(;;;,;;;+) & \ysize\gyoung(;;;+,;;;+) & \ysize\gyoung(;;;*,;;+;*) & \ysize\gyoung(;;+;*,;;+;*) &   \ysize\gyoung(;;*;*,;+;*;*)\\
& \ysize\gyoung(;;;+,;;;0) & & \ysize\gyoung(;;+;*,;;0;*) & \\
& \ysize\gyoung(;;;+,;;;-) & & \ysize\gyoung(;;+;*,;;-;*) &
}
}=\Coh^{\leq 2}(X)$ from Section \ref{sec-generation}, and so is closed under quotient in $\Coh (X)$.

(vi) The proof that $\scalebox{0.5}{
\xymatrix @-2.5pc{
\ysize\gyoung(;;;,;;;+) & \ysize\gyoung(;;;+,;;;+) & \ysize\gyoung(;;;*,;;+;*) & \ysize\gyoung(;;+;*,;;+;*) & \ysize\gyoung(;;*;*,;+;*;*) & \ysize\gyoung(;+;*;*,;+;*;*)\\
& \ysize\gyoung(;;;+,;;;0) & & \ysize\gyoung(;;+;*,;;0;*) & & \\
& \ysize\gyoung(;;;+,;;;-) & & \ysize\gyoung(;;+;*,;;-;*) & &
}
}=\langle \Coh^{\leq 2}(X), \scalea{\gyoung(+;*;*,;+;*;*)}\rangle$ is closed under quotient in $\Coh (X)$ is similar to that in step (ii).

(vii) The proof that $\scalebox{0.5}{
\xymatrix @-2.5pc{
\ysize\gyoung(;;;,;;;+) & \ysize\gyoung(;;;+,;;;+) & \ysize\gyoung(;;;*,;;+;*) & \ysize\gyoung(;;+;*,;;+;*) & \ysize\gyoung(;;*;*,;+;*;*) & \ysize\gyoung(;+;*;*,;+;*;*)\\
& \ysize\gyoung(;;;+,;;;0) & & \ysize\gyoung(;;+;*,;;0;*) & & \ysize\gyoung(;+;*;*,;0;*;*)\\
& \ysize\gyoung(;;;+,;;;-) & & \ysize\gyoung(;;+;*,;;-;*) & &
}
}$ is closed under quotient in $\Coh (X)$ is similar to that in step (iii).
\end{proof}

\begin{lemma}\label{lemma28}
Every category in the following nested sequence is a torsion class in $\Coh (X)$:
\begin{equation}\label{eq48}
\scalebox{0.5}{\xymatrix @-2.5pc{
\ysize{\gyoung(;;;,;;;+)} & \ysize{\gyoung(;;;+,;;;+)} & \ysize{\gyoung(;;;*,;;+;*)} & \ysize{\gyoung(;;+;*,;;+;*)}  \\
& \ysize{\gyoung(;;;+,;;;0)} & &  \\
& \ysize{\gyoung(;;;+,;;;-)} & &
}
}
\subset
\scalebox{0.5}{\xymatrix @-2.5pc{
\ysize{\gyoung(;;;,;;;+)} & \ysize{\gyoung(;;;+,;;;+)} & \ysize{\gyoung(;;;*,;;+;*)} & \ysize{\gyoung(;;+;*,;;+;*)} & \ysize{\gyoung(;;*;*,;+;*;*)}  \\
& \ysize{\gyoung(;;;+,;;;0)} & &  &  \\
& \ysize{\gyoung(;;;+,;;;-)} & & &
}
}
\subset
\scalebox{0.5}{\xymatrix @-2.5pc{
\ysize{\gyoung(;;;,;;;+)} & \ysize{\gyoung(;;;+,;;;+)} & \ysize{\gyoung(;;;*,;;+;*)} & \ysize{\gyoung(;;+;*,;;+;*)} & \ysize{\gyoung(;;*;*,;+;*;*)} \\
& \ysize{\gyoung(;;;+,;;;0)} & & \ysize{\gyoung(;;+;*,;;0;*)}  \\
& \ysize{\gyoung(;;;+,;;;-)} & &
}
}
\subset
\scalebox{0.5}{\xymatrix @-2.5pc{
\ysize{\gyoung(;;;,;;;+)} & \ysize{\gyoung(;;;+,;;;+)} & \ysize{\gyoung(;;;*,;;+;*)} & \ysize{\gyoung(;;+;*,;;+;*)} & \ysize{\gyoung(;;*;*,;+;*;*)} &  \ysize{\gyoung(;+;*;*,;+;*;*)}  \\
& \ysize{\gyoung(;;;+,;;;0)} & & \ysize{\gyoung(;;+;*,;;0;*)} & & \\
& \ysize{\gyoung(;;;+,;;;-)} & & & &
}
}.
\end{equation}
\end{lemma}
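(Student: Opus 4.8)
The plan is to apply \cite[Lemma 1.1.3]{Pol}: each of the four categories in \eqref{eq48} is by construction an extension closure, hence extension-closed, so it is a torsion class in $\Coh(X)$ precisely when it is closed under quotients. I would first isolate the elementary reduction that already powers Lemma \ref{lemma27}: if $\mathcal T$ is a torsion class in $\Coh(X)$ and $\mathcal G$ is any class of objects, then $\langle \mathcal T, \mathcal G\rangle$ is closed under quotients as soon as every $\Coh(X)$-quotient of an object of $\mathcal G$ lies in $\langle \mathcal T,\mathcal G\rangle$; indeed, given a quotient $A$ of $E\in\langle\mathcal T,\mathcal G\rangle$, one inducts on the length of a filtration of $E$, at each stage splitting off the image in $A$ of a $\mathcal T$- or $\mathcal G$-subquotient (a quotient of that subquotient, hence again in $\langle\mathcal T,\mathcal G\rangle$ by hypothesis and by $\mathcal T$ being quotient-closed). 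Reading \eqref{eq48} from left to right, each category $\mathcal C_k$ is obtained from the preceding one $\mathcal C_{k-1}$—a torsion class by the inductive hypothesis—by adjoining a single new generator, so at each step it suffices to check quotients of that one generator. The first category $\mathcal C_1=\langle\Coh^{\leq 1}(X),C^2_{1,+}\rangle$ is literally the one treated in Lemma \ref{lemma27}(ii), so no new work is required there.

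For the three further steps I would argue as follows, in each case invoking that $W_{0,X}$ is closed under quotients. At $\mathcal C_2=\langle\mathcal C_1,C^2_{2,+}\rangle$, a quotient $A$ of an object of $C^2_{2,+}\subseteq W_{0,X}$ is again $\Phi$-WIT$_0$ and lies in $\Coh^{\leq 2}(X)$, so the step reduces to the sub-lemma $W_{0,X}\cap\Coh^{\leq 2}(X)\subseteq\mathcal C_2$ isolated below. At $\mathcal C_3=\langle\mathcal C_2,C^2_{1,0}\rangle$, the computation in Lemma \ref{lemma27}(iii) shows any quotient of an object of $C^2_{1,0}$ lands in $\langle\Coh^{\leq 1}(X),C^2_{1,+},C^2_{1,0}\rangle$, which is contained in $\mathcal C_3$. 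At $\mathcal C_4=\langle\mathcal C_3,C^3_{2,+}\rangle$—the category $\mathcal E$ of Lemma \ref{lemma17}—a quotient $A$ of an object of $C^3_{2,+}\subseteq W_{0,X}$ is $\Phi$-WIT$_0$: if $\dim A=3$ then $A\in\Coh^3(\pi)_2\cap W_{0,X}$, so the argument of Remark \ref{remark10} forces $\ch_{10}(A)>0$ and $A\in C^3_{2,+}$, while if $\dim A\leq 2$ then $A\in W_{0,X}\cap\Coh^{\leq 2}(X)$ and the sub-lemma places $A$ in $\mathcal C_2\subseteq\mathcal C_3$.

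Thus everything comes down to the sub-lemma $W_{0,X}\cap\Coh^{\leq 2}(X)\subseteq\mathcal C_2=\langle\Coh^{\leq 1}(X),C^2_{1,+},C^2_{2,+}\rangle$, which I expect to be the main obstacle. The naive attempt—peel off the maximal $\Coh(\pi)_{\leq 1}$-subsheaf of such an $A$—breaks down, since a subsheaf of a $\Phi$-WIT$_0$ sheaf need not be $\Phi$-WIT$_0$, and $\Coh(\pi)_{\leq 1}$ contains the $\Phi$-WIT$_1$ categories $C^2_{1,0},C^2_{1,-}$ that are absent from $\mathcal C_2$. I would instead pass to the transform $\wh A=\Phi^0 A$, which is $\Phi$-WIT$_1$ (since $\Phi^2=\mathrm{id}_X[-1]$) with $\ch_{10}(\wh A)=0$ (a one-line matrix computation, using that $A$ is torsion). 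Decompose $\wh A$ by its torsion subsheaf $\tau$: then $\tau$ is $\Phi$-WIT$_1$ because $W_{1,X}$ is closed under subobjects, and combining $\ch_{10}(\tau)\geq 0$ (torsion) with $\ch_{10}(\tau)\leq 0$ \cite[Remark 5.17]{LZ2} gives $\ch_{10}(\tau)=0$, so $\tau\in\Coh(\pi)_{\leq 1}\cap W_{1,X}$ by \cite[Proposition 5.13]{LZ2}; the torsion-free quotient $\wh A/\tau$ stays $\Phi$-WIT$_1$ (any $\Phi$-WIT$_0$ subsheaf of it would transform into $\Coh(\pi)_{\leq 1}$, hence be torsion, contradicting torsion-freeness) and so lies in $C^3_{2,0}$. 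Applying the equivalence $\Phi(-)[1]\colon W_{1,X}\to W_{0,X}$ to $0\to\tau\to\wh A\to\wh A/\tau\to 0$ and reading off Figure \ref{tetris1} sends $\wh A/\tau$ into $C^2_{2,+}$ and $\tau$ into $\Coh(\pi)_{\leq 1}\cap W_{0,X}$; the latter sits inside $\langle\Coh^{\leq 1}(X),C^2_{1,+}\rangle$ by the harmless observation that the quotient of any $B\in\Coh(\pi)_{\leq 1}\cap W_{0,X}$ by its maximal $\Coh^{\leq 1}(X)$-subsheaf is a $\Phi$-WIT$_0$ sheaf in $\Coh^2(\pi)_1$, hence in $C^2_{1,+}$ (peeling a $\Coh^{\leq 1}(X)$-subsheaf being harmless because $\Coh^{\leq 1}(X)\subseteq\mathcal C_2$ regardless of $\Phi$-WIT type). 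Since the transform of $\wh A$ is $A$ itself, this exhibits $A$ as an extension of an object of $C^2_{2,+}$ by an object of $\langle\Coh^{\leq 1}(X),C^2_{1,+}\rangle$, so $A\in\mathcal C_2$, which closes the sub-lemma and hence the whole proof.
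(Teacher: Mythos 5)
Your proof is correct, and its skeleton---proceed along the chain, and at each stage invoke \cite[Lemma 1.1.3]{Pol} by checking only that $\Coh(X)$-quotients of the single newly adjoined generator stay inside---is exactly the paper's; your treatments of the first, third and fourth inclusions coincide with the paper's almost verbatim. The one place you genuinely diverge is the containment $W_{0,X}\cap\Coh^{\leq 2}(X)\subseteq \mathcal{C}_2=\langle \Coh^{\leq 1}(X), C^2_{1,+}, C^2_{2,+}\rangle$, which the paper does not isolate as a sub-lemma: it disposes of quotients of $C^2_{2,+}$ by the direct dichotomy already used in the proof of Lemma \ref{lemma25} (a $\Phi$-WIT$_0$ torsion sheaf with $\ch_{10}>0$ lies in $C^2_{2,+}$ by definition, while one with $\ch_{10}=0$ lies in $\Coh(\pi)_{\leq 1}\cap W_{0,X}$, which is then filtered as in the proof of Lemma \ref{lemma24}). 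You instead pass to the transform $\wh{A}$, split off its maximal torsion subsheaf, and transform back; this is slightly longer in the case $\ch_{10}(A)>0$, but it yields a single uniform argument that you can reuse unchanged at the $C^3_{2,+}$ step, where the paper again silently invokes the same containment (``lies in the third term''). Both routes rest on the same two facts---quotients of $\Phi$-WIT$_0$ sheaves are $\Phi$-WIT$_0$, and a $\Phi$-WIT$_0$ torsion sheaf with $\ch_{10}=0$ lives in $\Coh(\pi)_{\leq 1}$---so nothing essential is gained or lost; your write-up simply makes explicit the step the paper compresses into ``an argument similar to that in step (ii).''
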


\begin{proof}
We already know that the first term in \eqref{eq48} is a torsion class in $\Coh (X)$ from  Lemma \ref{lemma27}.  The proof that the second term in \eqref{eq48} is a torsion class in $\Coh (X)$ follows an argument similar to that in step (ii) in the proof of Lemma \ref{lemma27}.  That the third  term in \eqref{eq48} is also a torsion class in $\Coh (X)$ follows from $\scalebox{0.5}{\xymatrix @-2.5pc{
\ysize{\gyoung(;;;,;;;+)} & \ysize{\gyoung(;;;+,;;;+)} & \ysize{\gyoung(;;;*,;;+;*)} & \ysize{\gyoung(;;+;*,;;+;*)}  \\
& \ysize{\gyoung(;;;+,;;;0)} & & \ysize{\gyoung(;;+;*,;;0;*)}  \\
& \ysize{\gyoung(;;;+,;;;-)} & &
}
}$ being a torsion class (Lemma \ref{lemma27}), and that any sheaf quotient of an object in $\scalea{\gyoung(;;*;*,;+;*;*)}$ again lies in $\scalebox{0.5}{\xymatrix @-2.5pc{
\ysize{\gyoung(;;;,;;;+)} & \ysize{\gyoung(;;;+,;;;+)} & \ysize{\gyoung(;;;*,;;+;*)} & \ysize{\gyoung(;;+;*,;;+;*)} & \ysize{\gyoung(;;*;*,;+;*;*)}  \\
& \ysize{\gyoung(;;;+,;;;0)} & &  &  \\
& \ysize{\gyoung(;;;+,;;;-)} & & &
}
}$ (from the previous sentence).

To see that the last term in \eqref{eq48} is a torsion class, again take any surjection of sheaves $A \twoheadrightarrow A'$ with $A \in \scalea{\gyoung(;+;*;*,;+;*;*)}$.  If $\rk (A')>0$, then $A' \in \scalea{\gyoung(;+;*;*,;+;*;*)}$; if $\rk (A')=0$, then $A'$ is still $\Phi$-WIT$_0$, and so lies in the third term in \eqref{eq48}. Thus the last term in \eqref{eq48} is also a torsion class in $\Coh (X)$.
\end{proof}

\begin{remark}\label{remark8}
The last part of the proof of Lemma \ref{lemma28} shows $W_{0,X} \subset \mathcal A_{1/2,0}$.  Since $\Fl [1]$ is entirely contained in $\mathcal A_{1/2,0}$, any object $E$ in the torsion-free class $\mathcal A_{1/2,0}^\circ$ (taken in $\Bl$) satisfies $H^{-1}(E)=0$ and $H^0(E) \in W_{1,X}$.  Hence
\[
  \mathcal A_{1/2,0}^\circ \subset W_{1,X} \cap \Tl
\]
where the category $W_{1,X} \cap \Tl$ is described in Lemma \ref{lemma7}.
\end{remark}

\bibliography{refs}{}
\bibliographystyle{plain}

\end{document}